\newtheorem{theorem}{Theorem}[section]
\newtheorem{lemma}[theorem]{Lemma}
\newtheorem{thm}[theorem]{Theorem}
\newtheorem{prop}[theorem]{Proposition}
\newtheorem{cor}[theorem]{Corollary}
\theoremstyle{definition}
\theoremstyle{remark}
\numberwithin{equation}{section}
\def\mod{{\rm mod\;}}
\def\Mod{{\rm Mod}}
\def\Rep{{\rm Rep}}
\def\supp{{\rm supp\,}}
\def\im{{\rm im\,}}
\def\TN{\textsc{tn}}
\def\FL{\textsc{fl}}
\def\AC{\textsc{ac}}
\def\TC{\textsc{tc}}
\def\CU{\textsc{c}}
\def\EU{\textsc{e}}
\def\CE{\textsc{ce}}
\def\CTF{\textsc{ctf}}
\def\sp{\hspace{2ex}}
\begin{document}

\title[Dual Complementary polynomials and Tutte polynomial]
{Dual complementary polynomials of graphs and a
combinatorial-geometric interpretation on the values of the Tutte
polynomial at positive integers}

\author{Beifang Chen}
\address{Department of Mathematics,
Hong Kong University of Science and Technology,
Clear Water Bay, Kowloon, Hong Kong}
\curraddr{}

\email{mabfchen@ust.hk}
\thanks{Research is supported by RGC Competitive Earmarked Research Grants 600608, 600409, and 600811.}


\subjclass[2000]{05A99, 05C20, 05C99, 52B40, 52C99}
\date{28 October 2005}


\keywords{Orientations, tension-flows, cut equivalence, Eulerian
equivalence, cut-Eulerian equivalence, complementary tension-flows,
complementary polyhedron, complementary polytope, complementary
polynomials, dual complementary polynomials, interpretation of Tutte
polynomial}

\begin{abstract}
We introduce modular (integral) complementary polynomial $\kappa$
($\kappa_{\Bbb Z}$) of two variables on a graph $G$ by counting the
number of modular (integral) complementary tension-flows. We further
introduce cut-Eulerian equivalence relation on orientations and
geometric structures: complementary open lattice polyhedron
$\Delta_\CTF$, 0-1 polytope $\Delta^+_\CTF$, and lattice polytopes
$\Delta^\rho_\CTF$ with respect to orientations $\rho$. The
polynomial $\kappa$ ($\kappa_{\Bbb Z}$) is a common generalization
of the modular (integral) tension polynomial $\tau$ ($\tau_{\Bbb
Z}$) and the modular (integral) flow polynomial $\varphi$
($\varphi_\Bbb{Z}$) of one variable, and can be decomposed into a
sum of product Ehrhart polynomials of complementary open 0-1
polytopes. There are dual complementary polynomials $\bar\kappa$ and
$\bar\kappa_{\Bbb Z}$, dual to $\kappa$ and $\kappa_{\Bbb Z}$
respectively, in the sense that the lattice-point counting to the
Ehrhart polynomials is taken inside a topological sum of the dilated
closed polytopes $\bar\Delta^+_\CTF$. It turns out remarkably that
$\bar\kappa$ is Whitney's rank generating polynomial $R_G$, which
gives rise to a nontrivial combinatorial-geometric interpretation on
the values of the Tutte polynomial $T_G$ at all positive integers.
In particular, some special values of $\kappa_{\Bbb Z}$ and
$\bar\kappa_{\Bbb Z}$ ($\kappa$ and $\bar\kappa$) count the number
of certain special kinds (of equivalence classes) of orientations,
including the recovery of a few well-known values of $T_G$.
\end{abstract}

\maketitle

\section{Introduction}

The Tutte polynomial $T_G(x,y)$ of a graph $G$ is a common
generalization of the chromatic polynomial $\chi(G,t)$ and the flow
polynomial $\varphi(G,t)$, and is one of the most important
polynomials in graph theory. Unlike definitions of $\chi$ by
counting proper colorings and of $\varphi$ by counting nowhere-zero
flows, $T_G$ is defined by Whitney's rank generating polynomial
$R_G(x,y)$, rather than by counting certain combinatorial objects;
see \cite[p.339]{Bollobas1} and \cite[p.45]{Welsh1}. It has been
wondered for a long time if there exists a counting style definition
for $T_G$. In fact, the combinatorial meanings of $T_G$ at a few
special values, such as $T_G(i,j)$ with $1\leq i,j\leq 2$, can be
read out directly from $R_G$; see Theorem 5 in
\cite[p.345]{Bollobas1}. However, finding combinatorial/geometric
interpretations on the values of $T_G$ at integers has been
continually an active research since Tutte \cite{Tutte}. The
classical interpretations of $T_G$ at a family of special integers
were made by Tutte (see, for example, \cite{Chen-I,Chen-II}) as
follows:
\begin{align}
\tau(G,t)&=(-1)^{r(G)}T_G(1-t,0), \label{Tau-Tutte}\\
\varphi(G,t)&=(-1)^{n(G)}T_G(0,1-t), \label{Phi-Tutte}
\end{align}
where $\tau(G,t)$ ($=\chi(G,t)/t^{c(G)}$) is the tension polynomial
of $G$ and $c(G)$ is the number of connected components. Several
other combinatorial interpretations were made from various
viewpoints: Crapo and Rota's finite field interpretation of
$|T_M(1-q^k,0)|$ on a matroid $M$ \cite{Crapo-Rota1}; Stanley's
interpretation of $|\chi(G,-t)|=t^{c(G)}|T_G(1+t,0)|$ with $t\geq 1$
\cite{Stanley1} and its dual version on $|\varphi(G,-1)|$ by Green
and Zaslavsky \cite{Greene-Zaslvsky1}; Greene's interpretation as
the weight enumerator of linear codes \cite{Greene} and its
generalization by Barg \cite{Barg} and by Green and Zaslavsky
\cite{Greene-Zaslvsky1}; Jaeger's interpretation of linear code and
dual code words \cite{Jaeger}; Brylawksi and Oxley's two-variable
coloring formula \cite{Brylawski-Oxley}, etc.

More recently, Kook, Reiner, and Stanton \cite{Kook-Reiner-Stanton}
found a convolution formula on the Tutte polynomial of a matroid
$M$:
\begin{equation}\label{Conv-Formula-Tutte-M}
T_M(x,y)=\sum_{X\subseteq M} T_{M/X}(x,0)\,T_{M|X}(0,y),
\end{equation}
which was used by Reiner \cite{Reiner1} to give interpretations of
$T_M$ and typically of $T_G$ at nonpositive integers or other
co-related numbers. Kochol \cite{Kochol1,Kochol2} introduced
integral tension polynomial $\tau_{\Bbb Z}(G,t)$ and integral flow
polynomial $\varphi_{\Bbb Z}(G,t)$, which are closely related to
$\tau$ and $\varphi$, and these polynomials led him to define
integral and modular tension-flow polynomials in \cite{Kochol3}.
Gioan \cite{Gioan} gave combinatorial interpretations of $T_G$ at
the special integers $(i,j)$ with $1\leq i,j\leq 2$, using
cycle-cocycle reversing systems. And the very recent work of Chang,
Ma, and Yeh \cite{Chang-Ma-Yeh1} on a new expression of $T_G$, using
graph parking functions.

In the present paper, we study systematically the complementary
tension-flows (CTF) of a graph $G$ and introduce dual complementary
polynomials. Fix an orientation $\varepsilon$ (see Section~2) on $G$
to have a digraph $(G,\varepsilon)$ throughout. Each function
$h\in{\Bbb R}^E$ is decomposed automatically and uniquely into
$h=f+g$, where $f$ is a tension and $g$ is a flow of
$(G,\varepsilon)$; the ordered pair $(f,g)$ is known as a {\em
tension-flow} of $(G,\varepsilon)$. We consider those functions
$h=f+g\in{\Bbb R}^E$ whose tension-flows $(f,g)$ satisfy the
so-called {\em complementary condition:}
\begin{equation}
f(e)g(e)=0,\quad f(e)+g(e)\neq 0 \quad \mbox{for all $e\in E$.}
\end{equation}
A tension-flow $(f,g)$ is said to be a {\em $(p,q)$-tension-flow},
where $p,q$ are positive integers, if $|f(e)|<p$ and $|g(e)|<q$ for
all $e\in E$. We denote by $K(G,\varepsilon)$ the space of all
complementary tension-flows of $(G,\varepsilon)$, and by
$K(G,\varepsilon;p,q)$ the space of all complementary
$(p,q)$-tension-flows. The complementary condition is motivated by
the work of Reiner \cite{Reiner1}.

Let $\Delta_\CTF(G,\varepsilon)$ denote the relatively open lattice
polyhedron of all complementary $(1,1)$-tension-flows of
$(G,\varepsilon)$, and $\Delta^+_\CTF(G,\varepsilon)$ the relatively
open 0-1 polytope of all nonnegative complementary
$(1,1)$-tension-flows. For each orientation $\rho$ on $G$, let
$\Delta^\rho_\CTF(G,\varepsilon)$ denote the relatively open lattice
polytope of complementary $(1,1)$-tension-flows $(f,g)$ of
$(G,\varepsilon)$ such that $f(e)+g(e)>0$ if
$\rho(v,e)=\varepsilon(v,e)$ and $f(e)+g(e)<0$ if
$\rho(v,e)\neq\varepsilon(v,e)$ for each edge $e$ at its one
endvertex $v$. We shall see that $\Delta_\CTF(G,\varepsilon)$ is a
disjoint union of $\Delta^\rho_\CTF(G,\varepsilon)$, where $\rho$ is
extended over all orientations on $G$. Each polytope
$\Delta^\rho_\CTF(G,\varepsilon)$ is lattice isomorphic to the 0-1
polytope $\Delta^+_\CTF(G,\rho)$, and can be decomposed into a
product
\begin{equation}
\Delta^+_\CTF(G,\rho) = \Delta_\TN^+(G,B_\rho) \times
\Delta_\FL^+(G,C_\rho),
\end{equation}
where $C_\rho$ is the maximal strong subdigraph of $(G,\rho)$,
$B_\rho$ is the subdigraph induced by the edge set $E-E(C_\rho)$,
$\Delta_\TN^+(G,B_\rho)$ is the relatively open 0-1 polytope
consisting of 1-tensions $f$ of $(G,\rho)$ such that $f|_{B_\rho}>0$
and $f|_{C_\rho}=0$, and $\Delta_\FL^+(G,C_\rho)$ is the relatively
open 0-1 polytope consisting of 1-flows $g$ of $(G,\rho)$ such that
$g|_{B_\rho}=0$ and $g|_{C_\rho}>0$.

Let ${\mathcal O}(G)$ denote the set of all orientations on $G$. The
key ingredient of the paper is to view the closure
$\bar\Delta^\rho_\CTF(G,\varepsilon)$ as a dual of
$\Delta^\rho_\CTF(G,\varepsilon)$, and to view the topological sum
\begin{equation}
\widetilde{\Delta}_\CTF(G,\varepsilon):=\sum_{\rho\in{\mathcal
O}(G)} \bar\Delta^\rho_\CTF(G,\varepsilon)\; \mbox{(union of
disjoint copies)}
\end{equation}
as a dual to the non-convex polyhedron $\Delta_\CTF(G,\varepsilon)$.
We apply the Ehrhart theory to the above lattice polyhedron and
lattice polytopes.

For positive integers $p,q$, let $(p,q)\Delta_\CTF(G,\varepsilon)$
denote the dilation of $\Delta_\CTF(G,\varepsilon)$ in two
independent parameters, consisting of tension-flows $(pf,qg)$ with
$(f,g)\in\Delta_\CTF(G,\varepsilon)$. Then
$K(G,\varepsilon;p,q)=(p,q)\Delta_\CTF(G,\varepsilon)$. We define
the polynomial counting functions
\begin{align}
\kappa_{\Bbb Z}(G;p,q) & =\big|(p,q)\Delta_\CTF(G,\varepsilon) \cap
({\Bbb Z}\times{\Bbb Z})^E\big|, \label{kappa-Z-defn}\\
\kappa_\varepsilon(G;p,q) & =\big|(p,q)\Delta^+_\CTF(G,\varepsilon)
\cap ({\Bbb Z}\times{\Bbb Z})^E\big|.
\end{align}

For nonnegative integers $p,q$, the dilation
$(p,q)\bar\Delta^\rho_\CTF(G,\varepsilon)$ consists of tension-flows
$(pf,qg)$ with $(f,g)\in\bar\Delta^\rho_\CTF(G,\varepsilon)$. Then
$(p,q)\widetilde\Delta_\CTF(G,\varepsilon)$ is a topological sum of
$(p,q)\bar\Delta^\rho_\CTF(G,\varepsilon)$ extended over
$\rho\in{\mathcal O}(G)$. We define the dual polynomial counting
functions
\begin{align}
\bar\kappa_{\Bbb Z}(G;p,q) & =
\big|(p,q)\widetilde\Delta_\CTF(G,\varepsilon)
\cap ({\Bbb Z}\times{\Bbb Z})^E\big|,\\
\bar\kappa_\varepsilon(G;p,q) & =
\big|(p,q)\bar\Delta^+_\CTF(G,\varepsilon) \cap ({\Bbb Z}\times{\Bbb
Z})^E\big|.
\end{align}
Then $\kappa_{\Bbb Z}(G;,p,q)$ ($\kappa_\varepsilon(G;p,q)$) counts
the number of (nonnegative) integer-valued complementary
$(p,q)$-tension-flows of $(G,\varepsilon)$;
$\bar\kappa_\varepsilon(G;p,q)$ counts the number of integer-valued
tension-flows $(f,g)$ of $(G,\varepsilon)$ such that $0\leq f(e)\leq
p$ and $0\leq g(e)\leq q$ for all $e\in E$; and
\begin{equation}
\bar\kappa_{\Bbb Z}(G;p,q) =\sum_{\rho\in\mathcal{O}(G)}
\bar\kappa_\rho(G;p,q).
\end{equation}
We call $\kappa_{\Bbb Z}$ ($\kappa_\varepsilon$) the {\em integral
(local) complementary polynomial} of $G$ {\em with respect to
$\varepsilon$}, and $\bar\kappa_{\Bbb Z}$ ($\bar\kappa_\varepsilon$)
the {\em dual integral (local) complementary polynomial}.

There is a unimodular isomorphism between $\Delta^+_\CTF(G,\rho)$
and $\Delta^+_\CTF(G,\sigma)$, whenever $\rho,\sigma$ differ exactly
on an edge-disjoint union of a locally directed cut and a directed
Eulerian subgraph, said to be {\em cut-Eulerian equivalent}, denoted
$\rho\sim_\CE\sigma$. Indeed, the cut-Eulerian equivalence is an
equivalence relation on $\mathcal{O}(G)$. Moreover,
\[
\kappa_\rho(G;x,y)=\kappa_\sigma(G;x,y) \quad \mbox{if} \quad
\rho\sim_\CE\sigma.
\]
Let $[\mathcal{O}(G)]$ denote the set of cut-Eulerian equivalence
classes $[\rho]$, where $\rho\in\mathcal{O}(G)$. We introduce the
polynomial counting function
\begin{align}
\bar\kappa(G;p,q):&=
\sum_{[\rho]\in[\mathcal{O}(G)]}\bar\kappa_\rho(G;p,q).
\end{align}
It turns out remarkably that $\bar\kappa$ is the same as the rank
generating
polynomial $R_G$. 

Let $A,B$ be abelian groups of orders $|A|=p$, $|B|=q$. We define
the polynomial counting function
\begin{equation}\label{kappa-defn}
\kappa(G,\varepsilon;p,q)=|K(G,\varepsilon;A,B)|,
\end{equation}
where $K(G,\varepsilon;A,B)$ is the set of ordered pairs $(f,g)$
such that $f$ is an $A$-tension and $g$ is a $B$-flow of
$(G,\varepsilon)$, and $f(e)=0$ if and only if $g(e)\neq 0$ for all
$e\in E$. We call $\kappa$ ($\bar\kappa$) the {\em (dual) modular
complementary polynomial} of $G$. We summarize our main results as
the following theorems.

\begin{thm}\label{Integral-Thm}
{\rm (a)} The counting function $\kappa_{\Bbb Z}(G;p,q)$ {\em
($\bar\kappa_{\Bbb Z}(G;p,q)$)} is a polynomial function of positive
(nonnegative) integers $p,q$, having the same degree as the Tutte
polynomial $T_G$, and is independent of the chosen orientation
$\varepsilon$.

{\rm (b)} {\em Decomposition Formulas:}
\begin{align}
\kappa_{\Bbb Z}(G;x,y) &= \sum_{\rho\in\mathcal{O}(G)}
\kappa_{\rho}(G;x,y), \label{Int-Kappa-Decom}\\
\bar\kappa_{\Bbb Z}(G;x,y) &= \sum_{\rho\in\mathcal{O}(G)}
\bar\kappa_{\rho}(G;x,y). \label{Int-Bar-Kappa-Decom}
\end{align}

{\rm (c)} {\em Reciprocity Laws:}
\begin{align}
\kappa_{\Bbb Z}(G;-x,-y) &= \sum_{\rho\in\mathcal{O}(G)}
(-1)^{r(G)+|E(C_\rho)|} \bar\kappa_\rho(G;x,y),\label{Int-Kappa-Reciprocity}\\
\bar\kappa_{\Bbb Z}(G;-x,-y) &= \sum_{\rho\in\mathcal{O}(G)}
(-1)^{r(G)+|E(C_\rho)|} \kappa_\rho(G;x,y).
\label{Int-Bar-Kappa-Reciprocity}
\end{align}

{\rm (d)} {\em Specializations:}
\begin{equation}\label{Int-Kappa-Special-Variable}
\kappa_{\Bbb Z}(G;x,1)=\tau_{\Bbb Z}(G,x),\sp \kappa_{\Bbb
Z}(G;1,y)=\varphi_{\Bbb Z}(G,y),
\end{equation}
\begin{equation}\label{Int-Bar-Kappa-Special-Variable}
\bar\kappa_{\Bbb Z}(G;x,-1)=\bar\tau_{\Bbb Z}(G,x), \sp
\bar\kappa_{\Bbb Z}(G;-1,y)=\bar\varphi_{\Bbb Z}(G,y).
\end{equation}

{\rm (e)} {\em Convolution Formulas:}
\begin{equation}
\kappa_{\Bbb Z}(G;x,y) =\sum_{X\subseteq E} \tau_{\Bbb Z}(G/X,x)\,
\varphi_{\Bbb Z}(G|X,y), \label{Int-Kappa-Conv-Formula}
\end{equation}
\begin{equation}
\bar\kappa_{\Bbb Z}(G;x,y) =\sum_{X\subseteq E} \bar\tau_{\Bbb
Z}(G/X,x)\, \bar\varphi_{\Bbb Z}(G|X,y).
\label{Int-Bar-Kappa-Conv-Formula}
\end{equation}
\end{thm}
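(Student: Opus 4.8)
The plan is to deduce the whole statement from the two structural facts already in hand: that $\Delta_\textsc{ctf}(G,\varepsilon)$ is the disjoint union $\bigsqcup_{\rho\in\mathcal{O}(G)}\Delta^\rho_\textsc{ctf}(G,\varepsilon)$, and that each piece is lattice isomorphic to the 0-1 polytope $\Delta^+_\textsc{ctf}(G,\rho)=\Delta_\textsc{tn}^+(G,B_\rho)\times\Delta_\textsc{fl}^+(G,C_\rho)$. We would start with \emph{(b)}: since $K_\mathbbm{z}(G,\varepsilon;p,q)=({\Bbb Z}^2)^E\cap(p,q)\Delta_\textsc{ctf}(G,\varepsilon)$ and dilation in the two positive parameters respects the disjoint union, the lattice points split as a disjoint union over $\rho$, and the (unimodular, dilation-equivariant) isomorphism $\Delta^\rho_\textsc{ctf}(G,\varepsilon)\cong\Delta^+_\textsc{ctf}(G,\rho)$ shows the $\rho$-block contributes $\kappa_\rho(G;p,q)$; this is \eqref{Int-Kappa-Decom}, while \eqref{Int-Bar-Kappa-Decom} is the identity already recorded for $\widetilde K_\mathbbm{z}$, the topological sum being a disjoint one by construction. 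Since neither right-hand side involves $\varepsilon$, the independence assertion in \emph{(a)} drops out at once.

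For the rest of \emph{(a)} we would write $A_\rho(p)$, $F_\rho(q)$ for the numbers of lattice points in $p\,\Delta_\textsc{tn}^+(G,B_\rho)$, $q\,\Delta_\textsc{fl}^+(G,C_\rho)$, so that the product decomposition gives $\kappa_\rho(G;p,q)=A_\rho(p)\,F_\rho(q)$; each factor is the lattice-point enumerator of a relatively open 0-1 polytope, hence---by Ehrhart's theorem with Ehrhart--Macdonald reciprocity---an honest one-variable polynomial, so $\kappa_\mathbbm{z}=\sum_\rho A_\rho F_\rho$ is a polynomial in $(p,q)$, and the same with the closures $\bar\Delta_\textsc{tn}^+$, $\bar\Delta_\textsc{fl}^+$ handles $\bar\kappa_\mathbbm{z}$. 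For the degree we would note $\dim\Delta_\textsc{tn}^+(G,B_\rho)=r(G)-r(E(C_\rho))$ (the tensions vanishing on $C_\rho$ are the tensions of the acyclic condensation $G/E(C_\rho)$) and $\dim\Delta_\textsc{fl}^+(G,C_\rho)=|E(C_\rho)|-r(E(C_\rho))$ (the flows supported on $C_\rho$ are the flows of $C_\rho$), so that maximizing over $\rho$---an acyclic $\rho$ for the first, a $\rho$ restricting to a totally cyclic orientation of the bridgeless part for the second---makes the $p$- and $q$-degrees of $\kappa_\mathbbm{z}$ equal to $r(G)$ and $n(G)$, matching $T_G$.

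Parts \emph{(c)} and \emph{(d)} should come out of Ehrhart--Macdonald reciprocity applied factor by factor. The dimension count above gives $\dim\Delta^+_\textsc{ctf}(G,\rho)=(r(G)-r(E(C_\rho)))+(|E(C_\rho)|-r(E(C_\rho)))\equiv r(G)+|E(C_\rho)|\pmod 2$, and since $\bar\Delta^+_\textsc{ctf}(G,\rho)$ is the closure of $\Delta^+_\textsc{ctf}(G,\rho)$, reciprocity yields $\kappa_\rho(G;-x,-y)=(-1)^{r(G)+|E(C_\rho)|}\bar\kappa_\rho(G;x,y)$ and conversely $\bar\kappa_\rho(G;-x,-y)=(-1)^{r(G)+|E(C_\rho)|}\kappa_\rho(G;x,y)$; summing over $\rho$ with \eqref{Int-Kappa-Decom}--\eqref{Int-Bar-Kappa-Decom} produces \eqref{Int-Kappa-Reciprocity}--\eqref{Int-Bar-Kappa-Reciprocity}. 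For \eqref{Int-Kappa-Special-Variable} we would set $q=1$, which forces $g\equiv 0$ and leaves the nowhere-zero integer tensions $f$ of $(G,\varepsilon)$ with $|f(e)|<p$---that is, $\tau_\mathbbm{z}(G,p)$---and symmetrically $\kappa_\mathbbm{z}(G;1,q)=\varphi_\mathbbm{z}(G,q)$; for \eqref{Int-Bar-Kappa-Special-Variable} one substitutes $y=-1$ (resp.\ $x=-1$) into the factored form of $\bar\kappa_\mathbbm{z}$ and uses reciprocity on the flow (resp.\ tension) factor, reading off the defining count of $\bar\tau_\mathbbm{z}$ (resp.\ $\bar\varphi_\mathbbm{z}$).

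For \emph{(e)} the idea is to partition $\mathcal{O}(G)$ by the edge set $X:=E(C_\rho)$: the map $\rho\mapsto(X,\rho|_X,\text{ orientation induced on }G/X)$ should be a bijection of $\mathcal{O}(G)$ onto the triples in which $\rho|_X$ is a totally cyclic orientation of $G|X$ and the induced orientation of $G/X$ is acyclic. Because $A_\rho$ depends only on the acyclic digraph $G/X$ and $F_\rho$ only on $(G|X,\rho|_X)$, the double sum factors,
\[
\kappa_\mathbbm{z}(G;p,q)=\sum_{X\subseteq E}\Bigl(\sum_{\sigma\text{ acyclic on }G/X}\!A^{G/X}_\sigma(p)\Bigr)\Bigl(\sum_{\sigma'\text{ tot.\ cyclic on }G|X}\!F^{G|X}_{\sigma'}(q)\Bigr),
\]
and decomposing a nowhere-zero integer tension of $G/X$ by its sign pattern (each pattern giving an acyclic orientation, the only ones carrying a strictly positive tension) identifies the inner tension sum with $\tau_\mathbbm{z}(G/X,p)$, while a nowhere-zero integer flow of $G|X$ similarly yields a totally cyclic orientation, so the inner flow sum is $\varphi_\mathbbm{z}(G|X,q)$; this is \eqref{Int-Kappa-Conv-Formula}, and the same argument with closed polytopes and the definitions of $\bar\tau_\mathbbm{z}$, $\bar\varphi_\mathbbm{z}$ gives \eqref{Int-Bar-Kappa-Conv-Formula}. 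The step I expect to be the real obstacle is precisely this bijective classification of orientations by $(C_\rho,B_\rho)$ together with the verification that the two Ehrhart factors of $\kappa_\rho$ decouple cleanly along it; once that is secure, everything else is bookkeeping, the only analytic input being Ehrhart--Macdonald reciprocity for relatively open lattice polytopes.
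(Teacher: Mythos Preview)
Your proposal is correct and follows essentially the same route as the paper: the decomposition in (b) comes from Lemma~\ref{Delta-CTF-Decomposition}(a)--(b), the polynomiality, product form, and reciprocity in (a) and (c) from the Ehrhart argument recorded in Proposition~\ref{Local-Thm}, the specializations in (d) from killing one factor and invoking reciprocity on the other, and (e) from partitioning $\mathcal{O}(G)$ by $X=E(C_\rho)$ together with the decompositions (\ref{Int-Tau-Decom})--(\ref{Int-Bar-Phi-Decom}). The bijection you flag as the ``real obstacle'' is exactly the passage the paper uses without further comment in the proof of (e), relying on Lemma~\ref{EBC}; your identification of it is apt, and once it is in hand the rest is, as you say, bookkeeping.
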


Equivalent versions of (\ref{Int-Kappa-Decom}),
(\ref{Int-Kappa-Special-Variable}), and
(\ref{Int-Kappa-Conv-Formula}) were stated without proof by Kochol
\cite[p.178]{Kochol3} in a different approach by using chains and in
different notations. The essential difference is that Kochol's
$\kappa_{\Bbb Z}$ is defined formally by
(\ref{Int-Kappa-Conv-Formula}), having no intrinsic combinatorial
meaning as our definition (\ref{kappa-Z-defn}).

\begin{thm}\label{Modular-Thm}
{\rm (a)} The counting function $\kappa(G;p,q)$ {\em
($\bar\kappa(G;p,q)$)} is a polynomial function of positive
(nonnegative) integers $p,q$, having the same degree as the Tutte
polynomial $T_G$, and is independent of the chosen set of distinct
representatives of the cut-Eulerian equivalence classes.

{\rm (b)} {\em Decomposition Formulas:}
\begin{align}
\kappa(G;x,y) &=\sum_{[\rho]\in[{\mathcal O}(G)]}
\kappa_\rho(G;x,y), \label{Mod-Kappa-Decom} \\
\bar\kappa(G;x,y) &=\sum_{[\rho]\in[{\mathcal O}(G)]}
\bar\kappa_\rho(G;x,y). \label{Mod-Bar-Kappa-Decom}
\end{align}

{\rm (c)} {\em Reciprocity Laws:}
\begin{align}
\kappa(G;-x,-y) & = \sum_{[\rho]\in[\mathcal{O}(G)]}
(-1)^{r(G)+|E(C_\rho)|} \bar\kappa_\rho(G;x,y),
\label{Mod-Kappa-Reciprocity}\\
\bar\kappa(G;-x,-y) & = \sum_{[\rho]\in[\mathcal{O}(G)]}
(-1)^{r(G)+|E(C_\rho)|} \kappa_\rho(G;x,y).
\label{Mod-Bar-Kappa-Reciprocity}
\end{align}

{\rm (d)} {\em Specializations:}
\begin{align}
\kappa(G;x,1)&=\tau(G,x),\sp \kappa(G;1,y)=\varphi(G,y);
\label{Mod-Kappa-Special-Variable}\\
\bar\kappa(G;x,-1)&=\bar\tau(G,x),\sp
\bar\kappa(G;-1,y)=\bar\varphi(G,y).
\label{Mod-Bar-Kappa-Special-Variable}
\end{align}

{\rm (e)} {\em Convolution Formulas:}
\begin{align} \kappa(G;x,y) &=\sum_{X\subseteq E}
\tau(G/X,x)\, \varphi(G|X,y), \label{Conv-Formula-Kappa}\\
\bar\kappa(G;x,y) &=\sum_{X\subseteq E} \bar\tau(G/X,x)\,
\bar\varphi(G|X,y). \label{Conv-Formula-Bar-Kappa}
\end{align}
\end{thm}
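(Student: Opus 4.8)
The plan is to treat the parts in turn, the running device being that $\kappa_\rho$ and $\bar\kappa_\rho$ depend only on the cut-Eulerian class $[\rho]$: the stated unimodular isomorphism $\Delta^+_\textsc{ctf}(G,\rho)\cong\Delta^+_\textsc{ctf}(G,\sigma)$ for $\rho\sim_\textsc{ce}\sigma$, which extends to the closures, gives $\kappa_\rho=\kappa_\sigma$ and $\bar\kappa_\rho=\bar\kappa_\sigma$. Hence the sums defining $\kappa$ and $\bar\kappa$ are independent of the chosen set of representatives, which is the well-definedness claim of (a), and (b), i.e. (\ref{Mod-Kappa-Decom})--(\ref{Mod-Bar-Kappa-Decom}), is a tautology. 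For polynomiality: the product decomposition $\Delta^+_\textsc{ctf}(G,\rho)=\Delta^+_\textsc{tn}(G,B_\rho)\times\Delta^+_\textsc{fl}(G,C_\rho)$ exhibits each $\kappa_\rho$ (resp. $\bar\kappa_\rho$) as the product of the Ehrhart polynomials of the two relatively open (resp. closed) 0-1 factor polytopes, so $\kappa$ and $\bar\kappa$, being finite sums of such, are polynomials; their total degree is $\max_{[\rho]\in[\mathcal{O}(G)]}\dim\Delta^+_\textsc{ctf}(G,\rho)=\max_{\rho\in\mathcal{O}(G)}\dim\Delta^+_\textsc{ctf}(G,\rho)$, which equals $\deg T_G$ by Theorem~\ref{Integral-Thm}(a), since the same polynomials $\kappa_\rho$, $\bar\kappa_\rho$ occur in (\ref{Int-Kappa-Decom})--(\ref{Int-Bar-Kappa-Decom}).

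For the reciprocity laws (c) I would apply Ehrhart--Macdonald reciprocity to the two factors of $\Delta^+_\textsc{ctf}(G,\rho)$ separately, in the two independent dilation parameters: on $\Delta^+_\textsc{tn}(G,B_\rho)$, of dimension $r(G)-r(C_\rho)$, and on $\Delta^+_\textsc{fl}(G,C_\rho)$, of dimension $|E(C_\rho)|-r(C_\rho)$, the relatively open and the closed lattice-point counts are interchanged up to the factor $(-1)^{\dim}$. Multiplying the two factors,
\[
\kappa_\rho(G;-x,-y)=(-1)^{\dim\Delta^+_\textsc{ctf}(G,\rho)}\bar\kappa_\rho(G;x,y)=(-1)^{\,r(G)+|E(C_\rho)|}\bar\kappa_\rho(G;x,y),
\]
since $\dim\Delta^+_\textsc{ctf}(G,\rho)=\bigl(r(G)-r(C_\rho)\bigr)+\bigl(|E(C_\rho)|-r(C_\rho)\bigr)\equiv r(G)+|E(C_\rho)|\pmod 2$, and symmetrically with $\kappa_\rho$ and $\bar\kappa_\rho$ interchanged. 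Summing over $[\mathcal{O}(G)]$ gives (\ref{Mod-Kappa-Reciprocity})--(\ref{Mod-Bar-Kappa-Reciprocity}); this is the same computation behind (\ref{Int-Kappa-Reciprocity})--(\ref{Int-Bar-Kappa-Reciprocity}), only with the index set changed.

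For (e) and (d) I would argue from the intrinsic description recorded before the theorem, that $\kappa(G;p,q)$ counts complementary $(A,B)$-tension-flows with $|A|=p$ and $|B|=q$. In such a pair $(f,g)$ the complementary condition leaves exactly one of $f(e),g(e)$ nonzero for each $e$; putting $X=\{e:g(e)\neq 0\}$, the assignment $(f,g)\mapsto\bigl(X,\ f \text{ as a nowhere-zero } A\text{-tension of } G/X,\ g \text{ as a nowhere-zero } B\text{-flow of } G|X\bigr)$ is a bijection as $X$ ranges over all edge subsets, so $\kappa(G;p,q)=\sum_{X\subseteq E}\tau(G/X,p)\,\varphi(G|X,q)$ for all positive integers $p,q$, hence as polynomials $\kappa(G;x,y)=\sum_{X\subseteq E}\tau(G/X,x)\,\varphi(G|X,y)$, which is (\ref{Conv-Formula-Kappa}). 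Setting $q=1$ leaves only the term $X=\varnothing$ and yields $\kappa(G;x,1)=\tau(G,x)$; setting $p=1$ leaves only $X=E$ and yields $\kappa(G;1,y)=\varphi(G,y)$, which is (\ref{Mod-Kappa-Special-Variable}). On the dual side I would use $\bar\kappa(G;x,y)=R_G(x,y)$: rewriting the Kook--Reiner--Stanton convolution (\ref{Conv-Formula-Tutte-M}) through $R_H(x,y)=T_H(x+1,y+1)$ gives $R_G(x,y)=\sum_{X\subseteq E}R_{G/X}(x,-1)\,R_{G|X}(-1,y)$, which is (\ref{Conv-Formula-Bar-Kappa}) once $\bar\tau$ and $\bar\varphi$ are identified with $R_H(x,-1)$ and $R_H(-1,y)$ as in the earlier treatment of the dual tension and flow polynomials; the specializations (\ref{Mod-Bar-Kappa-Special-Variable}) are then the substitutions $y=-1$ and $x=-1$, just as in (\ref{Int-Bar-Kappa-Special-Variable}).

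I expect the genuine difficulty to lie not in the steps above --- which, granting the preparatory material, amount to bookkeeping, two-variable Ehrhart--Macdonald reciprocity, and one support-of-the-flow bijection --- but in that preparatory material itself: the identification of $\sum_{[\rho]\in[\mathcal{O}(G)]}\kappa_\rho$ with the group-theoretic count of complementary $(A,B)$-tension-flows, and of $\sum_{[\rho]\in[\mathcal{O}(G)]}\bar\kappa_\rho$ with $R_G$. Establishing the former requires showing, for each $X\subseteq E$, that the cut-Eulerian classes $[\rho]$ with $E(C_\rho)=X$ biject with the pairs consisting of a cut-equivalence class of acyclic orientations of $G/X$ and an Eulerian-equivalence class of strongly connected orientations of $G|X$, in a way under which the Ehrhart polynomials of $\Delta^+_\textsc{tn}(G,B_\rho)$ and of $\Delta^+_\textsc{fl}(G,C_\rho)$ sum, over these classes, to $\tau(G/X,x)$ and to $\varphi(G|X,y)$ respectively --- a Stanley-type orientation-count on the tension side together with its dual on the flow side. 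That is the delicate part; once it and its flow-dual are in place, Theorem~\ref{Modular-Thm} follows as sketched.
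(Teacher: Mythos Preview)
Your plan is correct in broad outline and you have correctly located the real content: the identification of the sum $\sum_{[\rho]}\kappa_\rho$ with the modular count $|K(G,\varepsilon;A,B)|$, and of $\sum_{[\rho]}\bar\kappa_\rho$ with $R_G$, is indeed where the work lies. But your organization differs from the paper's in two respects worth noting.

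First, in the paper $\kappa$ is \emph{defined} in Section~4 as the modular count $|K(G,\varepsilon;A,B)|$, so (\ref{Mod-Kappa-Decom}) is not a tautology there but the main substance of part~(b). The paper proves it via the $\Mod_{p,q}$ map: Lemmas~4.4--4.7 analyze the fibers of $\Mod_{p,q}$ restricted to $K_\mathbbm{z}(G,\varepsilon;p,q)$ and show each fiber has size $\#[\rho]$, while $\Mod_{p,q}$ carries $(p,q)\Delta^\rho_\textsc{ctf}$ bijectively to its image. You instead take the sum as the definition and push the equivalence with the modular count into ``preparatory material''; this is legitimate, but it means your direct bijective proof of (\ref{Conv-Formula-Kappa}) (via $X=\supp g$) is tacitly invoking that equivalence. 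The paper, by contrast, derives (\ref{Conv-Formula-Kappa}) from (\ref{Mod-Kappa-Decom}) and the product formula $\kappa_\rho=\tau_{\rho_{/X}}(G/X,\cdot)\,\varphi_{\rho_{|X}}(G|X,\cdot)$, then regroups by $X=E(C_\rho)$ and applies the known decompositions $\tau=\sum_{[\rho]\in[\mathcal{O}_\textsc{ac}]}\tau_\rho$ and $\varphi=\sum_{[\rho]\in[\mathcal{O}_\textsc{tc}]}\varphi_\rho$. Your bijection is shorter and more transparent once the modular description of $\kappa$ is in hand.

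Second, for the $\bar\kappa$ side of (d) and (e) you import $\bar\kappa=R_G$ (Theorem~\ref{Rank-Generating-Interpretation}) together with the Kook--Reiner--Stanton convolution and the identities $\bar\tau(H,x)=R_H(x,-1)$, $\bar\varphi(H,y)=R_H(-1,y)$. The paper does not: it proves (\ref{Mod-Bar-Kappa-Special-Variable}) and (\ref{Conv-Formula-Bar-Kappa}) directly from the local decomposition and the reciprocity $\kappa_\rho(G;-x,-y)=(-1)^{r(G)+|E(C_\rho)|}\bar\kappa_\rho(G;x,y)$, and establishes $\bar\kappa=R_G$ separately in Section~5. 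Your route is valid but makes Theorem~\ref{Modular-Thm} logically downstream of Theorem~\ref{Rank-Generating-Interpretation}, reversing the paper's order; since the paper's proof of Theorem~\ref{Rank-Generating-Interpretation} rests on the same $\Mod_{p,q}$ lemmas, no circularity arises, only a different dependency graph.
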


Equivalent versions of (\ref{Mod-Kappa-Decom}),
(\ref{Mod-Kappa-Special-Variable}), and (\ref{Conv-Formula-Kappa})
were stated without proof by Kochol \cite[p.178]{Kochol3} in a
different approach by using chains in different notations. The
essential difference is that Kochol's $\kappa$ is defined formally
by (\ref{Conv-Formula-Kappa}), having no intrinsic combinatorial
meaning as our definition (\ref{kappa-defn}).

\begin{thm}\label{Rank-Generating-Interpretation}
$\bar\kappa(G;x,y)=R_G(x,y)$.
\end{thm}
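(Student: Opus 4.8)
The plan is to derive Theorem~\ref{Rank-Generating-Interpretation} from the convolution formula \eqref{Conv-Formula-Bar-Kappa}, the Kook--Reiner--Stanton convolution identity \eqref{Conv-Formula-Tutte-M}, and the classical interpretations \eqref{Tau-Tutte}, \eqref{Phi-Tutte}. Recall that $R_G(x,y)=T_G(x+1,y+1)$ by the definition of $T_G$ from $R_G$, and that for the cycle matroid $M=M(G)$ one has $M/X=M(G/X)$ and $M|X=M(G|X)$. Thus \eqref{Conv-Formula-Tutte-M} for $M$, evaluated at $(x+1,y+1)$, reads
\begin{equation*}
R_G(x,y)=\sum_{X\subseteq E}T_{G/X}(x+1,0)\,T_{G|X}(0,y+1),
\end{equation*}
while \eqref{Conv-Formula-Bar-Kappa} gives $\bar\kappa(G;x,y)=\sum_{X\subseteq E}\bar\tau(G/X,x)\,\bar\varphi(G|X,y)$. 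Hence it suffices to prove the two single-variable identities $\bar\tau(H,x)=T_H(x+1,0)$ and $\bar\varphi(H,y)=T_H(0,y+1)$ for every graph $H$, and then match the two sums term by term over $X$.

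By \eqref{Tau-Tutte} and \eqref{Phi-Tutte} these are equivalent to the reciprocities $\bar\tau(H,x)=(-1)^{r(H)}\tau(H,-x)$ and $\bar\varphi(H,y)=(-1)^{n(H)}\varphi(H,-y)$, i.e.\ the dual tension (flow) polynomial is the sign-twisted reciprocal of the modular tension (flow) polynomial. These are the one-variable specializations of the theorem itself (restrict to $y=-1$ and $x=-1$, cf.\ \eqref{Mod-Bar-Kappa-Special-Variable}), and I would establish them directly: the modular tension (flow) polynomial counts lattice points in the dilations of the relatively open tension (flow) polytope, the dual polynomial counts lattice points in the dilations of its closure summed over the cut- (resp.\ Eulerian-) equivalence classes of orientations, and Ehrhart--Macdonald reciprocity contributes the factor $(-1)^{\dim}$, equal to $(-1)^{r(H)}$ (resp.\ $(-1)^{n(H)}$); in the nowhere-zero normalization this is Stanley's reciprocity theorem for $\chi$ together with its flow dual. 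That $\bar\tau$ and $\bar\varphi$, defined via their own dual complementary polytopes, decompose over equivalence classes in the required way is the structural input coming from the preceding sections.

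The step I expect to be the main obstacle is exactly this reciprocity: showing that the open tension polyhedron of $H$ decomposes into unimodular copies of a single relatively open polytope indexed precisely by the cut-equivalence classes of orientations, so that summing its closed Ehrhart polynomial over those classes yields $T_H(x+1,0)$ --- that is, matching the number of classes against the dimension exponent in the reciprocity. An alternative that avoids $\bar\tau,\bar\varphi$ is a fully direct count: the product decomposition $\Delta^+_\textsc{ctf}(G,\rho)=\Delta^+_\textsc{tn}(G,B_\rho)\times\Delta^+_\textsc{fl}(G,C_\rho)$ gives $\bar\kappa_\rho(G;p,q)=t_\rho(p)\,f_\rho(q)$, where $t_\rho(p)$ is the number of nonnegative integer tensions $\le p$ and $f_\rho(q)$ the number of nonnegative integer flows $\le q$ of $(G,\rho)$ (a nonnegative tension vanishes on the directed cycles, a nonnegative flow is supported on them); one then builds, for each $X\subseteq E$, a bijection between the triples counted by $\sum_{[\rho]}\bar\kappa_\rho(G;p,q)$ whose flow is supported on a rank-$r(X)$ subset of $X$ and the pairs consisting of a labeling from $\{1,\dots,p\}$ of the $r(G)-r(X)$ edges completing a fixed spanning forest of $(V,X)$ to one of $G$ and a labeling from $\{1,\dots,q\}$ of the $n(X)$ edges of $X$ outside that forest, thereby realizing the term $p^{r(G)-r(X)}q^{n(X)}$ of $R_G$. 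This route carries the same essential difficulty, namely a correct description of the cut-Eulerian equivalence classes.
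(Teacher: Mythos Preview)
Your approach is correct and takes a genuinely different route from the paper's. The paper does not invoke the Kook--Reiner--Stanton convolution \eqref{Conv-Formula-Tutte-M}; instead it computes $R_G(-p,-q)$ directly by inclusion--exclusion (Proposition~\ref{RPQ}), obtaining
\[
R_G(-p,-q)=(-1)^{r(G)}\sum_{(f,g)\in K(G,\varepsilon;{\Bbb Z}_p,{\Bbb Z}_q)}(-1)^{|\supp g|},
\]
then splits this sum over cut-Eulerian classes via the decomposition \eqref{Modular-Decom-rho} of $K(G,\varepsilon;{\Bbb Z}_p,{\Bbb Z}_q)$, observes that $|\supp g|=|E(C_\rho)|$ is constant on each piece, and applies the local reciprocity \eqref{Local-Reciprocity-Law} to convert $(-1)^{|E(C_\rho)|}\kappa_\rho$ into $\bar\kappa_\rho$. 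Your route --- match \eqref{Conv-Formula-Bar-Kappa} against \eqref{Conv-Formula-Tutte-M} evaluated at $(x+1,y+1)$, term by term via $\bar\tau(H,x)=T_H(x+1,0)$ and $\bar\varphi(H,y)=T_H(0,y+1)$ --- is cleaner once the convolution for $\bar\kappa$ is already in hand. Your anticipated obstacle is not one: the one-variable reciprocities $\bar\tau(H,x)=(-1)^{r(H)}\tau(H,-x)$ and $\bar\varphi(H,y)=(-1)^{n(H)}\varphi(H,-y)$ are quoted in Section~2 as established in \cite{Chen-I,Chen-II}, so combined with \eqref{Tau-Tutte} and \eqref{Phi-Tutte} there is nothing further to prove and no circularity. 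What you gain is modularity (the two-variable statement reduces to its one-variable specializations plus Kook--Reiner--Stanton); what the paper gains is self-containment (no external convolution identity is needed).
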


The counting definition of $\bar\kappa$ gives rise to an immediate
nontrivial combinatorial-geometric interpretation on the values of
the Tutte polynomial $T_G$ at all positive integers.

\begin{cor}[Combinatorial-Geometric Interpretation]\label{Tutte-Interpretation}
Let $\Rep[\mathcal{O}(G)]$ be any set of distinct representatives of
cut-Eulerian equivalence classes of $\mathcal{O}(G)$. Then
$T_G(p,q)$, where $p,q$ are positive integers, counts the number of
triples
\[
(\rho,f,g),
\]
where $\rho\in\Rep[\mathcal{O}(G)]$, $f$ are nonnegative
integer-valued $p$-tensions of $(G,\rho)$, and $g$ are nonnegative
integer-valued $q$-flows of $(G,\rho)$.
\end{cor}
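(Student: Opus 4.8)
The plan is to deduce the corollary directly from Theorem~\ref{Rank-Generating-Interpretation} and the counting description of $\bar\kappa$ recorded just before Theorem~\ref{Integral-Thm}, so that no genuinely new argument is required beyond a change of variables. Recall Whitney's rank generating polynomial $R_G(x,y)=\sum_{A\subseteq E}x^{\,r(E)-r(A)}\,y^{\,|A|-r(A)}$ and the defining expansion of the Tutte polynomial $T_G(x,y)=\sum_{A\subseteq E}(x-1)^{\,r(E)-r(A)}\,(y-1)^{\,|A|-r(A)}$, which together give the polynomial identity $T_G(x,y)=R_G(x-1,y-1)$. Hence for positive integers $p,q$ one has $T_G(p,q)=R_G(p-1,q-1)$, and by Theorem~\ref{Rank-Generating-Interpretation} this equals $\bar\kappa(G;p-1,q-1)$, the evaluation being at the nonnegative integer point $(p-1,q-1)$.

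Next I would invoke Theorem~\ref{Modular-Thm}(a): since $\bar\kappa(G;\cdot,\cdot)$ is a polynomial function of nonnegative integers, its value at $(p-1,q-1)$ coincides with the combinatorial counting function $\bar\kappa$, and moreover that count is independent of the chosen system $\Rep[\mathcal{O}(G)]$ of distinct representatives of the cut-Eulerian equivalence classes. Applying the counting description of $\bar\kappa$ with parameters $p-1,q-1$, we get that $\bar\kappa(G;p-1,q-1)$ is the number of triples $(\rho,f,g)$ with $\rho\in\Rep[\mathcal{O}(G)]$, $f$ an integer-valued tension of $(G,\rho)$ satisfying $0\le f\le p-1$, and $g$ an integer-valued flow of $(G,\rho)$ satisfying $0\le g\le q-1$.

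The last step is a translation of terminology: in the conventions of Section~2 a $p$-tension is a tension $f$ with $|f(e)|<p$ for all $e$, so a nonnegative integer-valued $p$-tension is exactly an integer-valued tension with $0\le f\le p-1$, and likewise a nonnegative integer-valued $q$-flow is an integer-valued flow with $0\le g\le q-1$. Substituting these identifications back, the triples counted by $\bar\kappa(G;p-1,q-1)$ are precisely the triples described in the statement, so their number is $T_G(p,q)$. I do not expect any real obstacle here; the only care needed is the bookkeeping, namely the unit shift between $R_G$ and $T_G$ and the passage from the strict bound $|f|<p$ to the closed range $[0,p-1]$ for nonnegative functions, and the observation that $p-1$ and $q-1$ remain nonnegative precisely because $p,q$ are assumed positive.
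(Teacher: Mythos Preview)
Your proposal is correct and follows essentially the same route as the paper: the paper's proof simply invokes Theorem~\ref{Rank-Generating-Interpretation} together with the definition $T_G(x,y)=R_G(x-1,y-1)$ and the observation that a nonnegative integer-valued $(p,q)$-tension-flow is precisely one satisfying $0\le f(e)\le p-1$, $0\le g(e)\le q-1$. Your extra remarks (that $p-1,q-1\ge0$ and the appeal to Theorem~\ref{Modular-Thm}(a) for independence of $\Rep[\mathcal{O}(G)]$) are helpful elaborations but do not change the underlying argument.
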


The interpretation in Corollary~\ref{Tutte-Interpretation} for the
Tutte polynomial $T_G$ is significantly different from
Formula~\eqref{Rank-Generating-Positive}, which can be obtained
directly and easily from definition of Whitney's rank generating
polynomial $R_G$, and which is a reformulation of one main result of
Breuer and Sanyal \cite[p.12]{Breuer-Sanyal}. See
Proposition~\ref{RPQ} and Corollary~\ref{Breuer-Sanyal}.


\section{Preliminaries}

We follow the books \cite{Bollobas1,Bondy-Murty1,Zhang1} for basic
concepts and notations of graphs. Let $G=(V,E)$ be a graph with
possible loops and multiple edges. We call $G$ {\em trivial} if it
has no edges. Let
\[
r(G)=|V|-c(G), \quad n(G)=|E|-r(G),
\]
where $c(G)$ is the number of connected components of $G$. Denote by
$G|X$ the subgraph $(V,X)$ induced by an edge subset $X\subseteq E$.
An {\em orientation} on $G$ is a (multivalued) function
$\varepsilon:V\times E\rightarrow\{-1,0,1\}$ such that (i)
$\varepsilon(v,e)$ has the ordered double-value $\pm1$ or $\mp1$ if
the edge $e$ is a loop at its endvertex $v$ and has a single-value
otherwise, (ii) $\varepsilon(v,e)=0$ if $v$ is not an endvertex of
the edge $e$, and (iii) $\varepsilon(u,e)\varepsilon(v,e)=-1$ if the
edge $e$ has distinct endvertices $u,v$. Pictorially, an orientation
of an edge $e$ can be expressed as an arrow from its one endvertex
$u$ to the other endvertex $v$; such information is encoded by
$\varepsilon(u,e)=1$ if the arrow points away from $u$ and
$\varepsilon(v,e)=-1$ if the arrow points towards $v$. So every edge
has exactly two orientations; each oriented edge contributes exactly
one in-degree and one out-degree. A graph $G$ with an orientation
$\varepsilon$ is referred to a {\em digraph} $(G,\varepsilon)$. We
denote by $\mathcal{O}(G)$ the set of all orientations on $G$.

A {\em cut} of $G$ is a nonempty edge subset $U$ of the form
$[S,S^c]$, where $S$ is a nonempty proper subset of $V$, $S^c:=V-S$,
and $[S,S^c]$ is the set of all edges between vertices of $S$ and
vertices of $S^c$. A {\em bond} is a minimal cut in the sense that
it does not contain any cut properly. Every cut is an edge-disjoint
union of bonds. A {\em directed cut} is a cut $U=[S,S^c]$ together
with an orientation on $U$ such that the arrows of edges are all
from $S$ to $S^c$ or all from $S^c$ to $S$; such an orientation is
called a {\em direction} of $U$. A {\em locally directed cut} is a
cut $U$ together with an orientation $\varepsilon_U$ such that
$(U,\varepsilon_U)$ is an edge-disjoint union of directed bonds;
such an orientation $\varepsilon_U$ is called a {\em local
direction} of cut $U$.

A subgraph $H$ of $G$ is {\em Eulerian} if $H$ has even degree at
every vertex. A {\em circuit} is a minimal, nontrivial (= {\rm
having at least one edge}), Eulerian subgraph in the sense that it
does not contain properly any nontrivial Eulerian subgraph. In fact,
a circuit is just a closed simple path. Every nontrivial Eulerian
subgraph is an edge-disjoint union of circuits. A {\em directed
Eulerian subgraph} is an Eulerian subgraph $H$ together with an
orientation such that at its every vertex the in-degree equals the
out-degree; such an orientation is called a {\em local direction} of
Eulerian graph $H$. The orientation of a directed circuit is called
a {\em direction} of that circuit. Every directed Eulerian subgraph
is an edge-disjoint union of directed circuits.

An orientation $\varepsilon$ on $G$ is said to be {\em acyclic} if
$(G,\varepsilon)$ contains no directed circuit, and is said to be
{\em totally cyclic} if $(G,\varepsilon)$ contains no directed cuts.
We denote by $\mathcal{O}_\AC(G)$ the set of all acyclic
orientations on $G$, and by $\mathcal{O}_\TC(G)$ the set of all
totally cyclic orientations on $G$. Totally cyclic orientations are
also referred to {\em strong orientations}.

Given two orientations $\rho,\sigma\in\mathcal{O}(G)$. We say that
$\rho$ is {\em cut equivalent} to $\sigma$, denoted
$\rho\sim_\CU\sigma$, if the subgraph induced by $E(\rho\neq\sigma)$
is a locally directed cut with orientation either $\rho$ or
$\sigma$. Indeed, $\sim_\CU$ is an equivalence relation on
${\mathcal O}(G)$; see \cite{Chen-I}. If $\rho\in{\mathcal
O}_\AC(G)$ and $\rho\sim_\CU\sigma$, then $\sigma\in{\mathcal
O}_\AC(G)$. Moreover, let $\rho\in{\mathcal O}_\AC(G)$, then
$\rho\sim_\CU\sigma$ if and only if $\rho\sim_\CE\sigma$.
Analogously, $\rho$ is said to be {\em Eulerian equivalent} to
$\sigma$, denoted $\rho\sim_\EU\sigma$, if the subgraph induced by
the edge subset $E(\rho\neq\sigma)$ is a directed Eulerian subgraph
with orientation either $\rho$ or $\sigma$. Indeed, $\sim_\EU$ is an
equivalence relation on ${\mathcal O}(G)$; see \cite{Chen-II}. If
$\rho\in{\mathcal O}_\TC(G)$ and $\rho\sim_\EU\sigma$, then
$\sigma\in{\mathcal O}_\TC(G)$. Moreover, let $\rho\in{\mathcal
O}_\TC(G)$, then $\rho\sim_\EU\sigma$ if and only if
$\rho\sim_\CE\sigma$.

Given two subgraphs $H_i\subseteq G$ with orientations
$\varepsilon_i$, $i=1,2$. The {\em coupling} of $\varepsilon_1$ and
$\varepsilon_2$ is a function $[\varepsilon_1,\varepsilon_2]:
E\rightarrow\{-1,0,1\}$, defined for each edge $e\in E$ (at its one
endvertex $v$) by
\[
[\varepsilon_1,\varepsilon_2](e)= \left\{\begin{array}{rl} 1 &
\mbox{if $e\in E(H_1\cap H_2)$, $\varepsilon_1(v,e)=\varepsilon_2(v,e)$, }\\
-1 & \mbox{if $e\in E(H_1\cap H_2)$, $\varepsilon_1(v,e)\neq\varepsilon_2(v,e)$,}\\
0 & \mbox{otherwise.}
\end{array}\right.
\]
In other words, $[\varepsilon_1,\varepsilon_2](e)=
\varepsilon_1(v,e)\varepsilon_2(v,e)$.

Now let $(G,\varepsilon)$ be a digraph and $A$ an abelian group
throughout the whole paper. There is a {\em boundary operator}
$\partial_\varepsilon: A^E\rightarrow A^V$, defined by
\[
(\partial_\varepsilon f)(v)=\sum_{e\in E}\varepsilon(v,e)f(e),
\]
where $\varepsilon(v,e)$ is counted twice (as $1$ and $-1$) if $e$
is a loop at its unique endvertex $v$. The {\em flow group}
$F(G,\varepsilon;A)$ is $\ker\partial_\varepsilon$, whose elements
are called {\em flows} or $A$-{\em flows} of $(G,\varepsilon)$.
There is a {\em coboundary operator}
$\delta_\varepsilon:A^V\rightarrow A^E$, defined by
\[
(\delta_\varepsilon f)(e)=f(u)-f(v),
\]
where $e$ is an edge whose arrow points from one endvertex $u$ to
the other endvertex $v$. The {\em tension group}
$T(G,\varepsilon;A)$ is $\im\delta_\varepsilon$, whose elements are
called {\em tensions} or $A$-{\em tensions} of $(G,\varepsilon)$.

A function $f\in A^E$ is {\em nowhere-zero} if $f(e)\neq 0$ for all
$e\in E$, and is a {\em $q$-function} if $A=\Bbb R$ and $|f(e)|<q$
for all $e\in E$. Let $\tau(G,q)$ ($\varphi(G,q)$) denote the number
of nowhere-zero $A$-tensions ($A$-flows) of $(G,\varepsilon)$ with
$|A|=q$. Let $\tau_{\Bbb Z}(G,q)$ ($\varphi_{\Bbb Z}(G,q)$) denote
the number of integer-valued nowhere-zero $q$-tensions ($q$-flows)
of $(G,\varepsilon)$. Let $\tau_\varepsilon(G,q)$
($\varphi_\varepsilon(G,q)$) denote the number of integer-valued
tensions (flows) $f$ of $(G,\varepsilon)$ such that $0<f(e)<q$ for
all $e\in E$. It is well known that $\tau$, $\varphi$, $\tau_{\Bbb
Z}$, $\varphi_{\Bbb Z}$, $\tau_\varepsilon$, $\varphi_\varepsilon$
are polynomial functions of positive integers $q$, and that $\tau$,
$\varphi$, $\tau_{\Bbb Z}$, $\varphi_{\Bbb Z}$ are independent of
the chosen orientation $\varepsilon$, and that $\tau$, $\varphi$ are
further independent of the group structure of $A$. The polynomial
$\tau$ ($\tau_{\Bbb Z}$, $\tau_\varepsilon$) is referred to the {\em
modular (integral, local) tension polynomial} of $G$, and $\varphi$
($\varphi_{\Bbb Z}$, $\varphi_\varepsilon$) to the {\em modular
(integral, local) flow polynomial}; see \cite{Chen-I,Chen-II} in
details.

There are polynomials {\em dual} to $\tau$, $\varphi$, $\tau_{\Bbb
Z}$, $\varphi_{\Bbb Z}$, $\tau_\varepsilon$, $\varphi_\varepsilon$
respectively. Let $\bar\tau_\varepsilon(G,q)$
($\bar\varphi_\varepsilon(G,q)$) denote the number of integer-valued
tensions (flows) $f$ of $(G,\varepsilon)$ such that $0\leq f(e)\leq
q$ for all $e\in E$. Let $\bar\tau_{\Bbb Z}(G,q)$
($\bar\varphi_{\Bbb Z}(G,q)$) denote the number of ordered pairs
$(\rho, f)$, where $\rho\in\mathcal{O}_\AC(G)$
($\rho\in\mathcal{O}_\TC(G)$) and $f$ is an integer-valued tension
(flow) of $(G,\rho)$ such that $0\leq f(e)\leq q$ for all $e\in E$.
Let $\Rep[\mathcal{O}_\AC(G)]$ ($\Rep[\mathcal{O}_\TC(G)]$) be any
set of distinct representatives of cut (Eulerian) equivalence
classes of $\mathcal{O}_\AC(G)$ ($\mathcal{O}_\TC(G)$). Let
$\bar\tau(G,q)$ ($\bar\varphi(G,q)$) denote the number of ordered
pairs $(\rho, f)$, where $\rho\in\Rep[\mathcal{O}_\AC(G)]$
($\Rep[\mathcal{O}_\TC(G)]$) and $f$ is an integer-valued tension
(flow) of $(G,\rho)$ such that $0\leq f(e)\leq q$ for all $e\in E$.
It turns out that $\bar\tau_\varepsilon$, $\bar\varphi_\varepsilon$,
$\bar\tau_{\Bbb Z}$, $\bar\varphi_{\Bbb Z}$, $\bar\tau$,
$\bar\varphi$ are polynomial functions of nonnegative integers $q$,
and are independent of the chosen set of distinct representatives.
Moreover, these polynomials represent the same polynomials
$\tau_\varepsilon$, $\varphi_\varepsilon$, $\tau_{\Bbb Z}$,
$\varphi_{\Bbb Z}$, $\tau$, $\varphi$ respectively, up to sign and
change of the variable; see \cite{Chen-I,Chen-II} in details.

Let $A,B$ be abelian groups. The {\em tension-flow group} of digraph
$(G,\varepsilon)$ is the abelian group
\[
\Omega(G,\varepsilon;A,B):=T(G,\varepsilon;A)\times
F(G,\varepsilon;B),
\]
whose elements are called {\em tension-flows}. A tension-flow
$(f,g)$ is said to be {\em nowhere-zero} if
\[
(f(e),g(e))\neq(0,0) \quad \mbox{for all $e\in E$,}
\]
and to be {\em complementary} if $\ker f=\supp g$, where
\[
\ker f=\{e\in E:f(e)=0\}, \quad \supp g=\{e\in E:g(e)\neq 0\}.
\]
We denote by $K(G,\varepsilon;A,B)$ the set of all complementary
tension-flows of $(G,\varepsilon)$. For simplicity, we write
$F(G,\varepsilon)$ for $F(G,\varepsilon;{\Bbb R})$,
$T(G,\varepsilon)$ for $T(G,\varepsilon;{\Bbb R})$, and
$\Omega(G,\varepsilon)$ for $\Omega(G,\varepsilon;{\Bbb R})$. It is
well known that $T(G,\varepsilon)$ and $F(G,\varepsilon)$ are
orthogonal complements in the Euclidean space ${\Bbb R}^E$. For
positive integer $p,q$, a real-valued tension-flow $(f,g)$ is called
a {\em $(p,q)$-tension-flow} if $|f(e)|<p$ and $|g(e)|<q$ for all
$e\in E$.

The present paper is conceptually rely on the Ehrhart theory of
lattice polytopes and polyhedra for which we refer to
\cite{Beck-Robins,Lattice-points,Ehrhart-poly,Stanley2}. Let $P$ be
a bounded lattice polyhedron (finite disjoint union of relatively
open convex lattice polytopes) in the Euclidean $d$-space ${\Bbb
R}^d$. Let $qP=\{qx:x\in P\}$ for positive integers $q$. The
counting function
\begin{equation}\label{Ehrhart-Poly-Defn}
L(P,q):=\#({\Bbb Z}^d\cap qP)
\end{equation}
is a polynomial function of positive integers $q$ of degree $\dim
P$, called the {\em Ehrhart polynomial} of $P$. If $P$ is a
relatively open lattice polytope and $\bar P$ its closure, then
$L(P,t)$ and $L(\bar P,t)$ satisfy the {\em Reciprocity Law:}
\begin{equation}\label{Ehrhart-Reciprocity-Law}
L(P,-t)=(-1)^{\dim P}L(\bar P,t).
\end{equation}
Moreover, $L(P,0)=(-1)^{\dim P}$, $L(\bar P,0)=1$.

\section{Integral complementary polynomials}

Recall that a real-valued tension-flow
$(f,g)\in\Omega(G,\varepsilon)$ is {\em complementary} if and only
if $f(e)g(e)=0$ and $f(e)+g(e)\neq 0$ for all $e\in E$. We denote by
$K(G,\varepsilon)$ the set of all real-valued complementary
tension-flows of $(G,\varepsilon)$. We introduce the {\em
complementary polyhedron}
\begin{equation}\label{Delta-CTF-Defn}
\Delta_\CTF(G,\varepsilon)=\{(f,g)\in K(G,\varepsilon):
0<|f(e)+g(e)|<1, e\in E\}
\end{equation}
which is a bounded relatively open non-convex polyhedron, the {\em
complementary polytope}
\begin{equation}\label{Positive-Delta-CTF-Defn}
\Delta^+_\CTF(G,\varepsilon)=\{(f,g)\in K(G,\varepsilon):
0<f(e)+g(e)<1, e\in E\}
\end{equation}
which is a relatively open convex $0$-$1$ polytope, and the
relatively open convex polytope (with respect to an orientation
$\rho$)
\begin{equation}
\Delta^\rho_\CTF(G,\varepsilon)
=\{(f,g)\in\Delta_\CTF(G,\varepsilon):[\rho,\varepsilon](e)(f+g)(e)>0,e\in
E\}.
\end{equation}
For positive integers $p,q$, recall the counting function
\begin{align}
\kappa_{\Bbb Z}(G;p,q) &=\#\{\mbox{${\Bbb Z}$-valued complementary
$(p,q)$-tension-flows of
$(G,\varepsilon)$}\}.\label{Int-Bar-Kappa-Defn*}
\end{align}
For nonnegative integers $p,q$, recall the counting function
\begin{align}
\bar\kappa_{\Bbb Z}(G;p,q) &=\#\{(\rho,f,g):
\mbox{$\rho\in\mathcal{O}(G)$, $(f,g) \in
\Omega(G,\varepsilon)\cap({\Bbb Z}\times{\Bbb Z})^E$}\nonumber\\
&\hspace{8mm} \mbox{such that $0\leq f(e)\leq p$, $0\leq g(e)\leq q$
for $e\in E$}\}. \label{Int-Bar-Kappa-Defn*}
\end{align}
Let
$\widetilde{\Delta}_\CTF(G,\varepsilon)=\sum_{\rho\in\mathcal{O}(G)}
\bar\Delta^\rho_\CTF(G,\varepsilon)$ be a topological sum defined as
a disjoint union of copies of closures
$\bar\Delta^\rho_\CTF(G,\varepsilon)$, one copy for each
$\rho\in{\mathcal O}(G)$. Then $\bar\kappa_{\Bbb Z}(G;p,q)$ counts
the number of lattice points of
$(p,q)\widetilde{\Delta}_\CTF(G,\varepsilon)$.

We introduce the following two special directed subgraphs:
\begin{align*}
B_\varepsilon &=\mbox{\rm union of directed bonds of
$(G,\varepsilon)$},\\
C_\varepsilon &=\mbox{\rm union of directed circuits of
$(G,\varepsilon)$}.
\end{align*}
It is clear that $B_\varepsilon$ is acyclic, $C_\varepsilon$ is
totally cyclic, and their edge sets are disjoint. The following
lemma is a special case of Minty's Colored Arc Lemma \cite{Minty1}.

\begin{lemma}\label{EBC}
$E=E(B_\varepsilon)\sqcup E(C_\varepsilon)$ {\rm (disjoint union)}.
\end{lemma}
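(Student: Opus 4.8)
The plan is to show that every edge of $(G,\varepsilon)$ lies in a directed bond or a directed circuit, but not both. The ``not both'' part is immediate: if an edge $e$ lay in a directed bond $[S,S^c]$ and a directed circuit $C$, then $C$ would have to cross the cut $[S,S^c]$, and since it must return to its starting side it crosses an even number of times, at least twice; but the bond is \emph{directed}, so all its edges point from $S$ to $S^c$ (say), which forces $C$ to traverse those crossing edges consistently in one direction while a closed walk must cross back, a contradiction. This shows $E(B_\varepsilon)\cap E(C_\varepsilon)=\emptyset$, which is the easy half.

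The substantive half is that $E = E(B_\varepsilon)\cup E(C_\varepsilon)$, i.e.\ every edge $e$ is in \emph{some} directed bond or \emph{some} directed circuit. Here I would invoke Minty's Colored Arc Lemma (or prove this special case directly) as the excerpt indicates. The direct argument: fix an edge $e=uv$ oriented from $u$ to $v$ in $\varepsilon$. Define a ``reachability'' relation by letting $S$ be the set of all vertices $w$ such that there is a directed walk in $(G,\varepsilon)$ from $v$ to $w$ (so $v\in S$). Two cases arise. If $u\in S$, then there is a directed walk from $v$ to $u$; concatenating with the arc $e$ gives a closed directed walk through $e$, which decomposes into directed circuits, one of which contains $e$; hence $e\in E(C_\varepsilon)$. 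If $u\notin S$, then consider the cut $[S^c,S]$ (note $v\in S$, $u\in S^c$, so $e\in[S^c,S]$): by the definition of $S$ every edge between $S^c$ and $S$ must be oriented from $S^c$ into $S$ — otherwise an edge pointing from $S$ to $S^c$ would extend a directed walk out of $S$, contradicting maximality of $S$ under reachability. Thus $[S^c,S]$ is a directed cut containing $e$; refining it to a minimal cut (a bond) still containing $e$ with the induced direction gives a directed bond through $e$, so $e\in E(B_\varepsilon)$.

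Combining the two halves yields the disjoint union $E = E(B_\varepsilon)\sqcup E(C_\varepsilon)$. I expect the main obstacle to be the bookkeeping in the ``not both'' direction, specifically making precise why a directed circuit cannot live inside a directed cut: the cleanest formulation is that for any $\pm1$-valued function on vertices (an indicator of $S$), its coboundary is a tension supported on the cut, and integrating a tension around a circuit gives zero — but a directed circuit inside a directed cut would force that integral to be a nonzero multiple of the cut's common orientation sign. One should also handle degenerate cases carefully (loops, which are circuits of length one and never lie in any cut; bridges, which are bonds by themselves and lie in no circuit), though these fall out of the walk/reachability argument without special treatment. Alternatively, one may simply cite \cite{Minty1} and remark that Lemma~\ref{EBC} is the stated special case.
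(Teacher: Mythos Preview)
Your proof is correct and takes a genuinely different route from the paper's. You argue edge by edge via reachability: for each arc $e=uv$, either $u$ is reachable from $v$ (giving a directed circuit through $e$) or not (giving a directed cut through $e$, which then refines to a directed bond). This is essentially the direct proof of the relevant case of Minty's lemma, and it is self-contained and elementary. The paper instead works globally: it contracts the entire strong part $C_\varepsilon$ to obtain the acyclic quotient $G/C_\varepsilon$, observes that in an acyclic digraph every edge lies in a directed bond, and then argues that directed bonds survive the blow-up (un-contraction) back to $(G,\varepsilon)$. Your approach is cleaner as a stand-alone lemma; the paper's approach has the advantage of foreshadowing the contraction $G/C_\varepsilon$ and the product decomposition $\Delta^+_\textsc{ctf}(G,\varepsilon)=\Delta^+_\textsc{tn}(G,B_\varepsilon)\times\Delta^+_\textsc{fl}(G,C_\varepsilon)$ that drive the rest of the argument. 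One small point worth making explicit in your write-up: when you ``refine'' the directed cut $[S^c,S]$ to a bond through $e$, you are using that a directed cut decomposes as a disjoint union of \emph{directed} bonds (split $S$ into the components of $G[S]$, then split the complement similarly); this is standard but deserves a sentence.
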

\begin{proof}
Since each directed circuit of $(G,\varepsilon)$ is edge-disjoint
from any directed bond of $(G,\varepsilon)$, it is clear that the
edge sets of $B_\varepsilon$ and $C_\varepsilon$ are disjoint. To
see that $E(B_\varepsilon)=E-E(C_\varepsilon)$, consider the
quotient digraph $G/{C_\varepsilon}$ obtained from $(G,\varepsilon)$
by contracting the edges of $C_\varepsilon$. Clearly,
$G/C_\varepsilon$ is acyclic and the edge set of $G/{C_\varepsilon}$
can be identified as $E-E(C_\varepsilon)$. It is clear that
$E(G/{C_\varepsilon})$ can be written as a union of directed bonds
(not necessarily edge-disjoint). For ease of discussion, we call the
inverse operation of contracting an edge as a {\em blow-up} at a
vertex. It is easy to see that blow-up does not change directed
bonds. So every directed bond of $G/C_\varepsilon$ is preserved into
a directed bond in $(G,\varepsilon)$ when the edges of
$C_\varepsilon$ are blew up from $G/C_\varepsilon$. So
$E-E(C_\varepsilon)$ is a union of directed bonds. Hence
$E(B_\varepsilon)=E-E(C_\varepsilon)$.
\end{proof}

Recall that the tension polytope and the flow polytope of digraph
$(G,\varepsilon)$, which are relatively open 0-1 polytopes (see
\cite{Chen-I,Chen-II}), are defined respectively as
\[
\Delta^+_\TN(G,\varepsilon)=\{f\in T(G,\varepsilon): 0<f(e)<1, e\in
E\},
\]
\[
\Delta^+_\FL(G,\varepsilon)=\{f\in F(G,\varepsilon): 0<f(e)<1, e\in
E\}.
\]
The complementary polytope $\Delta^+_\CTF(G,\varepsilon)$ can be
decomposed into a product of a face of $\Delta^+_\TN(G,\varepsilon)$
and a face of $\Delta^+_\FL(G,\varepsilon)$. In fact,
\begin{equation}\label{Positive-Delta-Product-Decom}
\Delta^{+}_{\CTF}(G,\varepsilon) =\Delta^+_{\TN}(G,B_\varepsilon)
\times \Delta^+_{\FL}(G,C_\varepsilon),
\end{equation}
where
\[
\Delta^+_{\TN}(G,B_\varepsilon) =\{f\in T(G,\varepsilon):
0<f(e)<1,e\in B_\varepsilon, f|_{C_\varepsilon}=0\},
\]
\[
\Delta^+_{\FL}(G,C_\varepsilon) =\{g\in F(G,\varepsilon):
0<g(e)<1,e\in C_\varepsilon, g|_{B_\varepsilon}=0\}.
\]
To find the relationship holding among the associated polyhedra and
polytopes, we need the involution
\[
P_{\rho,\sigma}:{\Bbb R}^E\rightarrow {\Bbb R}^E, \quad f\mapsto
[\rho,\sigma]\,f
\]
associated with two orientations $\rho,\sigma\in{\mathcal O}(G)$. It
induces an involution
\[
P_{\rho,\sigma}:({\Bbb R}\times{\Bbb R})^E\rightarrow ({\Bbb
R}\times{\Bbb R})^E, \quad (f,g)\mapsto
([\rho,\sigma]\,f,[\rho,\sigma]\,g).
\]

\begin{lemma}\label{Delta-CTF-Decomposition}
{\rm (a)} $\Delta_{\CTF}(G,\varepsilon)
 =\bigsqcup_{\rho\in{\mathcal O}(G)}
\Delta^\rho_\CTF(G,\varepsilon)$.

{\rm (b)} $\Delta^\rho_\CTF(G,\varepsilon) =
P_{\rho,\varepsilon}\Delta^+_\CTF(G,\rho)$.

{\rm (c)} $(p,q)\Delta^+_\CTF(G,\varepsilon)
=p\Delta^+_\TN(G,B_\varepsilon) \times
q\Delta^+_\FL(G,C_\varepsilon)$.

{\rm (d)} $p\Delta^+_\TN(G,B_\varepsilon) \simeq
p\Delta^+_\TN(G/C_\varepsilon,\varepsilon)$,
$q\Delta^+_\FL(G,C_\varepsilon) \simeq
q\Delta^+_\FL(G|C_\varepsilon,\varepsilon)$; the isomorphisms send
lattice points to lattice points.
\end{lemma}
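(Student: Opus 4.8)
The plan is to treat the four parts in the order (a), (b), (c), (d), since each relies on the structural facts already in place, principally Lemma~\ref{EBC} (Minty's decomposition $E=E(B_\varepsilon)\sqcup E(C_\varepsilon)$), the product decomposition \eqref{Positive-Delta-Product-Decom}, and the definition of the coupling $[\rho,\sigma]$ together with the involution $P_{\rho,\sigma}$. For part (a), the point is simply that $\Delta_\textsc{ctf}(G,\varepsilon)$ was defined by the condition $0<|f(e)+g(e)|<1$ for all $e$, so for each lattice point — more precisely, each real tension-flow $(f,g)$ in the polyhedron — the sign vector $s(e):=\operatorname{sign}\big((f+g)(e)\big)\in\{+1,-1\}$ is well-defined; this sign vector determines a unique orientation $\rho$ via $[\rho,\varepsilon](e)=s(e)$, and $(f,g)\in\Delta^\rho_\textsc{ctf}(G,\varepsilon)$ precisely for that $\rho$. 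Conversely each $\Delta^\rho_\textsc{ctf}(G,\varepsilon)$ is visibly contained in $\Delta_\textsc{ctf}(G,\varepsilon)$, and distinct $\rho$ give disjoint pieces because the defining sign conditions are incompatible. This gives the disjoint union.

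For part (b), I would check that the linear involution $P_{\rho,\varepsilon}\colon h\mapsto [\rho,\varepsilon]\,h$ carries $\Delta^+_\textsc{ctf}(G,\rho)$ bijectively onto $\Delta^\rho_\textsc{ctf}(G,\varepsilon)$. The key algebraic identities are that multiplying a function coordinatewise by the $\pm1$-vector $[\rho,\varepsilon]$ sends $\rho$-tensions to $\varepsilon$-tensions and $\rho$-flows to $\varepsilon$-flows — this follows from the way reorienting an edge negates the corresponding entry in the boundary/coboundary operator — and that it sends the complementary condition to the complementary condition (since it only changes signs, not the supports $\ker f$ and $\supp g$). Finally, if $(f,g)$ is a positive complementary tension-flow of $(G,\rho)$, i.e. $(f+g)(e)>0$ for all $e$, then $[\rho,\varepsilon](e)\big((f+g)(e)\big)$ has the sign of $[\rho,\varepsilon](e)$, which is exactly the defining condition of $\Delta^\rho_\textsc{ctf}(G,\varepsilon)$. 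Since $P_{\rho,\varepsilon}$ is an involution it is its own inverse, giving the bijection; and it is clearly lattice-preserving, which will be used implicitly later.

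For parts (c) and (d): (c) is \eqref{Positive-Delta-Product-Decom} with the two independent dilation parameters inserted — the tension factor lives on $B_\varepsilon$ and the flow factor on $C_\varepsilon$, and since these edge sets are disjoint by Lemma~\ref{EBC} the dilation by $(p,q)$ acts as dilation by $p$ on the first factor and by $q$ on the second, so the product structure is preserved. For (d), the isomorphism $\Delta^+_\textsc{tn}(G,B_\varepsilon)\simeq\Delta^+_\textsc{tn}(G/C_\varepsilon,\varepsilon)$ comes from the fact that a tension of $(G,\varepsilon)$ vanishing on $C_\varepsilon$ is the same datum as a tension of the contraction $G/C_\varepsilon$ (tensions on $G/C_\varepsilon$ correspond to tensions on $G$ vanishing on the contracted edges, as noted in the Remark in the introduction), and this correspondence is coordinate-wise identity on $E(G/C_\varepsilon)=E-E(C_\varepsilon)=E(B_\varepsilon)$, hence lattice-preserving and compatible with dilation by $p$; dually, a flow of $(G,\varepsilon)$ vanishing on $B_\varepsilon$ is the same as a flow of the restriction $G|C_\varepsilon$. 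I expect part (b) to be the main obstacle, not because it is deep but because it requires being careful that $P_{\rho,\varepsilon}$ genuinely intertwines the $\rho$- and $\varepsilon$-versions of both the tension group and the flow group simultaneously — one must verify $[\rho,\varepsilon]\cdot T(G,\rho)=T(G,\varepsilon)$ and $[\rho,\varepsilon]\cdot F(G,\rho)=F(G,\varepsilon)$ from the definitions of $\delta$ and $\partial$, checking the loop convention does not cause trouble — after which the complementary and positivity conditions transform as claimed.
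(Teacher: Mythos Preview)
Your proposal is correct and follows essentially the same approach as the paper's own proof: for (a) you recover $\rho$ from the sign vector of $f+g$, for (b) you use that $P_{\rho,\varepsilon}$ intertwines the tension and flow groups of $(G,\rho)$ and $(G,\varepsilon)$ and carries the positivity condition to the sign condition defining $\Delta^\rho_\textsc{ctf}$, and (c), (d) are handled via the product decomposition \eqref{Positive-Delta-Product-Decom} and the restriction maps $f\mapsto f|_{B_\varepsilon}$, $g\mapsto g|_{C_\varepsilon}$. The paper's write-up is terser---it simply asserts that $[\rho,\varepsilon](f,g)$ is a tension-flow of $(G,\rho)$ and calls (c) ``trivial''---but the underlying argument is the same as yours.
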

\begin{proof}
(a) The right-hand side is clearly contained in the left-hand side,
since each element in the right-hand side is a real-valued
complementary $(1,1)$-tension-flow. Conversely, for each real-valued
complementary $(1,1)$-tension-flow $(f,g)$ of $(G,\varepsilon)$, let
$\rho$ be the orientation given by
\[
\rho(v,e)=\left\{\begin{array}{rl} \varepsilon(v,e) & \mbox{if
$f(e)+g(e)>0$,}\\
-\varepsilon(v,e) & \mbox{if $f(e)+g(e)<0$,}
\end{array}\right.
\]
where $v$ is an endvertex of the edge $e\in E$. It is clear that
$[\rho,\varepsilon](f+g)(e)>0$ for all $e\in E$. Hence $(f,g)\in
\Delta^\rho_{\CTF}(G,\varepsilon)$.

(b) Let $(f,g)$ be a complementary $(1,1)$-tension-flow of
$(G,\varepsilon)$. Then $(f,g)\in\Delta^\rho_\CTF(G,\varepsilon)$ if
and only if $[\rho,\varepsilon](f+g)(e)>0$ for $e\in E$; i.e., if
and only if $[\rho,\varepsilon](f,g)\in\Delta^+_\CTF(G,\rho)$, since
$[\rho,\varepsilon](f,g):=([\rho,\varepsilon]f,[\rho,\varepsilon]g)$
is a tension-flow of $(G,\rho)$; and equivalently, $(f,g)\in
P_{\rho,\varepsilon}\Delta^+_\CTF(G,\rho)$, since
$P_{\rho,\varepsilon}(f,g):=[\rho,\varepsilon](f,g)$ and
$P_{\rho,\varepsilon}$ is an involution.

(c) Trivial.

(d) The bijection between $\Delta^+_\TN(G,B_\varepsilon)$ and
$\Delta^+_\TN(G/C_\varepsilon,\varepsilon)$ is given by $f\mapsto
f|_{B_\varepsilon}$; and the bijection between
$\Delta^+_\FL(G,C_\varepsilon)$ and
$\Delta^+_\FL(G|C_\varepsilon,\varepsilon)$ is given by $f\mapsto
f|_{C_\varepsilon}$.
\end{proof}

\textsc{Remark.} The polytope $\Delta^+_\TN(G,B_\varepsilon)$ cannot
be identified to the polytope
$\Delta^+_\TN(G|B_\varepsilon,\varepsilon)$, since a tension of
digraph $(G|B_\varepsilon,\varepsilon)$ can not be viewed as a
tension of $(G,\varepsilon)$ that vanishes on the edge subset of
$C_\varepsilon$.

\begin{prop}\label{Local-Thm}
{\rm (a)} The counting function $\kappa_\varepsilon(G;p,q)$ {\em
($\bar\kappa_\varepsilon(G;p,q)$)} is a polynomial function of
positive (nonnegative) integers $p,q$ of degree
$r(G/C_\varepsilon)+n(G|C_\varepsilon)$.

{\rm (b)} {\em Product Decomposition:}
\begin{equation}\label{Local-Kappa-Product}
\kappa_\varepsilon(G;x,y) =\tau_\varepsilon(G/C_\varepsilon,x)\,
\varphi_\varepsilon(G|C_\varepsilon,y),
\end{equation}
\begin{equation}\label{Local-Bar-Kappa-Product}
\bar\kappa_\varepsilon(G;x,y)
=\bar\tau_\varepsilon(G/C_\varepsilon,x)\,
\bar\varphi_\varepsilon(G|C_\varepsilon,y).
\end{equation}
Moreover, $\tau_\varepsilon(G/C_\varepsilon,x)$,
$\varphi_\varepsilon(G|C_\varepsilon,y)$,
$\bar\tau_\varepsilon(G/C_\varepsilon,x)$, and
$\bar\varphi_\varepsilon(G|C_\varepsilon,y)$ are the Ehrhart
polynomials of lattice polytopes
$\Delta^+_\TN(G/C_\varepsilon,\varepsilon)$,
$\Delta^+_\FL(G|C_\varepsilon,\varepsilon)$,
$\bar\Delta^+_\TN(G/C_\varepsilon,\varepsilon)$, and
$\bar\Delta^+_\FL(G|C_\varepsilon, \varepsilon)$ respectively.

{\rm (c)} {\em Reciprocity Law:}
\begin{equation}\label{Local-Reciprocity-Law}
\kappa_\varepsilon(G;-x,-y)
=(-1)^{r(G)+|E(C_\varepsilon)|}\bar\kappa_\varepsilon(G;x,y).
\end{equation}

{\rm (d)} {\em Specializations:}
\begin{equation}\label{Local-Kappa-Special-Variable-x}
\kappa_\varepsilon(G;x,1) =\tau_\varepsilon(G,x),
\end{equation}
\begin{equation}\label{Local-Kappa-Special-Variable-y}
\kappa_\varepsilon(G;1,y) =\varphi_\varepsilon(G,y);
\end{equation}
\begin{equation}\label{Local-Kappa-Special-Variable-Tension}
\bar\kappa_\varepsilon(G;x,-1)
=(-1)^{|E(C_\varepsilon)|}\bar\tau_\varepsilon(G,x),
\end{equation}
\begin{equation}\label{Local-Kappa-Special-Variable-Flow}
\bar\kappa_\varepsilon(G;-1,y)
=(-1)^{|E(B_\varepsilon)|}\bar\varphi_\varepsilon(G,y).
\end{equation}
\end{prop}
\begin{proof}
(a) It is an immediate consequence of (b).

(b) It follows from the product decomposition
(\ref{Positive-Delta-Product-Decom}) and the identifications of the
lattice polytopes in Lemma~\ref{Delta-CTF-Decomposition}(d).

(c) It follows from the Reciprocity Law
(\ref{Ehrhart-Reciprocity-Law}) of Ehrhart polynomials and the
relation
\[
r(G/C_\varepsilon)+n(G|C_\varepsilon) =|E(C_\varepsilon)|
+2\,c(G|C_\varepsilon) -r(G) -2\,c(G).
\]

(d) Let $p,q$ be positive integers. Let $(f,g)$ be a positive
integer-valued $(p,1)$-tension-flow of $(G,\varepsilon)$. Since
$0<g(e)<1$ is impossible for any $e\in E$, we must have
$E(C_\varepsilon)=\emptyset$ and $E(B_\varepsilon)=E$; i.e., the
orientation $\varepsilon$ must be acyclic. So $(f,g)$ is reduced to
a positive integer-valued $p$-tension $f$ of $(G,\varepsilon)$. We
thus have $\kappa_\varepsilon(G;p,1) =\tau_\varepsilon(G,p)$. Note
that $\tau_\varepsilon$ is not the zero polynomial if and only if
$G$ is loopless and contains some edges.

Analogously, let $(f,g)$ be a positive integer-valued
$(1,q)$-tension-flow of $(G,\varepsilon)$. Since $0<f(e)<1$ is
impossible for any $e\in E$, we must have
$E(B_\varepsilon)=\emptyset$ and $E(C_\varepsilon)=E$; i.e., the
orientation $\varepsilon$ must be totally cyclic. So $(f,g)$ is
reduced to a positive integer-valued $q$-flow $g$ of
$(G,\varepsilon)$. We thus have $\kappa_\varepsilon(G;1,q)
=\varphi_\varepsilon(G,q)$. Note that $\varphi_\varepsilon$ is not
the zero polynomial if and only if $G$ is bridgeless and contains
some edges. We finish the proof of
(\ref{Local-Kappa-Special-Variable-x}) and
(\ref{Local-Kappa-Special-Variable-y}).

Now applying
(\ref{Local-Reciprocity-Law})--(\ref{Local-Kappa-Special-Variable-y}),
we have
\begin{align*}
\bar\kappa_\varepsilon(G;x,-1) &= (-1)^{r(G)+|E(C_\varepsilon)|}
\kappa_\varepsilon(G;-x,1) \\
&= (-1)^{r(G)+|E(C_\varepsilon)|}
\tau_\varepsilon(G,-x)\\
&=(-1)^{|E(C_\varepsilon)|} \bar\tau_\varepsilon(G,x);
\end{align*}
\begin{align*}
\bar\kappa_\varepsilon(G;-1,y) &= (-1)^{r(G)+|E(C_\varepsilon)|}
\kappa_\varepsilon(G;1,-y) \\
&= (-1)^{r(G)+|E(C_\varepsilon)|}
\varphi_\varepsilon(G,-y)\\
&=(-1)^{|E(B_\varepsilon)|} \bar\varphi_\varepsilon(G,y).
\end{align*}
The last two equality in both follow from the Reciprocity Laws
\begin{equation}
\tau_\varepsilon(G,-x) =(-1)^{r(G)}\bar\tau_\varepsilon(G,x), \quad
\varphi_\varepsilon(G,-y) =(-1)^{n(G)}\bar\varphi_\varepsilon(G,y)
\end{equation}
respectively; see (1.9) of Theorem~1.2 in \cite{Chen-I} and (1.7) of
Theorem~1.1 in \cite{Chen-II}. We finish the proof of
(\ref{Local-Kappa-Special-Variable-Tension}) and
(\ref{Local-Kappa-Special-Variable-Flow}).
\end{proof}

\textsc{Proof of Theorem~\ref{Integral-Thm}} \vspace{1ex}

(a) The polynomiality follows from (b) and (c) of the same theorem,
and from (a) and (b) of Proposition~\ref{Local-Thm}. The
independence of the chosen orientation $\varepsilon$ follows from
the bijection $P_{\rho,\varepsilon}:
(p,q)\Delta_\CTF(G,\varepsilon)\rightarrow
(p,q)\Delta_\CTF(G,\rho)$, which sends lattice points to lattice
points.

(b) The decomposition (\ref{Int-Kappa-Decom}) follows from the
disjoint composition of the complementary polyhedron
$\Delta_\CTF(G,\varepsilon)$ (see
Lemma~\ref{Delta-CTF-Decomposition}(a)) and the property that the
involution $P_{\rho,\varepsilon}$ sends lattice points to lattice
points bijectively (see Lemma~\ref{Delta-CTF-Decomposition}(b)). The
decomposition (\ref{Int-Bar-Kappa-Decom}) is trivial by definition
of $\bar\kappa_{\Bbb Z}$.

(c) It follows from the Reciprocity Law
(\ref{Local-Reciprocity-Law}).

(d) Let $p,q$ be integers larger than or equal to $2$. Consider an
integer-valued complementary $(p,1)$-tension-flow $(f,g)$ of
$(G,\varepsilon)$. Since $|g|<1$ implies $g=0$, then $f$ is a
nowhere-zero $p$-tension of $(G,\varepsilon)$, and $(f,0)$ is
identified to the nowhere-zero $p$-tension $f$ of $(G,\varepsilon)$.
Thus $\kappa_{\Bbb Z}(G;p,1)=\tau_{\Bbb Z}(G,p)$. Likewise, an
integer-valued complementary $(1,q)$-tension-flow $(f,g)$ of
$(G,\varepsilon)$ implies that $f=0$ and $g$ is a nowhere-zero
$q$-flow of $(G,\varepsilon)$. Hence $\kappa_{\Bbb
Z}(G;1,q)=\varphi_{\Bbb Z}(G,q)$. We finish the proof of
(\ref{Int-Kappa-Special-Variable}).

Now consider the case of $y=1$ and the case of $x=1$ in
(\ref{Int-Kappa-Decom}). Applying Proposition~\ref{Local-Thm}(d),
(\ref{Int-Kappa-Decom}) becomes
\begin{align}
\tau_{\Bbb Z}(G,x) &= \sum_{\rho\in\mathcal{O}_\AC(G)}
\tau_\rho(G,x),\label{Int-Tau-Decom}\\
\varphi_{\Bbb Z}(G,y) &= \sum_{\rho\in\mathcal{O}_\TC(G)}
\varphi_\rho(G,y). \label{Int-Phi-Decom}
\end{align}
We have recovered (1.7) of Theorem~1.2 in \cite[p.428]{Chen-I} and
(1.8) of Theorem~1.1(b) in \cite[p.754]{Chen-II}; see also
\cite{Kochol1,Kochol2} for equivalent versions. The dual polynomials
$\bar\tau_{\Bbb Z}$ and $\bar\varphi_{\Bbb Z}$ have similar
decompositions by their definitions:
\begin{align}
\bar\tau_{\Bbb Z}(G,x) &= \sum_{\rho\in\mathcal{O}_\AC(G)}
\bar\tau_\rho(G,x),\label{Int-Bar-Tau-Decom}\\
\bar\varphi_{\Bbb Z}(G,y) &= \sum_{\rho\in\mathcal{O}_\TC(G)}
\bar\varphi_\rho(G,y). \label{Int-Bar-Phi-Decom}
\end{align}
Analogously, consider the case of $y=-1$ and the case of $x=-1$ in
(\ref{Int-Bar-Kappa-Decom}). We have
\begin{align*}
\bar\kappa_{\Bbb Z}(G;x,-1) &= \sum_{\rho\in\mathcal{O}(G)}
(-1)^{r(G)+|E(C_\rho)|} \kappa_\rho(G;-x,1) \\
&= \sum_{\rho\in\mathcal{O}_\AC(G)}
(-1)^{r(G)} \tau_\rho(G,-x) \\
&= \sum_{\rho\in\mathcal{O}_\AC(G)}
\bar\tau_\rho(G,x)=\bar\tau_{\Bbb Z}(G,x),
\end{align*}
where the first equality follows from
(\ref{Int-Bar-Kappa-Reciprocity}), the second one follows from
(\ref{Local-Kappa-Special-Variable-x}) and the equivalence of
$\tau_\rho\neq 0$ and $\rho\in{\mathcal O}_\AC(G)$, and the third
one follows from (\ref{Int-Bar-Tau-Decom}). Using $r(G)+n(G)=|E|$
and $|E|=|E(B_\rho)|+|E(C_\rho)|$, a similar argument implies
\begin{align*}
\bar\kappa_{\Bbb Z}(G;-1,y) &= \sum_{\rho\in\mathcal{O}(G)}
(-1)^{n(G)+|E(B_\rho)|} \kappa_\rho(G;1,-y) \\
&= \sum_{\rho\in\mathcal{O}_\TC(G)}
(-1)^{n(G)} \varphi_\rho(G,-y) \\
&= \sum_{\rho\in\mathcal{O}_\TC(G)}
\bar\varphi_\rho(G,y)=\bar\varphi_{\Bbb Z}(G,y),
\end{align*}
where the first equality follows from
(\ref{Int-Bar-Kappa-Reciprocity}), the second from
(\ref{Local-Kappa-Special-Variable-y}) and the equivalence of
$\varphi_\rho\neq 0$ and $\rho\in{\mathcal O}_\TC(G)$, and the last
one from (\ref{Int-Bar-Phi-Decom}). We finish the proof of
(\ref{Int-Bar-Kappa-Special-Variable}).

(e) Finally, let $X\subseteq E$ be an edge subset. For each
orientation $\rho\in\mathcal{O}(G)$ such that $E(C_\rho)=X$ (such an
orientation my not exist), let $\rho_{|X}$ denote the restriction of
$\rho$ on $G|X$, and let $\rho_{/X}$ denote the induced orientation
on $G/X$. Note that $\rho_{|X}$ is totally cyclic on $G|X$ and
$\rho_{/X}$ is acyclic on $G/X$. Then
\begin{align*}
\kappa_{\Bbb Z}(G;x,y) &= \sum_{X\subseteq E}
\sum_{\rho\in\mathcal{O}(G)\atop C_\rho=X}
\tau_\rho(G/C_\rho,x)\, \varphi_\rho(G|C_\rho,y)\\
&= \sum_{X\subseteq E} \sum_{\rho\in\mathcal{O}(G)\atop C_\rho=X}
\tau_{\rho_{|X}}(G/X,x)\,
\varphi_{\rho_{/X}}(G|X,y)\\
&= \sum_{X\subseteq E} \sum_{\rho\in\mathcal{O}_\AC(G/X)\atop
\sigma\in\mathcal{O}_\TC(G|X)} \tau_\rho(G/X,x)\,
\varphi_\sigma(G|X,y),
\end{align*}
where the first equality follows from the equations
(\ref{Int-Kappa-Decom}) and (\ref{Local-Kappa-Product}). Apply
(\ref{Int-Tau-Decom}) to graph $G/X$ and (\ref{Int-Phi-Decom}) to
graph $G|X$ in the above last equality; we obtain
(\ref{Int-Kappa-Conv-Formula}) immediately. Analogously, the
equations (\ref{Int-Bar-Kappa-Decom}) and
(\ref{Local-Bar-Kappa-Product}) imply that
\begin{align*}
\bar\kappa_{\Bbb Z}(G;x,y) &= \sum_{X\subseteq E}
\sum_{\rho\in\mathcal{O}(G)\atop C_\rho=X}
\bar\tau_\rho(G/C_\rho,x)\,
\bar\varphi_\rho(G|C_\rho,y)\\
&= \sum_{X\subseteq E} \sum_{\rho\in\mathcal{O}(G)\atop C_\rho=X}
\bar\tau_{\rho_{|X}}(G/X,x)\,
\bar\varphi_{\rho_{/X}}(G|X,y)\\
&= \sum_{X\subseteq E} \sum_{\rho\in\mathcal{O}_\AC(G/X)\atop
\sigma\in\mathcal{O}_\TC(G|X)} \bar\tau_\rho(G/X,x)\,
\bar\varphi_\sigma(G|X,y),
\end{align*}
Applying (\ref{Int-Bar-Tau-Decom}) to $G/X$ and
(\ref{Int-Bar-Phi-Decom}) to $G|X$, we obtain
(\ref{Int-Bar-Kappa-Conv-Formula}). \hfill{$\Box$} \vspace{2ex}

The polynomials $\kappa_{\Bbb Z}$ and $\bar\kappa_{\Bbb Z}$ have the
following particular combinatorial interpretations at some special
integers.

\begin{cor}\label{Integral-Kappa-Special}
{\rm (a)} $\bar\kappa_{\Bbb Z}(G;0,0) =|\mathcal{O}(G)|$,
\begin{align*}
|\kappa_{\Bbb Z}(G;1,0)| &=\bar\kappa_{\Bbb Z}(G;-1,0)
= |\mathcal{O}_\TC(G)|,\\
|\kappa_{\Bbb Z}(G;0,1)| &=\bar\kappa_{\Bbb Z}(G;0,-1) =
|\mathcal{O}_\AC(G)|, \\
\kappa_{\Bbb Z}(G;1,1) &=\bar\kappa_{\Bbb Z}(G;-1,-1)=0.
\end{align*}

{\rm (b)} Let $\mathcal{O}_\CU(G)$, $\mathcal{O}_\EU(G)$, and
$\mathcal{O}_\CE(G)$ be the sets of orientations $\rho$ such that
$(G,\rho)$ is a locally directed cut, a directed Eulerian graph, and
an edge-disjoint union of a locally directed cut and a directed
Eulerian subgraph, respectively. Then
\begin{align*}
\kappa_{\Bbb Z}(G;2,1) &= |\bar\kappa_{\Bbb Z}(G;-2,-1)| =
|\mathcal{O}_\CU(G)|,\\
\kappa_{\Bbb Z}(G;1,2) &= |\bar\kappa_{\Bbb Z}(G;-1,-2)| =
|\mathcal{O}_\EU(G)|,\\
\kappa_{\Bbb Z}(G;2,2) &= |\mathcal{O}_\CE(G)|.
\end{align*}

{\rm (c)} For each orientation $\rho$, let $[\rho]_\CU$,
$[\rho]_\EU$, and $[\rho]_\CE$ denote the equivalence classes of
${\mathcal O}(G)$ under the cut, Eulerian, and cut-Eulerian
equivalence relations respectively. Then
\begin{align*}
\bar\kappa_{\Bbb Z}(G;1,0) & = \sum_{\rho\in{\mathcal O}(G)} \#[\rho]_\CU,\\
\bar\kappa_{\Bbb Z}(G;0,1) & = \sum_{\rho\in{\mathcal O}(G)} \#[\rho]_\EU,\\
\bar\kappa_{\Bbb Z}(G;1,1) & = \sum_{\rho\in{\mathcal O}(G)}
\#[\rho]_\CE.
\end{align*}
\end{cor}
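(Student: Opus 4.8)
The plan is to derive everything from the decomposition formulas of Theorem~\ref{Integral-Thm}(b), $\kappa_{\mathbbm z}(G;x,y)=\sum_{\rho\in\mathcal{O}(G)}\kappa_\rho(G;x,y)$ and $\bar\kappa_{\mathbbm z}(G;x,y)=\sum_{\rho\in\mathcal{O}(G)}\bar\kappa_\rho(G;x,y)$, together with the product formulas of Proposition~\ref{Local-Thm}(b), $\kappa_\rho(G;x,y)=\tau_\rho(G/C_\rho,x)\,\varphi_\rho(G|C_\rho,y)$ and $\bar\kappa_\rho(G;x,y)=\bar\tau_\rho(G/C_\rho,x)\,\bar\varphi_\rho(G|C_\rho,y)$. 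It then suffices to evaluate the one-variable local polynomials $\tau_\rho,\varphi_\rho,\bar\tau_\rho,\bar\varphi_\rho$ at the arguments $0,1,2$ straight from their lattice-point definitions, sum over $\rho$, and take absolute values; the claims about $\bar\kappa_{\mathbbm z}$ at negative arguments come out of the same evaluations via the reciprocity law~(\ref{Int-Bar-Kappa-Reciprocity}).

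The elementary local evaluations I would record, for any digraph $(H,\rho)$, are: $\bar\tau_\rho(H,0)=\bar\varphi_\rho(H,0)=1$ (the constant term of a closed-polytope Ehrhart polynomial); $\tau_\rho(G/C_\rho,0)=(-1)^{r(G/C_\rho)}$ and $\varphi_\rho(G|C_\rho,0)=(-1)^{n(G|C_\rho)}$, since by Lemma~\ref{EBC} the induced orientations on $G/C_\rho$ and $G|C_\rho$ are acyclic and totally cyclic, so these are open-polytope Ehrhart values at $0$; $\tau_\rho(H,1)=\varphi_\rho(H,1)=1$ if $E(H)=\emptyset$ and $0$ otherwise (no integer lies strictly between $0$ and $1$); $\tau_\rho(H,2)=1$ if $\mathbf{1}$ is a tension of $(H,\rho)$ and $0$ otherwise, and likewise $\varphi_\rho(H,2)=1$ if $\mathbf{1}$ is a flow of $(H,\rho)$ and $0$ otherwise; and $\bar\tau_\rho(H,1)$, $\bar\varphi_\rho(H,1)$ are the numbers of $\{0,1\}$-valued tensions, resp.\ flows, of $(H,\rho)$. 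Because $E(G/C_\rho)=E(B_\rho)$ is empty exactly when $\rho$ is totally cyclic and $E(G|C_\rho)=E(C_\rho)$ exactly when $\rho$ is acyclic, the product formula makes each value explicit: $\bar\kappa_\rho(G;0,0)=1$ for all $\rho$; $\kappa_\rho(G;0,1)=(-1)^{r(G)}$ for acyclic $\rho$ and $0$ otherwise, dually $\kappa_\rho(G;1,0)=(-1)^{n(G)}$ for totally cyclic $\rho$ and $0$ otherwise; $\kappa_\rho(G;1,1)=0$ once $G$ has an edge; $\kappa_\rho(G;2,1)$ equals $1$ if $\mathbf{1}$ is a tension of $(G,\rho)$ and $0$ otherwise, $\kappa_\rho(G;1,2)$ the same with flows, and $\kappa_\rho(G;2,2)$ equals $1$ iff $\mathbf{1}$ is simultaneously a tension of $(G/C_\rho,\rho_{/C_\rho})$ and a flow of $(G|C_\rho,\rho_{|C_\rho})$; and $\bar\kappa_\rho(G;1,0)=\bar\tau_\rho(G/C_\rho,1)$, $\bar\kappa_\rho(G;0,1)=\bar\varphi_\rho(G|C_\rho,1)$, $\bar\kappa_\rho(G;1,1)=\bar\tau_\rho(G/C_\rho,1)\,\bar\varphi_\rho(G|C_\rho,1)$. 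Summing over $\rho$ and taking absolute values settles part~(a) outright, and substituting these $\kappa_\rho$ into~(\ref{Int-Bar-Kappa-Reciprocity}) settles the barred negative-argument statements, since on each nonzero term $|E(C_\rho)|\in\{0,|E|\}$, so the weight $(-1)^{r(G)+|E(C_\rho)|}$ absorbs the sign $(-1)^{r(G)}$ or $(-1)^{n(G)}$. What remains of~(b) and~(c) is the evaluation of $\#\{\rho:\mathbf{1}\text{ is a tension of }(G,\rho)\}$, $\#\{\rho:\mathbf{1}\text{ is a flow of }(G,\rho)\}$, $\#\{\rho:\mathbf{1}\text{ is a tension of }(G/C_\rho,\rho_{/C_\rho})\text{ and a flow of }(G|C_\rho,\rho_{|C_\rho})\}$, $\sum_\rho\bar\tau_\rho(G/C_\rho,1)$, $\sum_\rho\bar\varphi_\rho(G|C_\rho,1)$, and $\sum_\rho\bar\tau_\rho(G/C_\rho,1)\,\bar\varphi_\rho(G|C_\rho,1)$.

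For these I would set up the following dictionary. In an arbitrary digraph $(H,\rho)$, the map $f\mapsto\supp f$ is a bijection from the $\{0,1\}$-valued tensions (resp.\ flows) of $(H,\rho)$ onto the edge subsets $D$ such that $\langle D,\rho\rangle$ is a disjoint union of $\rho$-directed bonds, that is a locally directed cut (resp.\ of $\rho$-directed circuits, a directed Eulerian subgraph): for a cut one passes between a vertex potential $h$ with $f=\delta_\rho h$ and its level sets $S_i=\{v:h(v)\ge i\}$, which exhibit $\supp f$ as a disjoint union of $\rho$-directed cuts, hence of $\rho$-directed bonds; for a flow it is the vertex-balance condition. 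In particular $\mathbf{1}$ is a tension of $(H,\rho)$ iff $(H,\rho)$ is itself a locally directed cut, and $\mathbf{1}$ is a flow of $(H,\rho)$ iff $(H,\rho)$ is a directed Eulerian graph. Next, a $\rho$-directed bond of $G$ is edge-disjoint from every directed circuit, hence lies in $E(B_\rho)$, and the cocircuits of $G$ disjoint from $E(C_\rho)$ are exactly the cocircuits of $G/C_\rho$ with directions unchanged; dually every directed Eulerian subgraph of $(G,\rho)$ lies in $E(C_\rho)$ and is a directed Eulerian subgraph of $(G|C_\rho,\rho)$, and conversely. Hence $\mathbf{1}$ is a tension of $(G/C_\rho,\rho_{/C_\rho})$ iff $B_\rho$ is a locally directed cut, and $\mathbf{1}$ is a flow of $(G|C_\rho,\rho_{|C_\rho})$ iff $C_\rho$ is a directed Eulerian subgraph; so the three counts above equal $|\mathcal{O}_\textsc{cu}(G)|$, $|\mathcal{O}_\textsc{eu}(G)|$, and $|\mathcal{O}_\textsc{ce}(G)|$ (the last because $(G,\rho)=B_\rho\sqcup C_\rho$ being a disjoint union of a locally directed cut and a directed Eulerian subgraph is precisely the condition $\rho\in\mathcal{O}_\textsc{ce}(G)$), finishing~(b). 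For~(c) the same transfer identifies $\bar\tau_\rho(G/C_\rho,1)$ with the number of locally directed cuts of $(G,\rho)$ and $\bar\varphi_\rho(G|C_\rho,1)$ with the number of directed Eulerian subgraphs of $(G,\rho)$; since reorienting $\rho$ along such an edge subset is exactly the move defining $\sim_\textsc{cu}$ (resp.\ $\sim_\textsc{eu}$), these numbers are $\#[\rho]_\textsc{cu}$ and $\#[\rho]_\textsc{eu}$; and $D\mapsto(D\cap E(B_\rho),\,D\cap E(C_\rho))$ is a bijection from the edge subsets defining $\sim_\textsc{ce}$ onto the product of the locally directed cuts of $(G,\rho)$ with its directed Eulerian subgraphs, so $\#[\rho]_\textsc{ce}=\#[\rho]_\textsc{cu}\cdot\#[\rho]_\textsc{eu}=\bar\kappa_\rho(G;1,1)$. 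Summing over $\rho\in\mathcal{O}(G)$ gives~(c).

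The main obstacle is exactly this dictionary: pinning down that a ``locally directed cut of $(G,\rho)$'' is the same object whether read in $G$ or in $G/C_\rho$, that a ``directed Eulerian subgraph'' is the same in $G$ or in $G|C_\rho$, and that these families are in bijection with the full equivalence classes of $\rho$ under $\sim_\textsc{cu}$, $\sim_\textsc{eu}$, $\sim_\textsc{ce}$. This rests only on the edge-disjointness of directed bonds and directed circuits (as in the proof of Lemma~\ref{EBC}) and on the stability of cocircuits under contraction away from them, but the bookkeeping is easy to get tangled in. A minor caveat: for the edgeless graph $\kappa_{\mathbbm z}(G;1,1)=\bar\kappa_{\mathbbm z}(G;-1,-1)=1\neq 0$, so throughout I take $G$ to have at least one edge, the three ``$=0$'' identities being vacuous otherwise.
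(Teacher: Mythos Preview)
Your proposal is correct and follows essentially the same route as the paper: both evaluate the local summands $\kappa_\rho$ and $\bar\kappa_\rho$ at the required arguments via the product formula of Proposition~\ref{Local-Thm}(b), sum over $\rho$ using Theorem~\ref{Integral-Thm}(b), and handle the negative-argument barred statements through the reciprocity~(\ref{Int-Bar-Kappa-Reciprocity}). The one organizational difference is that for parts~(a) and~(b) the paper first passes through the specializations $\kappa_{\mathbbm z}(G;x,1)=\tau_{\mathbbm z}(G,x)$, $\kappa_{\mathbbm z}(G;1,y)=\varphi_{\mathbbm z}(G,y)$ of Theorem~\ref{Integral-Thm}(d) and then evaluates the one-variable integral polynomials, whereas you stay with the two-variable local polynomials throughout; the computations are identical either way. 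For part~(c) the paper simply cites the identifications $\bar\tau_\rho(G,1)=\#[\rho]_\textsc{cu}$, $\bar\varphi_\rho(G,1)=\#[\rho]_\textsc{eu}$, and $\bar\kappa_\rho(G;1,1)=\#[\rho]_\textsc{ce}$ from \cite{Chen-I}, \cite{Chen-II}, and Proposition~\ref{equivalence-card} of the present paper, while you supply exactly the arguments behind those citations (the support bijection between $\{0,1\}$-valued tensions/flows and locally directed cuts/directed Eulerian subgraphs, and the transfer between $G$ and $G/C_\rho$, $G|C_\rho$). Your observation about the edgeless case is a genuine corner case the paper leaves implicit.
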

\begin{proof}
(a) Since $\bar\kappa_\rho(G;0,0)=1$ for $\rho\in{\mathcal O}(G)$,
then by (\ref{Int-Bar-Kappa-Decom}) we have $\bar\kappa_{\Bbb
Z}(G;0,0)=|\mathcal{O}(G)|$.

Since $\tau_\rho(G,0)=(-1)^{r(G)}$ for $\rho\in\mathcal{O}_\AC(G)$,
then by (\ref{Int-Tau-Decom}) we have $\tau_{\Bbb Z}(G,0)
=(-1)^{r(G)}|\mathcal{O}_\AC(G)|$. Since
$\varphi_\rho(G,0)=(-1)^{n(G)}$ for $\rho\in\mathcal{O}_\TC(G)$,
then by (\ref{Int-Phi-Decom}) we have $\varphi_{\Bbb Z}(G,0)
=(-1)^{n(G)}|\mathcal{O}_\TC(G)|$. According to
Theorem~\ref{Integral-Thm}(d), we see that $\kappa_{\Bbb Z}(G;1,0)
=(-1)^{n(G)}|\mathcal{O}_\TC(G)|$ and $\kappa_{\Bbb Z}(G;0,1)
=(-1)^{r(G)}|\mathcal{O}_\AC(G)|$. Notice the Reciprocity Laws on
$\tau_{\Bbb Z},\bar\tau_{\Bbb Z}$ and on $\varphi_{\Bbb
Z},\bar\varphi_{\Bbb Z}$. Again according to
Theorem~\ref{Integral-Thm}(d), we see that $\bar\kappa_{\Bbb
Z}(G;-1,0) =|\mathcal{O}_\TC(G)|$ and $\bar\kappa_{\Bbb Z}(G;0,-1)
=|\mathcal{O}_\AC(G)|$.

Since $\kappa_\rho(G;1,1)=0$ for $\rho\in{\mathcal O}(G)$, then
$\bar\kappa_\rho(G;-1,-1)=0$ for $\rho\in{\mathcal O}(G)$ by
(\ref{Local-Reciprocity-Law}). Hence by (\ref{Int-Kappa-Decom}) and
(\ref{Int-Bar-Kappa-Decom}), we have $\kappa_{\Bbb
Z}(G;1,1)=\bar\kappa_{\Bbb Z}(G;-1,-1)=0$.

(b) If $f$ is an integer-valued tension of a digraph $(G,\rho)$ such
that $0<f(e)<2$ for all $e\in E$, then $f\equiv 1$. This means that
$(G,\rho)$ is an edge-disjoint union of directed bonds, i.e., a
locally directed cut. Thus $\tau_{\Bbb Z}(G,2)=|{\mathcal
O}_\CU(G)|$ by (\ref{Int-Kappa-Decom}), and subsequently,
$\bar\tau_{\Bbb Z}(G,-2)=(-1)^{r(G)}|{\mathcal O}_\CU(G)|$.
According to Theorem~\ref{Integral-Thm}(d), we have $\kappa_{\Bbb
Z}(G;2,1)=|\mathcal{O}_\CU(G)|$ and $\bar\kappa_{\Bbb
Z}(G;-2,-1)=(-1)^{r(G)}|\mathcal{O}_\CU(G)|$.

If $g$ is an integer-valued flow of a digraph $(G,\rho)$ such that
$0<g(e)<2$ for $e\in E$, then $g\equiv 1$. This means that
$(G,\rho)$ is an edge-disjoint union of directed circuits, i.e., a
directed Eulerian graph. Thus $\varphi_{\Bbb Z}(G,2)=|{\mathcal
O}_\EU(G)|$ by (\ref{Int-Kappa-Decom}), and subsequently,
$\bar\varphi_{\Bbb Z}(G,-2)=(-1)^{n(G)}|{\mathcal O}_\EU(G)|$.
According to Theorem~\ref{Integral-Thm}(d), we have $\kappa_{\Bbb
Z}(G;1,2)=|\mathcal{O}_\EU(G)|$ and $\bar\kappa_{\Bbb
Z}(G;-1,-2)=(-1)^{n(G)}|\mathcal{O}_\EU(G)|$.

Let $(f,g)$ be an integer-valued complementary tension-flow of
$(G,\rho)$ such that $0<f(e)<2$, $0<g(e)<2$ for $e\in E$. Then
$f(e)+g(e)=1$ for all $e\in E$. This means that $(G,\rho)$ is an
edge-disjoint union of a locally directed cut and a directed
Eulerian subgraph. Hence $\kappa_{\Bbb
Z}(G;2,2)=|\mathcal{O}_\CE(G)|$ by Equation~(\ref{Int-Kappa-Decom}).

(c) Note that $\bar\kappa_\rho(G;1,0)=\bar\tau_\rho(G,1)$ and
$\bar\kappa_\rho(G;0,1)=\bar\varphi_\rho(G,1)$. We then have
$\bar\kappa_\rho(G;1,0)=\#[\rho]_\CU$ by Proposition~6.8 in
\cite{Chen-I}, $\bar\kappa_\rho(G;0,1)=\#[\rho]_\EU$ by
Proposition~5.5 in \cite{Chen-II}, and
$\kappa_\rho(G;1,1)=\#[\rho]_\CE$ by Proposition~4.6. Now the
desired formulas follow immediately from
Equation~(\ref{Int-Bar-Kappa-Decom}).

\end{proof}

\section{Modular complementary polynomials}

Let $A,B$ be abelian groups of orders $|A|=p,|B|=q$. Recall the {\em
modular complementary polynomial}
\begin{equation}\label{Gamma-PQRS}
\kappa(G;p,q):=|K(G,\varepsilon;A,B)|.
\end{equation}
To find the relationship between $\kappa$ and $\kappa_{\Bbb Z}$, we
need an equivalence relation on $\mathcal{O}(G)$. Two orientations
$\rho,\sigma$ on $G$ are said to be {\em cut-Eulerian equivalent},
denoted $\rho\sim_\CE\sigma$, if the subgraph induced by the edge
subset
\[
E(\rho\neq\sigma)=\{e\in E: \rho(v,e)\neq\sigma(v,e),\mbox{$v$ is an
endvertex of $e$}\}
\]
is an edge-disjoint union of a locally directed cut and a directed
Eulerian subgraph of both digraphs $(G,\rho)$ and $(G,\sigma)$.

Let $[{\mathcal O}(G)]$ denote the set of cut-Eulerian equivalence
classes of ${\mathcal O}(G)$, and let $\Rep[\mathcal{O}(G)]$ be a
set of distinct representatives of cut-Eulerian equivalence classes
of ${\mathcal O}(G)$. For nonnegative integers $p,q$, recall the
counting function
\begin{align}
\bar\kappa(G;p,q) &=\#\{(\rho,f,g) :
\mbox{$\rho\in\Rep[\mathcal{O}(G)]$, $(f,g)$ is an
integer-valued ten-}\nonumber\\
&\hspace{4ex} \mbox{sion-flow of $(G,\rho)$ s.t. $0\leq f(e)\leq p$,
$0\leq g(e)\leq q$, $e\in E$}\}. \label{Mod-Bar-Kappa-Defn*}
\end{align}
It is clear that $\bar\kappa$ is a polynomial function of
nonnegative integers $p,q$, and is independent of the chosen set
$\mbox{\rm Rep}[\mathcal{O}(G)]$ of distinct representatives, called
the {\em dual modular complementary polynomial} of $G$. We introduce
the topological sum
\[
[\widetilde{\Delta}_\CTF(G,\varepsilon)]
=\sum_{[\rho]\in[\mathcal{O}(G)]}\bar{\Delta}^\rho_\CTF(G,\varepsilon),
\]
defined as a disjoint union of copies of
$\bar\Delta^{\rho}_\CTF(G,\rho)$, one copy for each cut-Eulerian
equivalence class $[\rho]\in[{\mathcal O}(G)]$. Then
$\bar\kappa(G;p,q)$ counts the number of lattice points of
$(p,q)[\widetilde{\Delta}_\CTF(G,\varepsilon)]$.

Let ${\Bbb R}_p={\Bbb R}/p{\Bbb Z}$ and ${\Bbb R}_q={\Bbb R}/q{\Bbb
Z}$ for positive integers $p,q$. There is an obvious homomorphism
$\Mod_{p,q}: ({\Bbb R}\times {\Bbb R})^E \rightarrow ({\Bbb
R}_p\times {\Bbb R}_q)^E$, defined for $(f,g)\in({\Bbb R}\times{\Bbb
R})^E$ by
\begin{equation}\label{Mod-Map}
\Mod_{p,q}(f,g)(e)=(f(e)\: \mod p,\; g(e)\:\mod q),\sp e\in E.
\end{equation}
For two orientations $\rho,\sigma\in\mathcal{O}(G)$ and an edge
subset $S\subseteq E$, there is an involution
$Q^p_{\rho,\sigma,S}:[0,p]^E\rightarrow[0,p]^E$ defined by
\begin{equation}
(Q^{p}_{\rho,\sigma,S}f)(e)=\left\{\begin{array}{rl} p-f(e) &
\mbox{if $e\in S$, $\rho(v,e)\neq\sigma(v,e)$,}\\
f(e) & \mbox{otherwise,}
\end{array}\right.
\end{equation}
where $f\in[0,p]^E$, $e\in E$, and $v$ is an endvertex of $e$. To
find the relationship holding among the polytopes
$p\bar\Delta_\TN(G,\rho)\times q\bar\Delta_\FL(G,\rho)$ with
$\rho\in {\mathcal O}(G)$, we need the involution
\[
Q^{p,q}_{\rho,\sigma}:([0,p]\times [0,q])^E\rightarrow ([0,p]\times
[0,q])^E,
\]
defined for $(f,g)\in([0,p]\times[0,q])^E$ by
\begin{equation}\label{Qpq}
Q^{p,q}_{\rho,\sigma}(f,g) =(Q^{p}_{\rho,\sigma,B_\sigma}f,
Q^{q}_{\rho,\sigma,C_\sigma}g).
\end{equation}
Be care of that the subscript of $B_\sigma$ and $C_\sigma$ in
(\ref{Qpq}) should match the subscript of $Q^{p,q}_{\rho,\sigma}$
and be understood as their edge subsets. We establish the following
lemmas.

\begin{lemma}\label{Cut-Eulerian-BC}
Two orientations $\rho,\sigma\in{\mathcal O}(G)$ are cut-Eulerian
equivalent if and only if
\begin{enumerate}
\item[(a)] $E(B_\rho)=E(B_\sigma)$, $E(C_\rho)=E(C_\sigma)$;

\item[(b)] $E(B_\rho)\cap
E(\rho\neq\sigma)$ is a locally directed cut of $(G,\rho)$;

\item[(c)] $E(C_\rho)\cap E(\rho\neq\sigma)$ is a
directed Eulerian subgraph of $(G,\rho)$.
\end{enumerate}
\end{lemma}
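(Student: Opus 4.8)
The plan is to prove both directions by reducing everything to one elementary observation: in any digraph a directed circuit and a directed cut never share an edge. Concretely, if $Z$ is a directed circuit, written as a closed directed walk $v_0\to v_1\to\cdots\to v_m=v_0$, and $W=[S,S^c]$ is a cut all of whose arrows point from $S$ to $S^c$, set $\pi(v)=1$ for $v\in S$ and $\pi(v)=0$ for $v\in S^c$; telescoping gives $\sum_{i=1}^m(\pi(v_i)-\pi(v_{i-1}))=0$, and a step of $Z$ along an edge of $W$ contributes $-1$, so some step contributes $+1$, i.e. traverses an edge of $W$ from $S^c$ to $S$ against its arrow, a contradiction. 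A directed bond is a particular directed cut, and a locally directed cut is an edge-disjoint union of directed bonds, so this says precisely that the edges lying on directed circuits of $(G,\varepsilon)$ are disjoint from the edges lying on locally directed cuts of $(G,\varepsilon)$ — in line with Lemma~\ref{EBC}.

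From this I would extract two invariance statements. First, if $\sigma$ is obtained from $(G,\rho)$ by reversing a single directed bond $W$, then $E(C_\rho)=E(C_\sigma)$ and $E(B_\rho)=E(B_\sigma)$: every directed circuit of $(G,\rho)$ avoids $W$ by the observation above, hence is unchanged in $(G,\sigma)$ and stays a directed circuit there, and symmetrically (reverse $W$ again), so $E(C_\rho)=E(C_\sigma)$, and then $E(B_\rho)=E\setminus E(C_\rho)=E\setminus E(C_\sigma)=E(B_\sigma)$ by Lemma~\ref{EBC}. Dually, reversing a single directed circuit $Z$ preserves $E(B_\bullet)$ (each directed bond avoids $Z$) and hence $E(C_\bullet)$. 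Reversing a locally directed cut is the composite of reversing its constituent directed bonds one at a time, and reversing a directed Eulerian subgraph is the composite of reversing its constituent directed circuits one at a time; each constituent remains a directed bond, resp. directed circuit, after the others are reversed, since that is a property of the orientation on its own edges only. So both invariance statements pass to these two operations.

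For the forward direction I would take $\rho\sim_\textsc{ce}\sigma$ and write the subgraph induced by $D:=E(\rho\neq\sigma)$ as a disjoint union $U\sqcup H$ of a locally directed cut $U$ and a directed Eulerian subgraph $H$ of $(G,\rho)$. Every directed bond is contained in $B_\rho$ and every directed circuit in $C_\rho$, so $E(U)\subseteq E(B_\rho)$ and $E(H)\subseteq E(C_\rho)$; since $E=E(B_\rho)\sqcup E(C_\rho)$ by Lemma~\ref{EBC}, these inclusions force $E(U)=D\cap E(B_\rho)$ and $E(H)=D\cap E(C_\rho)$, which is exactly (b) and (c) (and incidentally shows the decomposition of $D$ is unique). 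For (a), reverse $U$ first and then $H$ — legitimate because $H$ is untouched by, and remains a directed Eulerian subgraph after, the reversal of $U$ — and apply the two invariance statements at each stage to get $E(B_\rho)=E(B_\sigma)$ and $E(C_\rho)=E(C_\sigma)$.

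For the converse, assume (a)–(c) (only (b) and (c) are actually used). Put $U:=E(B_\rho)\cap D$ and $H:=E(C_\rho)\cap D$; by (b) and (c) these are a locally directed cut and a directed Eulerian subgraph of $(G,\rho)$, they are edge-disjoint by Lemma~\ref{EBC}, and $E(U)\sqcup E(H)=(E(B_\rho)\sqcup E(C_\rho))\cap D=D$. On $D$ the orientation $\sigma$ is the reversal of $\rho$, and the reversal of a locally directed cut is a locally directed cut while the reversal of a directed Eulerian subgraph is a directed Eulerian subgraph, so this same decomposition witnesses the required structure for $(G,\sigma)$ too; hence $\rho\sim_\textsc{ce}\sigma$. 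The only genuinely non-formal step in the whole argument is the invariance of the pair $(E(B_\bullet),E(C_\bullet))$ under the two reversal operations, which is why I would isolate the circuit–cut disjointness observation at the outset; the rest is bookkeeping with Lemma~\ref{EBC}.
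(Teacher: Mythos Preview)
Your proof is correct and follows the same overall plan as the paper: obtain (b) and (c) from the circuit--cut disjointness together with Lemma~\ref{EBC}, then deduce (a) by showing that reversing the two pieces of $E(\rho\neq\sigma)$ leaves $E(C_\bullet)$ (equivalently $E(B_\bullet)$) unchanged. The one genuine difference is in how (a) is argued: the paper reverses only the Eulerian part $C$ to reach an intermediate $\rho'$ and then observes that $C_{\rho'}$ and $C_\sigma$ are both supported on the agreement set $E(\rho'=\sigma)=E\setminus B$, forcing $C_{\rho'}=C_\sigma$; you instead isolate invariance of $(E(B_\bullet),E(C_\bullet))$ under reversal of a single directed bond or directed circuit and then compose. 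Your dualized version of the circuit-reversal step---showing $E(B_\bullet)$ is preserved because every directed bond avoids the reversed circuit, and then invoking Lemma~\ref{EBC}---is a tidy way to avoid reasoning about how directed circuits of $(G,\rho)$ might intersect the reversed circuit, which the paper's one-line appeal to ``strong connectedness'' glosses over.
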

\begin{proof}
The sufficiency is trivial, since $E=E(B_\rho)\sqcup E(C_\rho)$ by
Lemma~\ref{EBC}. For necessity, let us write
$E(\rho\neq\sigma)=B\sqcup C$, where $B$ is a locally directed cut
and $C$ a directed Eulerian subgraph of $(G,\rho)$. Clearly,
$C\subseteq E(C_\rho)$. Since each directed circuit of $C_\rho$ is
edge-disjoint from any directed bond contained in the locally
directed cut $(B,\rho)$ of $(G,\rho)$, then $B\cap
E(C_\rho)=\emptyset$; subsequently, $B\subseteq E(B_\rho)$. Thus we
must have $B=E(B_\rho)\cap E(\rho\neq\sigma)$ and $C=E(C_\rho)\cap
E(\rho\neq\sigma)$. Now it follows that (b) and (c) are
automatically true.

To prove (a), let $\rho'$ be an orientation on $G$ obtained from
$\rho$ by reversing the orientations on the edges of $C$. Then
$B=E(\rho'\neq\sigma)$ and $E-B=E(\rho'=\sigma)$. Since reversing
the orientations on any strongly connected subdigraph does not
change the strong connectedness, we have $E(C_{\rho'})=E(C_\rho)$.
Note that $C_{\rho'}$ and $C_\sigma$ are edge-disjoint from $B$.
This means that $C_{\rho'}$ and $C_\sigma$ are contained in
$E(\rho'=\sigma)$. Hence $C_{\rho'}=C_\sigma$. Therefore
$E(C_\rho)=E(C_\sigma)$, and subsequently, $E(B_\rho)=E(B_\sigma)$.
\end{proof}

\begin{lemma}\label{cut-Euler-P}
{\rm (a)} The relation $\sim_{\CE}$ is indeed an equivalence
relation on ${\mathcal O}(G)$.

{\rm (b)} Let $\rho,\sigma\in{\mathcal O}(G)$ and
$\rho\sim_\CE\sigma$. If $B_\rho$ is a locally directed cut, so is
$B_\sigma$. If $C_\rho$ is a directed Eulerian graph, so is
$C_\sigma$.

{\rm (c)} Let $\rho,\sigma\in{\mathcal O}(G)$ and
$\rho\sim_\CE\sigma$. The restriction
\[
\hspace{1.5cm}
Q_{\rho,\sigma}^{p,q}:p\bar\Delta_\TN(G,B_\sigma)\times
q\bar\Delta_\FL(G,C_\sigma)\rightarrow
p\bar\Delta_\TN(G,B_\rho)\times q\bar\Delta_\FL(G,C_\rho)
\]
is a bijection, sending lattice points to lattice points
bijectively.

{\rm (d)} $\kappa_\rho(G;x,y) =\kappa_\sigma(G;x,y)$ if
$\rho\sim_\CE\sigma$.
\end{lemma}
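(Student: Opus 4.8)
The plan is to treat the four parts in sequence, using Lemma~\ref{Cut-Eulerian-BC} to split every statement into a ``$B$-part'' governed by cut-equivalence and a ``$C$-part'' governed by Eulerian-equivalence, and to invoke the cut- and Eulerian-equivalence theory of \cite{Chen-I,Chen-II} for the one-piece facts. For \emph{(a)}, reflexivity is immediate ($E(\rho\neq\rho)=\emptyset$) and symmetry is built into the definition (the defining condition is required of both $(G,\rho)$ and $(G,\sigma)$). For transitivity, suppose $\rho\sim_\textsc{ce}\sigma\sim_\textsc{ce}\tau$. By Lemma~\ref{Cut-Eulerian-BC}(a) the sets $E(B_\rho),E(C_\rho)$ are common to all three orientations, and since every orientation takes value $\pm1$ on a given edge one has $E(\rho\neq\tau)=E(\rho\neq\sigma)\,\triangle\,E(\sigma\neq\tau)$ (symmetric difference). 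Intersecting this with $E(B_\rho)$ and with $E(C_\rho)$ separately, I would pass to $G/C_\rho$ (where $\rho,\sigma,\tau$ restrict and differ pairwise by locally directed cuts) and to $G|C_\rho$ (where they differ pairwise by directed Eulerian subgraphs), and use that $\sim_\textsc{cu}$ and $\sim_\textsc{eu}$ are equivalence relations: this gives that $E(B_\rho)\cap E(\rho\neq\tau)$ is a locally directed cut and $E(C_\rho)\cap E(\rho\neq\tau)$ a directed Eulerian subgraph of $(G,\rho)$, so $\rho\sim_\textsc{ce}\tau$ by Lemma~\ref{Cut-Eulerian-BC}. Part \emph{(b)} is read off the same way: since $E(B_\rho)=E(B_\sigma)$, $E(C_\rho)=E(C_\sigma)$, and $\rho,\sigma$ differ on $E(B_\rho)$ by a locally directed cut and on $E(C_\rho)$ by a directed Eulerian subgraph, the property of being a disjoint union of directed bonds (resp.\ of directed circuits) is transported from $(B_\rho,\rho)$ to $(B_\sigma,\sigma)$ (resp.\ from $(C_\rho,\rho)$ to $(C_\sigma,\sigma)$), using that these classes are preserved under reversing a sub-cut (resp.\ a sub-Eulerian subgraph) by \cite{Chen-I,Chen-II}.

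For \emph{(c)}, the heart of the lemma, set $B:=E(B_\rho)\cap E(\rho\neq\sigma)$ and $C:=E(C_\rho)\cap E(\rho\neq\sigma)$, which by Lemma~\ref{Cut-Eulerian-BC} are a locally directed cut and a directed Eulerian subgraph of $(G,\rho)$, and recall $E(B_\rho)=E(B_\sigma)$, $E(C_\rho)=E(C_\sigma)$. The key observation is the factorization $Q^{p}_{\rho,\sigma,B_\sigma}f=P_{\rho,\sigma}f+p\,\mathbbm{1}_B$: on an edge of $B$ this sends $f(e)$ to $-f(e)+p=p-f(e)$, and off $B$ it is the identity (on $C$ one has $f(e)=0$ because $f|_{C_\sigma}=0$). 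Now $P_{\rho,\sigma}$ carries tensions of $(G,\sigma)$ to tensions of $(G,\rho)$, since if $f=\delta_\sigma u$ then $P_{\rho,\sigma}f=\delta_\rho u$ (reversing an edge negates the tension value), and $\mathbbm{1}_B$ is itself a tension of $(G,\rho)$, being $\delta_\rho$ of a suitable $\pm1$-combination of the vertex indicators of the directed bonds composing $B$; hence $Q^{p}_{\rho,\sigma,B_\sigma}$ maps $p\bar\Delta_\textsc{tn}(G,B_\sigma)$ into tensions of $(G,\rho)$ supported on $E(B_\rho)$ with values in $[0,p]$, i.e.\ into $p\bar\Delta_\textsc{tn}(G,B_\rho)$, and it obviously preserves integrality. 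The flow factor is symmetric: $Q^{q}_{\rho,\sigma,C_\sigma}g=P_{\rho,\sigma}g+q\,\mathbbm{1}_C$, where $\mathbbm{1}_C$ is a flow of $(G,\rho)$ because a directed Eulerian subgraph has equal in- and out-degree at every vertex, so $Q^{q}_{\rho,\sigma,C_\sigma}$ carries $q\bar\Delta_\textsc{fl}(G,C_\sigma)$ into $q\bar\Delta_\textsc{fl}(G,C_\rho)$ preserving lattice points. Finally $Q^{p,q}_{\rho,\sigma}$ is an involution ($p-(p-f(e))=f(e)$) and $Q^{p,q}_{\sigma,\rho}$ flips exactly the same edges, so $Q^{p,q}_{\rho,\sigma}$ is the asserted lattice-point-preserving bijection.

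For \emph{(d)}, the same map $Q^{p,q}_{\rho,\sigma}$ restricts to the relatively open dilates $p\Delta^+_\textsc{tn}(G,B_\sigma)\times q\Delta^+_\textsc{fl}(G,C_\sigma)\to p\Delta^+_\textsc{tn}(G,B_\rho)\times q\Delta^+_\textsc{fl}(G,C_\rho)$, since $f(e)\mapsto p-f(e)$ preserves the strict inequality $0<f(e)<p$; combined with the product decomposition $\Delta^{+}_{\textsc{ctf}}(G,\rho)=\Delta^+_\textsc{tn}(G,B_\rho)\times\Delta^+_\textsc{fl}(G,C_\rho)$ of (\ref{Positive-Delta-Product-Decom}) and Lemma~\ref{Delta-CTF-Decomposition}(c), this yields a lattice-point-preserving bijection $(p,q)\Delta^+_\textsc{ctf}(G,\sigma)\to(p,q)\Delta^+_\textsc{ctf}(G,\rho)$, whence $\kappa_\sigma(G;p,q)=\kappa_\rho(G;p,q)$ for all positive integers $p,q$ and therefore $\kappa_\rho(G;x,y)=\kappa_\sigma(G;x,y)$ as polynomials. (Alternatively, part (c) gives $\bar\kappa_\rho=\bar\kappa_\sigma$, and the reciprocity law (\ref{Local-Reciprocity-Law}) together with $|E(C_\rho)|=|E(C_\sigma)|$ from Lemma~\ref{Cut-Eulerian-BC} transfers this to $\kappa$.) I expect the only genuinely delicate point to be the bookkeeping in (c): verifying that the piecewise-defined involution $Q^{p}_{\rho,\sigma,B_\sigma}$ really lands among tensions of $(G,\rho)$ --- via the clean split into the linear sign-twist $P_{\rho,\sigma}$ and the translation by the tension $p\,\mathbbm{1}_B$ --- together with the parallel fact that $\mathbbm{1}_C$ is a flow; everything else is routine once Lemma~\ref{Cut-Eulerian-BC} is in hand.
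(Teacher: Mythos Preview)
Your proof is correct. Parts (a), (b), and (d) proceed essentially as the paper does: split via Lemma~\ref{Cut-Eulerian-BC} into a $B$-piece and a $C$-piece and quote the cut- and Eulerian-equivalence theory of \cite{Chen-I,Chen-II}; for (d) the paper simply says ``consequence of (c)'', and both of your routes (restricting $Q^{p,q}_{\rho,\sigma}$ to the open dilates, or passing through $\bar\kappa_\rho$ and reciprocity) are valid justifications of that one line.

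Part (c) is where your argument genuinely differs from the paper's. The paper checks directly that $Q^{p}_{\rho,\sigma,B_\sigma}f$ is a tension of $(G,\rho)$ by taking an arbitrary directed circuit $(C,\varepsilon_\textsc{c})$ and computing the signed sum $\sum_{e\in C}[\rho,\varepsilon_\textsc{c}](e)\,(Q^{p}_{\rho,\sigma,B_\sigma}f)(e)$, splitting it over $C\cap B'$ and its complement, and reducing to $\sum_{e\in C}[\sigma,\varepsilon_\textsc{c}](e)f(e)-p\sum_{e\in C\cap B'}[\sigma,\varepsilon_\textsc{c}](e)=0$, the second sum vanishing because a circuit meets a locally directed cut in a balanced way; and dually for flows with bonds. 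Your approach replaces this computation by the structural factorization $Q^{p}_{\rho,\sigma,B_\sigma}f=P_{\rho,\sigma}f+p\,\mathbbm{1}_{B}$ (valid on $p\bar\Delta_\textsc{tn}(G,B_\sigma)$ since $f|_{C_\sigma}=0$), together with the observations that $P_{\rho,\sigma}$ intertwines $T(G,\sigma)$ with $T(G,\rho)$ and that $\mathbbm{1}_{B}$ is itself a tension of $(G,\rho)$ because a locally directed cut is a disjoint union of directed bonds; and dually $\mathbbm{1}_{C}$ is a flow because a directed Eulerian subgraph has zero boundary. This is a cleaner, more conceptual packaging: it makes transparent that $Q^{p,q}_{\rho,\sigma}$ is an affine unimodular map whose linear part is the canonical sign-twist and whose translation lies in the tension-flow lattice, which immediately gives both well-definedness and the lattice-point bijection. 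The paper's computation, on the other hand, is self-contained and does not require separately arguing that $\mathbbm{1}_B$ is a tension (that fact is effectively re-proved inside the circuit-sum calculation). Either way the content is the same; your decomposition just names the pieces.
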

\begin{proof}
(a) Let $\varepsilon_i\in\mathcal{O}(G)$ ($i=1,2,3$) be orientations
such that $\varepsilon_1\sim_\CE\varepsilon_2$ and
$\varepsilon_2\sim_\CE\varepsilon_3$. Set $B:=E(B_{\varepsilon_i})$
and $C:=E(C_{\varepsilon_i})$ by Lemma~\ref{Cut-Eulerian-BC}. Then
$\varepsilon_1\sim_\CU\varepsilon_2$,
$\varepsilon_2\sim_\CU\varepsilon_3$ on $B$, and
$\varepsilon_1\sim_\EU\varepsilon_2$,
$\varepsilon_2\sim_\EU\varepsilon_3$ on $C$. Thus
$\varepsilon_1\sim_\CU\varepsilon_3$ on $B$ by Lemma~6.5(a) of
\cite{Chen-I}, and $\varepsilon_1\sim_\CU\varepsilon_3$ on $C$ by
Lemma~5.1(a) of \cite{Chen-II}. Hence
$\varepsilon_1\sim_\CE\varepsilon_3$.

(b) It follows from the fact that the subdigraph
$B_\rho|E(\rho=\sigma)$ is a directed cut and that
$C_\rho|E(\rho=\sigma)$ is a directed Eulerian subgraph of both
$(G,\rho)$ and $(G,\sigma)$.

(c) Let $E(\rho\neq\sigma)=B'\sqcup C'$, where $B'$ is a locally
directed cut and $C'$ a directed Eulerian subgraph with both
orientations $\rho$ and $\sigma$. Let $f\in
p\bar\Delta_\TN(G,\sigma)$ and $g\in q\bar\Delta_\FL(G,\sigma)$. We
need to show that $Q^p_{\rho,\sigma,B_\sigma}f\in
p\bar\Delta_\TN(G,\rho)$ and $Q^q_{\rho,\sigma,C_\sigma}g\in
q\bar\Delta_\FL(G,\rho)$. In fact, given an arbitrary directed
circuit $(C,\varepsilon_\CU)$ and directed bond $(B,\varepsilon_B)$
of $(G,\sigma)$. We have
\[
\sum_{e\in C}[\sigma,\varepsilon_\CU](e)f(e)=0, \quad \sum_{e\in
B}[\sigma,\varepsilon_B](e)g(e)=0.
\]
Then $I:=\sum_{e\in C}[\rho,\varepsilon_\CU](e)
(Q^p_{\rho,\sigma,B_\sigma}f)(e)$ can be written as
\begin{eqnarray*}
I &=& \sum_{e\in C\cap(C'\cup
E(\rho=\sigma))}[\rho,\varepsilon_\CU](e)f(e) +\sum_{e\in
C\cap B'}[\rho,\varepsilon_\CU](e)(p-f(e))\\
&=& \sum_{e\in C}[\sigma,\varepsilon_\CU](e)f(e) -p\sum_{e\in C\cap
B'}[\sigma,\varepsilon_\CU](e)=0.
\end{eqnarray*}
The second sum equal to zero follows from the fact that
$(B',\sigma)$ is a locally directed cut of $(G,\sigma)$. Hence
$Q^p_{\rho,\sigma,B_\sigma}f\in p\bar\Delta_\TN(G,\rho)$.
Analogously,
\[
J:=\sum_{e\in
B}[\rho,\varepsilon_B](e)(Q^q_{\rho,\sigma,C_\sigma}g)(e)
\]
can be written as
\begin{eqnarray*}
J &=& \sum_{e\in B\cap(B'\cup
E(\rho=\sigma))}[\rho,\varepsilon_B](e)g(e) +\sum_{e\in
B\cap C'}[\rho,\varepsilon_B](e)(q-g(e))\\
&=& \sum_{e\in B}[\sigma,\varepsilon_B](e)g(e) -q\sum_{e\in B\cap
C'}[\sigma,\varepsilon_B](e)=0.
\end{eqnarray*}
The second sum equal to zero follows from the fact that
$(C',\sigma)$ is a directed Eulerian subgraph of $(G,\sigma)$.
Therefore $Q^q_{\rho,\sigma,C_\sigma}f\in q\bar\Delta_\FL(G,\rho)$.
It is clear that $Q_{\rho,\sigma}^{p,q} =Q_{\rho,\sigma,B_\sigma}^p
\times Q_{\rho,\sigma,C_\sigma}^q$ and sends lattice points
bijectively.

(d) It is a consequence of (c).
\end{proof}

For a real-valued function $f:E\rightarrow\Bbb R$ and an orientation
$\rho$ on $G$, we associate an orientation $\rho_f$ on $G$, defined
for each $(v,e)\in V\times E$ by
\begin{equation}\label{Orientation-Function}
\rho_f(v,e)=\left\{\begin{array}{rl} \rho(v,e)
&\mbox{if $f(e)>0$,}\\
-\rho(v,e) &\mbox{if $f(e)\leq 0$.}
\end{array}\right.
\end{equation}
Conversely, for orientations $\rho,\sigma\in{\mathcal O}(G)$, we
associate a {\em symmetric difference function}
$I_{\rho,\sigma}:E\rightarrow\{0,1\}$, defined for each edge $e\in
E$ (at its one endvertex $v$) by
\begin{equation}
I_{\rho,\sigma}(e)=\left\{\begin{array}{rl} 1
&\mbox{if $\rho(v,e)\neq \sigma(v,e)$,}\\
0 & \mbox{if $\rho(v,e)=\sigma(v,e)$}.
\end{array}\right. \label{Indicator-Orientation}
\end{equation}

\begin{lemma}
Let $(f_i,g_i)\in p\Delta_\TN(G,B_\varepsilon)\times
q\Delta_\FL(G,C_\varepsilon)$, $i=1,2$. If
\[
(f_1,g_1)\equiv (f_2,g_2)~\mod (p, q),
\]
then $\varepsilon_{f_1+g_1}$ and $\varepsilon_{f_2+g_2}$ are
cut-Eulerian equivalent.
\end{lemma}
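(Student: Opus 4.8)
The plan is to set $\rho:=\varepsilon_{f_1+g_1}$, $\sigma:=\varepsilon_{f_2+g_2}$ and verify the three conditions of Lemma~\ref{Cut-Eulerian-BC} for the pair $\rho,\sigma$. The first step is to describe these orientations explicitly. Since $f_i$ vanishes on $E(C_\varepsilon)$ and $g_i$ vanishes on $E(B_\varepsilon)$, the function $f_i+g_i$ equals $f_i$ on $E(B_\varepsilon)$, equals $g_i$ on $E(C_\varepsilon)$, and is nowhere zero; hence $\rho$ (resp.\ $\sigma$) is obtained from $\varepsilon$ by reversing precisely the edges of $E(B_\varepsilon)$ on which $f_1$ (resp.\ $f_2$) is negative together with the edges of $E(C_\varepsilon)$ on which $g_1$ (resp.\ $g_2$) is negative. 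Consequently $[\rho,\varepsilon]f_1$ is a tension of $(G,\rho)$ that is strictly positive on $E(B_\varepsilon)$ and zero on $E(C_\varepsilon)$, and $[\rho,\varepsilon]g_1$ is a flow of $(G,\rho)$ strictly positive on $E(C_\varepsilon)$ and zero on $E(B_\varepsilon)$; similarly for $\sigma$.

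The second step is to bring in the congruence. On $E(B_\varepsilon)$ the bounds $|f_1|,|f_2|<p$ together with $f_1\equiv f_2\pmod p$ force $f_1-f_2$ to take only the values $0,\pm p$ edgewise, with $(f_1-f_2)(e)=\pm p$ occurring exactly where $f_1(e)$ and $f_2(e)$ have opposite signs, i.e.\ exactly where $\rho$ and $\sigma$ disagree; moreover $f_1-f_2$ vanishes on $E(C_\varepsilon)$. Thus, writing $B^{*}:=\supp(f_1-f_2)$, we get $B^{*}\subseteq E(B_\varepsilon)$ and $E(\rho\neq\sigma)\cap E(B_\varepsilon)=B^{*}$, and a routine edgewise sign check shows that $\phi:=\tfrac1p[\rho,\varepsilon](f_1-f_2)$ is a $\{0,1\}$-valued tension of $(G,\rho)$ with support $B^{*}$. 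Symmetrically, using $|g_1|,|g_2|<q$ and $g_1\equiv g_2\pmod q$, the set $C^{*}:=\supp(g_1-g_2)$ satisfies $C^{*}\subseteq E(C_\varepsilon)$ and $E(\rho\neq\sigma)\cap E(C_\varepsilon)=C^{*}$, and $\psi:=\tfrac1q[\rho,\varepsilon](g_1-g_2)$ is a $\{0,1\}$-valued flow of $(G,\rho)$ with support $C^{*}$. Hence $E(\rho\neq\sigma)=B^{*}\sqcup C^{*}$.

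The third step is to check (a)--(c). For (a): if a directed bond $D$ of $(G,\rho)$ met $E(C_\varepsilon)$, then summing the flow $[\rho,\varepsilon]g_1$ across $D$ in the direction of $D$ would give $0=\sum_{e\in D\cap E(C_\varepsilon)}([\rho,\varepsilon]g_1)(e)>0$, impossible; so $E(B_\rho)\subseteq E(B_\varepsilon)$, and dually (summing the tension $[\rho,\varepsilon]f_1$ around a directed circuit) $E(C_\rho)\subseteq E(C_\varepsilon)$. Since $E=E(B_\rho)\sqcup E(C_\rho)=E(B_\varepsilon)\sqcup E(C_\varepsilon)$ by Lemma~\ref{EBC}, both inclusions are equalities, and likewise for $\sigma$, which is (a). For (b): by (a) and disjointness, $E(B_\rho)\cap E(\rho\neq\sigma)=E(B_\varepsilon)\cap(B^{*}\sqcup C^{*})=B^{*}=\supp\phi$; writing $\phi=\delta_\rho h$ with $h$ integer-valued, $B^{*}$ is the disjoint union over half-integers $t$ of the cuts $[\{v:h(v)>t\},\{v:h(v)\le t\}]$, each of which is directed (all crossing arrows run toward the lower-$h$ side, since $\phi\ge 0$) and hence a disjoint union of directed bonds of $(G,\rho)$; so $B^{*}$ is a locally directed cut of $(G,\rho)$. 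For (c): similarly $E(C_\rho)\cap E(\rho\neq\sigma)=C^{*}=\supp\psi$, and $\partial_\rho\psi=0$ with $\psi$ $\{0,1\}$-valued says that the in-degree equals the out-degree at every vertex of $(C^{*},\rho|_{C^{*}})$, so $C^{*}$ is a directed Eulerian subgraph of $(G,\rho)$. Lemma~\ref{Cut-Eulerian-BC} then yields $\rho\sim_\textsc{ce}\sigma$.

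I expect the subtle point to be (a): one must confirm that reorienting along $\varepsilon_{f_i+g_i}$ leaves the partition $E=E(B_\varepsilon)\sqcup E(C_\varepsilon)$ intact, and this uses crucially the strict positivity of $[\rho,\varepsilon]f_1$ on $E(B_\varepsilon)$ and of $[\rho,\varepsilon]g_1$ on $E(C_\varepsilon)$ — which is exactly where membership of $(f_i,g_i)$ in $p\Delta_\textsc{tn}(G,B_\varepsilon)\times q\Delta_\textsc{fl}(G,C_\varepsilon)$ enters. Everything else is sign bookkeeping dictated by the congruence, the two elementary structural facts that the support of a $\{0,1\}$-tension is a locally directed cut and that of a $\{0,1\}$-flow is a directed Eulerian subgraph, and an application of Lemmas~\ref{EBC} and~\ref{Cut-Eulerian-BC}.
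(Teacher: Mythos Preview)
Your proof is correct. The paper's own argument is much shorter because it outsources the two halves to companion papers: it invokes Lemma~6.6 of \cite{Chen-I} to conclude that $\varepsilon_{f_1}\sim_\textsc{cu}\varepsilon_{f_2}$ from $f_1\equiv f_2\pmod p$, invokes Lemma~5.3 of \cite{Chen-II} to conclude that $\varepsilon_{g_1}\sim_\textsc{eu}\varepsilon_{g_2}$ from $g_1\equiv g_2\pmod q$, and then simply observes that $E(\varepsilon_{f_1+g_1}\neq\varepsilon_{f_2+g_2})$ splits as $E(\varepsilon_{f_1}\neq\varepsilon_{f_2})\sqcup E(\varepsilon_{g_1}\neq\varepsilon_{g_2})$.

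Your route is genuinely different in that it is self-contained: you unpack those two black boxes by explicitly constructing the $\{0,1\}$-tension $\phi=\tfrac1p[\rho,\varepsilon](f_1-f_2)$ and the $\{0,1\}$-flow $\psi=\tfrac1q[\rho,\varepsilon](g_1-g_2)$ of $(G,\rho)$, and then verifying conditions (a)--(c) of Lemma~\ref{Cut-Eulerian-BC} directly. Your step~(a), showing $E(B_\rho)=E(B_\varepsilon)$ and $E(C_\rho)=E(C_\varepsilon)$ via the positivity of $[\rho,\varepsilon]f_1$ on $B_\varepsilon$ and of $[\rho,\varepsilon]g_1$ on $C_\varepsilon$, is a fact the paper never states in this proof (it is hidden inside the cited lemmas). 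The level-set decomposition of $\supp\phi$ into directed cuts and the in/out-degree reading of $\supp\psi$ are exactly the content of those external lemmas, so what you have effectively done is reprove them in situ. The paper's version is more modular; yours is more transparent and does not require the reader to chase references.
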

\begin{proof}
Note that $f_i(e)=0$ for $e\in E(C_\varepsilon)$, and that
$g_i(e)=0$ for $e\in E(B_\varepsilon)$. Then
$\varepsilon_{f_i}=\varepsilon_{f_i+g_i}$ on $E(B_\varepsilon)$, and
$\varepsilon_{g_i}=\varepsilon_{f_i+g_i}$ on $E(C_\varepsilon)$.
Since $f_1\equiv f_2\;\mod p$, then $\varepsilon_{f_1}$ and
$\varepsilon_{f_2}$ are cut equivalent by Lemma~6.6 of
\cite{Chen-I}. Analogously, since $g_1\equiv g_2\;\mod q$, then
$\varepsilon_{g_1}$ and $\varepsilon_{g_2}$ are Eulerian equivalent
by Lemma~5.3 of \cite{Chen-II}. Since
$E(\varepsilon_{f_1}\neq\varepsilon_{f_2})\subseteq
E(B_\varepsilon)$ and
$E(\varepsilon_{g_1}\neq\varepsilon_{g_2})\subseteq
E(C_\varepsilon)$, it follows that the edge set
$D:=E(\varepsilon_{f_1+g_1}\neq\varepsilon_{f_2+g_2})$ can be
written as
\begin{eqnarray*}
D &=& (D\cap B_\varepsilon)\sqcup (D\cap C_\varepsilon)\\
&=& (E(\varepsilon_{f_1}\neq \varepsilon_{f_2})\cap
E(B_\varepsilon))\sqcup (E(\varepsilon_{g_1}\neq
\varepsilon_{g_2})\cap E(C_\varepsilon)) \\
&=& E(\varepsilon_{f_1}\neq \varepsilon_{f_2})\sqcup
E(\varepsilon_{g_1}\neq \varepsilon_{g_2}),
\end{eqnarray*}
which is an edge-disjoint union of a locally directed cut and a
directed Eulerian subgraph with orientations
$\varepsilon_{f_i+g_i}$. Hence $\varepsilon_{f_1+g_1}$ and
$\varepsilon_{f_2+g_2}$ are cut-Eulerian equivalent.
\end{proof}

\begin{lemma}\label{Surjective}
Let $K_{\Bbb
Z}(G,\varepsilon;p,q)=(p,q)\Delta_\CTF(G,\varepsilon)\cap({\Bbb
Z}\times{\Bbb Z})^E$. The restriction
\[
\Mod_{p,q}: K_{\Bbb Z}(G,\varepsilon;p,q) \rightarrow
K(G,\varepsilon;{\Bbb Z}_p, {\Bbb Z}_q)
\]
is well-defined and surjective.
\end{lemma}
\begin{proof}
It is clear that $\Mod_{p,q}$ is well-defined. Let $(\tilde f,\tilde
g)\in K(G,\varepsilon;{\Bbb Z}_p, {\Bbb Z}_q)$, i.e., $\tilde f\in
T(G,\varepsilon;{\Bbb Z}_p)$, $\tilde g\in F(G,\varepsilon;{\Bbb
Z}_q)$, and $\supp\tilde g=\ker\tilde f$. Then by Lemma~6.1 of
\cite{Chen-I}, there exists an integer-valued $p$-tension $f$ of
$(G,\varepsilon)$ such that $\Mod_pf=\tilde f$, and by Lemma~5.3 of
\cite{Chen-II}, there exists an integer-valued $q$-flow $g$ of
$(G,\varepsilon)$ such that $\Mod_pg=\tilde g$. Moreover, $f(e)=0$
if and only if $\tilde f(e)=0$; $g(e)=0$ if and only if $\tilde
g(e)=0$. Thus $\ker f=\ker\tilde f$, $\supp g=\supp\tilde g$.
Therefore $\supp g=\ker f$. This means that $(f,g)\in K_{{\Bbb
Z}}(G,\varepsilon;p,q)$ and $\Mod_{p.q}(f,g)=(\tilde f,\tilde g)$.
We have proved the surjectivity of the map $\Mod_{p,q}$.
\end{proof}

\begin{lemma}\label{OPQ}
Let $\rho,\sigma,\omega\in{\mathcal O}(G)$ be cut-Eulerian
equivalent orientations, and let $(f,g)\in
(p,q)\Delta^\rho_\CTF(G,\varepsilon)$. Then
\begin{enumerate}
\item[(a)] $\varepsilon_{f+g}=\rho$, $f|_{C_\rho}=0$, $g|_{B_\rho}=0$.

\item[(b)] $P_{\varepsilon,\sigma}Q^{p,q}_{\sigma,\rho}
P_{\rho,\varepsilon}((p,q)\Delta^\rho_\CTF(G,\varepsilon))
 =(p,q)\Delta^\sigma_\CTF(G,\varepsilon)$.

\item[(c)] $P_{\varepsilon,\sigma}Q^{p,q}_{\sigma,\rho}
P_{\rho,\varepsilon}(f,g) =
P_{\varepsilon,\omega}Q^{p,q}_{\omega,\rho}
P_{\rho,\varepsilon}(f,g)$ if and only if $\sigma=\omega$.

\item[(d)] $K(G,\varepsilon;p,q)\cap\Mod_{p,q}^{-1}(\Mod_{p,q}(f,g))
=\{P_{\varepsilon,\alpha}Q^{p,q}_{\alpha,\rho}
P_{\rho,\varepsilon}(f,g):\alpha\sim_\CE\rho\}$.
\end{enumerate}
\end{lemma}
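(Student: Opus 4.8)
The plan is to prove the four parts of Lemma~\ref{OPQ} in the order listed, since each part feeds the next. Throughout write $B:=E(B_\rho)=E(B_\sigma)=E(B_\omega)$ and $C:=E(C_\rho)=E(C_\sigma)=E(C_\omega)$, which are common to all cut-Eulerian equivalent orientations by Lemma~\ref{Cut-Eulerian-BC}(a).

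\textbf{Part (a).} Let $(f,g)\in(p,q)\Delta^\rho_\textsc{ctf}(G,\varepsilon)$. Unwinding the definition of $\Delta^\rho_\textsc{ctf}$ and its dilation, we have $[\rho,\varepsilon](e)\,(f+g)(e)>0$ for every $e\in E$. I would split into the two cases of the sign convention in \eqref{Orientation-Function}: if $\rho(v,e)=\varepsilon(v,e)$ then $[\rho,\varepsilon](e)=1$, so $(f+g)(e)>0$, hence $\varepsilon_{f+g}(v,e)=\varepsilon(v,e)=\rho(v,e)$; if $\rho(v,e)\neq\varepsilon(v,e)$ then $[\rho,\varepsilon](e)=-1$, so $(f+g)(e)<0$, hence $\varepsilon_{f+g}(v,e)=-\varepsilon(v,e)=\rho(v,e)$. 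In both cases $\varepsilon_{f+g}=\rho$. This is routine bookkeeping with the coupling sign.

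\textbf{Part (b).} The strategy is to track the composite map factor by factor using the already-established lemmas. By Lemma~\ref{Delta-CTF-Decomposition}(b), $P_{\rho,\varepsilon}$ carries $(p,q)\Delta^\rho_\textsc{ctf}(G,\varepsilon)$ bijectively onto $(p,q)\Delta^+_\textsc{ctf}(G,\rho)$, which by Lemma~\ref{Delta-CTF-Decomposition}(c) equals $p\Delta^+_\textsc{tn}(G,B_\rho)\times q\Delta^+_\textsc{fl}(G,C_\rho)$; its closure is $p\bar\Delta_\textsc{tn}(G,B_\rho)\times q\bar\Delta_\textsc{fl}(G,C_\rho)$, but since $(f,g)$ ranges over the relatively open polytope we stay in its interior. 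Now apply $Q^{p,q}_{\sigma,\rho}=Q^p_{\sigma,\rho,B_\rho}\times Q^q_{\sigma,\rho,C_\rho}$: by Lemma~\ref{cut-Euler-P}(c) (with the roles of $\rho,\sigma$ as there, and using $\rho\sim_\textsc{ce}\sigma$) this is a bijection onto $p\bar\Delta_\textsc{tn}(G,B_\sigma)\times q\bar\Delta_\textsc{fl}(G,C_\sigma)=(p,q)\bar\Delta^+_\textsc{ctf}(G,\sigma)$, and it sends the relatively open part to the relatively open part because $Q$ is the affine involution $t\mapsto p-t$ (resp.\ $q-t$) on the flipped coordinates, which maps $(0,p)$ to $(0,p)$ and fixes the boundary. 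Finally $P_{\varepsilon,\sigma}$ carries $(p,q)\Delta^+_\textsc{ctf}(G,\sigma)$ back to $(p,q)\Delta^\sigma_\textsc{ctf}(G,\varepsilon)$ again by Lemma~\ref{Delta-CTF-Decomposition}(b). Composing the three bijections gives (b).

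\textbf{Parts (c) and (d).} For (c), I would argue that the composite $P_{\varepsilon,\sigma}Q^{p,q}_{\sigma,\rho}P_{\rho,\varepsilon}$ applied to a fixed $(f,g)$ produces a tension-flow $(f',g')$ with $\varepsilon_{f'+g'}=\sigma$ by part (a) applied in orientation $\sigma$ (using that the image lies in $(p,q)\Delta^\sigma_\textsc{ctf}(G,\varepsilon)$ by part (b)); similarly the $\omega$-composite produces one with $\varepsilon_{f''+g''}=\omega$. Hence equality of the two images forces $\sigma=\omega$; conversely $\sigma=\omega$ trivially gives equality. For (d), the inclusion $\supseteq$ follows because each $P_{\varepsilon,\alpha}Q^{p,q}_{\alpha,\rho}P_{\rho,\varepsilon}(f,g)$ lies in $(p,q)\Delta^\alpha_\textsc{ctf}(G,\varepsilon)\subseteq K(G,\varepsilon;p,q)$ by part (b), and $\Mod_{p,q}$ is unchanged: $P_{\rho,\varepsilon}$ and $P_{\varepsilon,\alpha}$ only flip signs of coordinates (which is invisible mod $p$ or $q$ once combined with the $Q$-flip $t\mapsto p-t\equiv -t$), so the composite preserves the residue class — this is the key sign-and-modulus computation and, together with the cut-Eulerian structure of $E(\rho\neq\alpha)$, is where I expect the main obstacle. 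For $\subseteq$, given $(h,k)\in K(G,\varepsilon;p,q)$ with $\Mod_{p,q}(h,k)=\Mod_{p,q}(f,g)$, set $\alpha:=\varepsilon_{h+k}$; the earlier lemma (the one showing $\varepsilon_{f_1+g_1}\sim_\textsc{ce}\varepsilon_{f_2+g_2}$ when $(f_1,g_1)\equiv(f_2,g_2)$, transported via $P_{\rho,\varepsilon}$) gives $\alpha\sim_\textsc{ce}\rho$, and then $(h,k)\in(p,q)\Delta^\alpha_\textsc{ctf}(G,\varepsilon)$ by part (a); a coordinatewise check (each coordinate of $P_{\rho,\varepsilon}(h,k)$ and of $Q^{p,q}_{\alpha,\rho}P_{\rho,\varepsilon}(f,g)$ lies in the same residue class in $[0,p]$ resp.\ $[0,q]$, hence they are equal) shows $(h,k)=P_{\varepsilon,\alpha}Q^{p,q}_{\alpha,\rho}P_{\rho,\varepsilon}(f,g)$. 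The main subtlety is exactly matching the sign flips of the $P$'s against the reflection built into $Q$ so that everything is consistent modulo $(p,q)$; the rest is assembling previously proven bijections.
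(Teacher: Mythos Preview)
Your proposal is correct. Parts (a) and (b) are essentially the paper's argument verbatim. Parts (c) and (d), however, proceed differently from the paper, and your route is arguably cleaner.

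For (c), the paper splits the equation into its tension and flow components and then cites Lemma~6.7(c) of \cite{Chen-I} and Lemma~5.4(c) of \cite{Chen-II} to conclude $\sigma=\omega$ on $B_\rho$ and on $C_\rho$ separately. Your argument is self-contained: by (b) the $\sigma$-composite lands in $(p,q)\Delta^\sigma_\textsc{ctf}(G,\varepsilon)$ and the $\omega$-composite in $(p,q)\Delta^\omega_\textsc{ctf}(G,\varepsilon)$, and these pieces are disjoint by Lemma~\ref{Delta-CTF-Decomposition}(a) unless $\sigma=\omega$ (equivalently, part (a) reads off the orientation from the image). This avoids the external references entirely.

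For (d), the paper again cites Lemma~6.7(d) of \cite{Chen-I} and Lemma~5.4(d) of \cite{Chen-II} to describe the $\Mod_p^{-1}$ and $\Mod_q^{-1}$ fibres for the tension and flow parts separately, then takes the product. Your direct approach works: for $\supseteq$ the edge-by-edge check that the composite preserves residues mod $(p,q)$ is exactly the computation you anticipate (on an edge where $\alpha\neq\rho$ the net effect is $f(e)\mapsto f(e)\pm p$); for $\subseteq$, one small point you should make explicit before invoking the earlier lemma is that $\supp h=\supp f$ and $\supp k=\supp g$ (forced by congruence and the bound $|h|,|f|<p$), so $E(B_\alpha)=E(B_\rho)$ and $E(C_\alpha)=E(C_\rho)$, which is what allows you to transport both pairs into $p\Delta_\textsc{tn}(G,B_\rho)\times q\Delta_\textsc{fl}(G,C_\rho)$ and apply that lemma with $\rho$ playing the role of the ambient orientation. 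After that, your coordinatewise uniqueness step (two elements of $(p,q)\Delta^+_\textsc{ctf}(G,\alpha)$ with the same residues must coincide, since each nonzero coordinate lies in an open interval of length $p$ or $q$) finishes it. The paper's approach buys brevity by outsourcing; yours buys independence from \cite{Chen-I,Chen-II}.
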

\begin{proof}
(a) Recall that $(f,g)\in\Delta^\rho_\CTF(G,\varepsilon)$ is
equivalent to $[\rho,\varepsilon](e)(f+g)(e)>0$ for all $e\in E$.
Then $(f+g)(e)>0$ if and only if $\rho(v,e)=\varepsilon(v,e)$ for an
endvertex $v$ of $e$. Since
$\varepsilon_{f+g}(v,e)=\varepsilon(v,e)$ if and only if
$(f+g)(e)>0$, we see that $\varepsilon_{f+g}=\rho$.

Note that $[\rho,\varepsilon](f,g)\in\Delta_\CTF^+(G,\rho)$ by
Lemma~\ref{Delta-CTF-Decomposition}(b). Then $f$ is nowhere zero on
$B_\rho$ and vanishes on $C_\rho$; and $g$ is nowhere zero on
$C_\rho$ and vanishes on $B_\rho$.

(b) Since $P_{\rho,\varepsilon}$, $P_{\varepsilon,\sigma}$, and
$Q^{p,q}_{\sigma,\rho}$ are involutions, it follows that the
composition $P_{\varepsilon,\rho} Q^{p,q}_{\rho,\sigma,S}
P_{\sigma,\varepsilon}$ is an involution. Note that
\[
P_{\rho,\varepsilon}((p,q)\Delta^\rho_\CTF(G,\varepsilon))
=(p,q)\Delta^+_\CTF(G,\rho),
\]
\[
Q^{p,q}_{\sigma,\rho}((p,q)\Delta^+_\CTF(G,\rho))
=(p,q)\Delta^+_\CTF(G,\sigma),
\]
and $P_{\sigma,\varepsilon}=P_{\varepsilon,\sigma}$. The desired
identity follows.

(c) The equation $P_{\varepsilon,\sigma}Q^{p,q}_{\sigma,\rho}
P_{\rho,\varepsilon}(f,g) =
P_{\varepsilon,\omega}Q^{p,q}_{\omega,\rho}
P_{\rho,\varepsilon}(f,g)$ is equivalent to
\begin{equation}\label{Pf}
P_{\varepsilon,\sigma}Q^{p}_{\sigma,\rho,B_\rho}
P_{\rho,\varepsilon} f =
P_{\varepsilon,\omega}Q^{p}_{\omega,\rho,B_\rho}
P_{\rho,\varepsilon} f,
\end{equation}
\begin{equation}\label{Pg}
P_{\varepsilon,\sigma}Q^{q}_{\sigma,\rho,C_\rho}
P_{\rho,\varepsilon} g =
P_{\varepsilon,\omega}Q^{q}_{\omega,\rho,C_\rho}
P_{\rho,\varepsilon} g.
\end{equation}
It follows that (\ref{Pf}) is equivalent to $\sigma=\omega$ on
$B_\rho$ by Lemma~6.7(c) of \cite{Chen-I}, and that (\ref{Pg}) is
equivalent to $\sigma=\omega$ on $C_\rho$ by Lemma~5.4(c) of
\cite{Chen-II}. Hence (\ref{Pf}) and (\ref{Pg}) are equivalent to
$\sigma=\omega$ on $E$.

(d) Let $T(G,B_\rho,\varepsilon;p)$ be the set of $p$-tensions of
$(G,\varepsilon)$ vanishing on $C_\rho$, and
$F(G,C_\rho,\varepsilon;q)$ the set of $q$-flows of
$(G,\varepsilon)$ vanishing on $B_\rho$. Then
$T(G,B_\rho,\varepsilon;p)$ can be identified to
$T(G/C_\rho,\varepsilon;p)$, and $F(G,C_\rho,\varepsilon;q)$ to
$F(G|C_\rho,\varepsilon;q)$. By Lemma~6.7(d) of \cite{Chen-I},
$T(G,B_\varepsilon;p)\cap \Mod_p^{-1}\bigl(\Mod_p f\bigr)$ consists
of tensions $P_{\varepsilon,\alpha} Q^p_{\alpha,\rho,B_\rho}
P_{\rho,\varepsilon} f$, where $\alpha\sim_{\CU}\rho$ on $B_\rho$.
Likewise, by Lemma~5.4(d) of \cite{Chen-II},
$F(G,C_\varepsilon;q)\cap \Mod_q^{-1}\bigl(\Mod_q g\bigr)$ consists
of flows $P_{\varepsilon,\alpha} Q^q_{\alpha,\rho,C_\rho}
P_{\rho,\varepsilon}g$, where $\alpha\sim_{\EU}\rho$ on $C_\rho$.
Clearly, $P_{\varepsilon,\alpha} Q^p_{\alpha,\rho,B_\rho}
P_{\rho,\varepsilon} f$ vanishes on $C_\rho$, and
$P_{\varepsilon,\alpha} Q^q_{\alpha,\rho,C_\rho}
P_{\rho,\varepsilon}g$ vanishes on $B_\rho$. It follows that
$K(G,\varepsilon;p,q)\cap\Mod_{p,q}^{-1}(\Mod_{p,q}(f,g))$ consists
of tension-flows $P_{\varepsilon,\alpha}Q^{p,q}_{\alpha,\rho}
P_{\rho,\varepsilon}(f,g)$, where $\alpha\sim_{\CE}\rho$ on $E$.
\end{proof}

\begin{prop}\label{equivalence-card}
For each orientation $\rho$, let $[\rho]_\CU$, $[\rho]_\EU$, and
$[\rho]_\CE$ denote its equivalence classes under the cut
equivalence, Eulerian equivalence, and cut-Eulerian equivalence
relations on ${\mathcal O}(G)$ respectively. Then
\begin{equation}\label{CE=CUEU}
\#[\rho]_\CE=\#[\rho]_\CU\cdot \#[\rho]_\EU
\end{equation}
and equals the number of $0$-$1$ complementary tension-flows of
$(G,\rho)$, i.e.,
\begin{equation}\label{CE-Delta}
\#[\rho]_\CE= \bar\kappa_\rho(G;1,1)
=\left|\bar\Delta^+_\CTF(G,\rho)\cap({\Bbb Z}\times{\Bbb
Z})^E\right|.
\end{equation}
\end{prop}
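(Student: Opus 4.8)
The plan is to chain together a few elementary bijections, the heart of which is the map reversing the orientation $\rho$ on the support of a $0$-$1$ tension-flow. First I would record the lattice-point description of the polytopes involved. By $(\ref{Positive-Delta-Product-Decom})$, passing to closures, $\bar\Delta^+_\textsc{ctf}(G,\rho)=\bar\Delta^+_\textsc{tn}(G,B_\rho)\times\bar\Delta^+_\textsc{fl}(G,C_\rho)$, and its lattice points are the pairs $(f,g)$ in which $f$ is an integer tension of $(G,\rho)$ with $0\le f\le1$ and $f|_{C_\rho}=0$, and $g$ is an integer flow of $(G,\rho)$ with $0\le g\le1$ and $g|_{B_\rho}=0$; equivalently $f=\mathbbm{1}_{\supp f}$ and $g=\mathbbm{1}_{\supp g}$ with $\supp f\subseteq E(B_\rho)$ and $\supp g\subseteq E(C_\rho)$. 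In particular the lattice points of $\bar\Delta^+_\textsc{tn}(G,B_\rho)$ (resp. $\bar\Delta^+_\textsc{fl}(G,C_\rho)$) are exactly the $\{0,1\}$-valued tensions (resp. flows) of $(G,\rho)$, and $\bar\kappa_\rho(G;1,1)=|({\Bbb Z}^2)^E\cap\bar\Delta^+_\textsc{ctf}(G,\rho)|$ holds by definition, giving the second equality in $(\ref{CE-Delta})$.

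Next I would identify these $0$-$1$ objects with subgraphs. A $\{0,1\}$-valued flow $g$ of $(G,\rho)$ has $\partial_\rho g=0$, which is exactly the statement that $\supp g$ has equal in- and out-degrees at every vertex in the orientation $\rho$, i.e. $(\supp g,\rho)$ is a directed Eulerian subgraph; conversely the indicator of any directed Eulerian subgraph is such a flow. Dually, the support of a $\{0,1\}$-valued tension $f$ of $(G,\rho)$ is a locally directed cut of $(G,\rho)$ — the one step with genuine content. I would obtain it from Proposition~6.8 of \cite{Chen-I}, or directly: write $f=\delta_\rho h$ with $h$ integer-valued; then for each integer $k$ the set $S_k:=\{v:h(v)>k\}$ yields a directed cut $[S_k,S_k^c]$ of $(G,\rho)$, these cuts are pairwise disjoint with union $\supp f$, and each, being a cut, is a disjoint union of directed bonds. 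Conversely the indicator of a directed bond $[S,S^c]$ is $\pm\,\delta_\rho(\mathbbm{1}_S)$, so the indicator of any locally directed cut is a $\{0,1\}$-valued tension. Hence $f\mapsto\supp f$ and $g\mapsto\supp g$ give bijections from the lattice points of $\bar\Delta^+_\textsc{tn}(G,B_\rho)$ onto the locally directed cuts of $(G,\rho)$, and from those of $\bar\Delta^+_\textsc{fl}(G,C_\rho)$ onto the directed Eulerian subgraphs of $(G,\rho)$.

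Finally I would assemble everything by reversing $\rho$ on supports. Reversing $\rho$ on a locally directed cut $X$ of $(G,\rho)$ (necessarily $X\subseteq E(B_\rho)$) produces an orientation $\sigma$ with $E(\rho\neq\sigma)=X$, so $\sigma\sim_\textsc{cu}\rho$, and this is a bijection between the locally directed cuts of $(G,\rho)$ and $[\rho]_\textsc{cu}$; likewise for directed Eulerian subgraphs and $[\rho]_\textsc{eu}$. Reversing $\rho$ on $X\sqcup Y$, with $X$ a locally directed cut and $Y$ a directed Eulerian subgraph of $(G,\rho)$, produces $\sigma$ with $E(\rho\neq\sigma)=X\sqcup Y$; since reversal carries each of $X,Y$ to a set of the same type, the definition of $\sim_\textsc{ce}$ gives $\sigma\in[\rho]_\textsc{ce}$, while Lemma~\ref{Cut-Eulerian-BC} shows conversely that every element of $[\rho]_\textsc{ce}$ arises from a unique pair $(X,Y)$. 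Chaining, $[\rho]_\textsc{ce}\leftrightarrow\{(X,Y)\}\leftrightarrow[\rho]_\textsc{cu}\times[\rho]_\textsc{eu}$ yields $(\ref{CE=CUEU})$, and $[\rho]_\textsc{ce}\leftrightarrow\{(X,Y)\}\leftrightarrow\bigl((\text{lattice points of }\bar\Delta^+_\textsc{tn}(G,B_\rho))\times(\text{lattice points of }\bar\Delta^+_\textsc{fl}(G,C_\rho))\bigr)\leftrightarrow({\Bbb Z}^2)^E\cap\bar\Delta^+_\textsc{ctf}(G,\rho)$ yields the first equality in $(\ref{CE-Delta})$. (Alternatively $(\ref{CE=CUEU})$ follows from Proposition~\ref{Local-Thm}(b) at $x=y=1$ together with $\bar\tau_\rho(G/C_\rho,1)=\#[\rho]_\textsc{cu}$ and $\bar\varphi_\rho(G|C_\rho,1)=\#[\rho]_\textsc{eu}$, the $q=1$ instances of results in \cite{Chen-I,Chen-II}.) The only real obstacle is the structural description of $\supp f$ for a $\{0,1\}$-valued tension; the remaining steps are routine bookkeeping with $(\ref{Positive-Delta-Product-Decom})$ and Lemma~\ref{Cut-Eulerian-BC}.
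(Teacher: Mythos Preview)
Your proposal is correct and follows essentially the same approach as the paper's proof. Both arguments hinge on the bijection between $[\rho]_\textsc{ce}$ and the $0$-$1$ tension-flows of $(G,\rho)$ obtained by reversing $\rho$ on the support (equivalently, the paper's symmetric difference function $I_{\sigma,\rho}$), together with the structural fact---cited from \cite{Chen-I,Chen-II} in both cases---that supports of $0$-$1$ tensions and flows are locally directed cuts and directed Eulerian subgraphs respectively. Your organization via the intermediate set of pairs $(X,Y)$ and the explicit level-set argument for the tension case are minor elaborations, but the underlying bijections and the use of Lemma~\ref{Cut-Eulerian-BC} for the converse match the paper's proof.
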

\begin{proof}
The map $[\rho]_\CU\times [\rho]_\EU\rightarrow [\rho]_\CE$ given by
$(\sigma,\omega)\mapsto (\sigma{|B_\rho})\cup(\omega{|C_\rho})$ is
clearly a bijection, where $\sigma{|B_\rho}$ is the restriction of
the orientation $\sigma$ to  $E(B_\rho)$, and $\omega{|C_\rho}$ the
restriction of $\omega$ to $E(C_\rho)$. The equality (\ref{CE=CUEU})
follows immediately.

Let $\sigma$ be an orientation that is cut-Eulerian equivalent to
$\rho$. Let $I_\sigma$ denote the symmetric difference function
$I_{\sigma,\rho}$ defined by (\ref{Indicator-Orientation}). Then
$I_{\sigma}|_{B_\rho}$ is a tension on $(G/C_\rho,\rho)$ by
Proposition~6.8 in \cite{Chen-I}, and can be extended to a tension
$I^\prime_{\sigma}$ on $(G,\rho)$ such that
$I^\prime_{\sigma}|_{C_\rho}=0$. Likewise, $I_{\sigma}|_{C_\rho}$ is
a flow on $(G|C_\rho,\rho)$ by Lemma~5.5 in \cite{Chen-II}, and can
be extended to a flow $I^{\prime\prime}_{\sigma}$ on $(G,\rho)$ such
that $I^{\prime\prime}_{\sigma}|_{B_\rho}=0$. Hence $I_{\sigma}$ is
decomposed into a 0-1 tension-flow
$(I^\prime_\sigma,I^{\prime\prime}_\sigma)$ of $(G,\rho)$. We then
have a well-defined map
\[
[\rho]_\CE =\{\sigma\in\mathcal{O}(G):\sigma\sim_{\CE}\rho\}
\rightarrow \bar\Delta^+_\CTF(G,\rho)\cap ({\Bbb Z}\times{\Bbb
Z})^E, \sp \sigma\mapsto (I^\prime_\sigma,I^{\prime\prime}_\sigma).
\]
The map is clearly injective.

For surjectivity of the map, let $(f,g)\in\bar\Delta^+_\CTF(G,\rho)$
be a 0-1 tension-flow. Set $\sigma:=-\rho_{f+g}$, where $\rho_{f+g}$
is the orientation associated with $f+g$ and $\rho$ by
(\ref{Orientation-Function}). Then $I_{\sigma,\rho}=f+g$. Since
\[
\begin{split}
E(\sigma\neq\rho) &=\{e\in E: (f+g)(e)=1\}
\\ &=\{e\in E(B_\rho):f(e)=1\} \sqcup \{e\in
E(C_\rho):g(e)=1\}
\end{split}
\]
is a disjoint union of a locally directed cut and a directed
Eulerian subgraph with the orientation $\rho$, we see that
$\sigma\sim_{\CE}\rho$. Thus the map is surjective.
\end{proof}

\begin{lemma}
Let $[\rho]=\{\sigma\in\mathcal{O}(G):\sigma\sim_\CE\rho\}$ and
$\Delta^{[\rho]}_\CTF(G,\varepsilon) =\bigsqcup_{\sigma\in[\rho]}
\Delta^\sigma_\CTF(G,\varepsilon)$ for each $\rho\in\mathcal{O}(G)$.
Then
\begin{align}
\kappa_\rho(G;p,q) &=\#\Mod_{p,q}
\big((p,q)\Delta^\rho_\CTF(G,\varepsilon) \cap ({\Bbb Z}\times{\Bbb
Z})^E\big)
\label{Kappa-Z-rho}\\
&= \#\Mod_{p,q} \big((p,q)\Delta^{[\rho]}_\CTF(G,\varepsilon)
\cap({\Bbb Z}\times{\Bbb Z})^E\big); \label{Kappa-Z-[rho]}
\end{align}
\begin{align}
K(G,\varepsilon;{\Bbb Z}_p,{\Bbb Z}_q)
&=\bigsqcup_{[\rho]\in[{\mathcal O}(G)]} \Mod_{p,q}
\big((p,q)\Delta^\rho_\CTF(G,\varepsilon)\cap ({\Bbb Z}\times{\Bbb
Z})^E\big)
\label{Modular-Decom-rho}\\
&= \bigsqcup_{[\rho]\in[{\mathcal O}(G)]} \Mod_{p,q}
\big((p,q)\Delta^{[\rho]}_\CTF(G,\varepsilon)\cap ({\Bbb
Z}\times{\Bbb Z})^E\big). \label{Modular-Decom-[rho]}
\end{align}
\end{lemma}
\begin{proof}
Notice that for each tension-flow $(f,g)\in
(p,q)\Delta^\rho_\CTF(G,\varepsilon)$, we have
\begin{align}
\#[\rho] &=\#\{P_{\varepsilon,\sigma} Q^{p,q}_{\sigma,\rho}
P_{\rho,\varepsilon} (f,g) : \sigma\sim_\CE\rho \} \nonumber\\
&=\left|K(G,\varepsilon;p,q)\cap\Mod_{p,q}^{-1}\Mod_{p,q}(f,g)\right|;
\label{MM}
\end{align}
the first equality follows from Lemma~\ref{OPQ}(c) and the second
from Lemma~\ref{OPQ}(d). Set
$D:=(p,q)\Delta^{[\rho]}_\CTF(G,\varepsilon)$. Then
\begin{align}
D &= \bigsqcup_{\sigma\in[\rho]}
P_{\varepsilon,\sigma}Q^{p,q}_{\sigma,\rho} P_{\rho,\varepsilon}
\big((p,q)\Delta^\rho_\CTF(G,\varepsilon)\big) \nonumber\\
&= \bigsqcup_{\sigma\in[\rho]\atop
(f,g)\in(p,q)\Delta^\rho_\CTF(G,\varepsilon)} \big\{
P_{\varepsilon,\sigma} Q^{p,q}_{\sigma,\rho}
P_{\rho,\varepsilon}(f,g)\big\} \nonumber\\
&= \bigsqcup_{(f,g)\in(p,q) \Delta^\rho_\CTF(G,\varepsilon)}
K(G,\varepsilon;p,q)\cap \Mod_{p,q}^{-1}
\Mod_{p,q}(f,g) \nonumber\\
&= K(G,\varepsilon;p,q)\cap \Mod_{p,q}^{-1}
\Mod_{p,q}(p,q)\Delta^\rho_\CTF(G,\varepsilon); \label{Delta[rho]}
\end{align}
the first equality follows from Lemma~\ref{OPQ}(b), the second one
is straightforward, the third one follows from Lemma~\ref{OPQ}(d),
and the last one is trivial. Since the orientation $\rho$ on the
left-hand side of (\ref{Delta[rho]}) can be replaced by any
orientation $\sigma\in[\rho]$, we thus have
\begin{equation}\label{Tilde-D}
D = K(G,\varepsilon;p,q)\cap \Mod_{p,q}^{-1}
\Mod_{p,q}(p,q)\Delta^{[\rho]}_\CTF(G,\varepsilon).
\end{equation}
Now on the one hand, by (\ref{MM}), (\ref{Delta[rho]}), and
(\ref{Tilde-D}) we have
\begin{align}
\big|D\cap ({\Bbb Z}\times{\Bbb Z})^E\big| &= \big|\Mod_{p,q}
\big((p,q)\Delta^\rho_\CTF(G,\varepsilon)\cap ({\Bbb Z}\times{\Bbb
Z})^E\big)\big|\cdot
\#[\rho] \label{Delta-Varrho}\\
&= \big|\Mod_{p,q} \big((p,q)\Delta^{[\rho]}_\CTF(G,\varepsilon)\cap
({\Bbb Z}\times{\Bbb Z})^E\big)\big|\cdot\#[\rho].
\label{Delta-[Varrho]}
\end{align}
On the other hand, recall that $\kappa_\rho(G;p,q)
=\kappa_\sigma(G;p,q)$ if $\sigma\sim_\CE\rho$ (see
Lemma~\ref{cut-Euler-P}(d)); by definition of
$\Delta^{[\rho]}_\CTF(G,\varepsilon)$ we trivially have
\begin{align}
\big|D\cap ({\Bbb Z}\times{\Bbb Z})^E\big|
&=\kappa_\rho(G;p,q)\cdot\#[\rho] \label{Delta[Varrho]}.
\end{align}
Equate (\ref{Delta-Varrho}), (\ref{Delta-[Varrho]}), and
(\ref{Delta[Varrho]}); we obtain (\ref{Kappa-Z-rho}) and
(\ref{Kappa-Z-[rho]}).

Notice the disjoint decomposition in
Lemma~\ref{Delta-CTF-Decomposition}(a). Using the notation
$\Delta^{[\rho]}_\CTF(G,\varepsilon)$ and applying
(\ref{Delta[rho]}), we have
\begin{align*}
(p,q)\Delta_\CTF(G,\varepsilon) &= \bigsqcup_{[\rho]\in[{\mathcal
O}(G)]} \bigsqcup_{\sigma\in[\rho]}
(p,q)\Delta^\sigma_\CTF(G,\varepsilon) \\
&= \bigsqcup_{[\rho]\in[{\mathcal O}(G)]}
K(G,\varepsilon;p,q)\cap
\Mod_{p,q}^{-1}\Mod_{p,q}(p,q)\Delta^\rho_\CTF(G,\varepsilon)\\
& = \bigsqcup_{[\rho]\in[{\mathcal O}(G)]} K(G,\varepsilon;p,q)\cap
\Mod_{p,q}^{-1}\Mod_{p,q}(p,q)\Delta^{[\rho]}_\CTF(G,\varepsilon).
\end{align*}
Note that $K(G,\varepsilon;p,q)=(p,q)\Delta_\CTF(G,\varepsilon)$.
Apply the map $\Mod_{p,q}$ to both sides and restrict to the
integral lattice $({\Bbb Z}\times{\Bbb Z})^{E}$; we obtain
(\ref{Modular-Decom-rho}) and (\ref{Modular-Decom-[rho]}) by
Lemma~\ref{Surjective}.
\end{proof}

{\sc Proof of Theorem~\ref{Modular-Thm}} \vspace{1ex}

(a) The polynomiality follows from Decomposition Formulas
(\ref{Mod-Kappa-Decom}) and (\ref{Mod-Bar-Kappa-Decom}). The
independence of the chosen $\Rep[\mathcal{O}(G)]$ follows from the
fact that $\kappa_\rho(G;x,y) =\kappa_\sigma(G;x,y)$ if
$\rho\sim_\CE\sigma$.

(b) Let $p,q$ be positive integers. Taking the cardinality on both
sides of
(\ref{Modular-Decom-[rho]}), we have
\[
\kappa(G;p,q)=\sum_{[\rho]\in[\mathcal{O}(G)]} \big|\Mod_{p,q}
((p,q)\Delta^{[\rho]}_\CTF(G,\varepsilon)\cap ({\Bbb Z}\times{\Bbb
Z})^E)\big|.
\]
Taking account of (\ref{Kappa-Z-[rho]}), we obtain
(\ref{Mod-Kappa-Decom}). Equation (\ref{Mod-Bar-Kappa-Decom})
follows by definition of $\bar\kappa$.

(c) The Reciprocity Laws are trivial by the Reciprocity Law
(\ref{Local-Reciprocity-Law}).

(d) Let $A,B$ be finite abelian groups. If $|A|=p,|B|=1$, i.e.,
$B=\{0\}$, then a tension-flow $(f,g)\in\Omega(G,\varepsilon;A,B)$
is complementary if and only if $g\equiv 0$ and $f$ is a
nowhere-zero tension of $(G,\varepsilon)$. Thus $\kappa(G;
x,1)=\tau(G,x)$. If $|A|=1$, $|B|=q$, i.e., $A=\{0\}$, then a
tension-flow $(f,g)\in\Omega(G,\varepsilon;A,B)$ is complementary if
and only if $f\equiv 0$ and $g$ is a nowhere-zero flow of
$(G,\varepsilon)$. Thus $\kappa(G; 1,y)=\varphi(G,y)$. We have
proved (\ref{Mod-Kappa-Special-Variable}).

Since $\kappa_\rho(G; x,1)=\tau_\rho(G,x)$ and
$\kappa_\rho(G;1,y)=\varphi_\rho(G,y)$, the decomposition formulas
(\ref{Mod-Kappa-Decom}) and (\ref{Mod-Bar-Kappa-Decom}) become the
following
\begin{align}
\tau(G,x)&=\sum_{[\rho]\in[\mathcal{O}_\AC(G)]}
\tau_\rho(G,x), \label{Tau-Decom}\\
\varphi(G,y)&=\sum_{[\rho]\in[\mathcal{O}_\TC(G)]}
\varphi_\rho(G,y). \label{Phi-Decom}
\end{align}
The dual polynomials $\bar\tau$ and $\bar\varphi$ have the similar
decomposition formulas (by definitions):
\begin{align}
\bar\tau(G,x)&=\sum_{[\rho]\in[\mathcal{O}_\AC(G)]}
\bar\tau_\rho(G,x), \label{Bar-Tau-Decom}\\
\bar\varphi(G,y)&=\sum_{[\rho]\in[\mathcal{O}_\TC(G)]}
\bar\varphi_\rho(G,y). \label{Bar-Phi-Decom}
\end{align}
Consider the case of $y=-1$ and the case of $x=-1$; we obtain
(\ref{Mod-Bar-Kappa-Special-Variable}) as follows:
\begin{align*}
\bar\kappa(G;x,-1) &= \sum_{[\rho]\in[\mathcal{O}_\AC(G)]}
(-1)^{r(G)+|E(C_\rho)|}\kappa_\rho(G;-x,1)\\
&= \sum_{[\rho]\in[\mathcal{O}_\AC(G)]}
(-1)^{r(G)}\tau_\rho(G,-x) \sp (\mbox{since $|C_\rho|=0$})\\
&= \sum_{[\rho]\in[\mathcal{O}_\AC(G)]} \bar\tau_\rho(G,x)
=\bar\tau(G,x),
\end{align*}
\begin{align*}
\bar\kappa(G;-1,y) &= \sum_{[\rho]\in[\mathcal{O}_\TC(G)]}
(-1)^{n(G)+|E(B_\rho)|}\kappa_\rho(G;1,-y)\\
&= \sum_{[\rho]\in[\mathcal{O}_\TC(G)]}
(-1)^{n(G)}\varphi_\rho(G,-y) \sp (\mbox{since $|B_\rho|=0$})\\
&= \sum_{[\rho]\in[\mathcal{O}_\TC(G)]} \bar\varphi_\rho(G,y)
=\bar\varphi(G,y),
\end{align*}
where the first equality of both idenitites follows from
(\ref{Mod-Bar-Kappa-Reciprocity}), the second equality of both
follows from (\ref{Mod-Kappa-Special-Variable}), the third equality
of both follows from (\ref{Local-Reciprocity-Law}) (with
$E(C_\rho)=\emptyset$ and $E(B_\rho)=\emptyset$ respectively), the
last equality in the former follows from (\ref{Bar-Tau-Decom}), and
the last equality of the latter follows from (\ref{Bar-Phi-Decom}).

(e) For a subset $X\subseteq E$ and an orientation
$\rho\in\mathcal{O}(G)$, we identify the edge set $E(G/X)$ as $E-X$,
write the restriction of $\rho$ to $X$ as $\rho_{|X}$, and the
induced orientation by $\rho$ on $G/X$ as $\rho_{/X}$. If
$X=E(C_\rho)$, then $\rho_{/X}$ is acyclic and $\rho_{|X}$ is
totally cyclic. Thus
\begin{align*}
\kappa(G;x,y)&=\sum_{X\subseteq E}
\sum_{\rho\in[\mathcal{O}(G)]\atop E(C_\rho)=X}
\tau_{\rho_{/X}}(G/X,x)\,\varphi_{\rho_{|X}}(G|X,y) \\
&=\sum_{X\subseteq E} \sum_{\rho\in[\mathcal{O}_\AC(G/X)]\atop
\sigma\in[\mathcal{O}_\TC(G|X)]}
\tau_\rho(G/X,x)\,\varphi_\sigma(G|X,y) \\
&=\sum_{X\subseteq E} \tau(G/X,x)\,\varphi(G|X,y).
\end{align*}
The last equality follows from the decomposition (\ref{Tau-Decom})
to graph $G/X$ and the decomposition (\ref{Phi-Decom}) to graph
$G|X$. Analogously,
\begin{align*}
\bar\kappa(G;x,y)&=\sum_{X\subseteq E}
\sum_{\rho\in[\mathcal{O}(G)]\atop E(C_\rho)=X}
\bar\tau_{\rho_{/X}}(G/X,x)\,\bar\varphi_{\rho_{|X}}(G|X,y) \\
&=\sum_{X\subseteq E} \sum_{\rho\in[\mathcal{O}_\AC(G/X)]\atop
\sigma\in[\mathcal{O}_\TC(G|X)]}
\bar\tau_\rho(G/X,x)\,\bar\varphi_\sigma(G|X,y) \\
&=\sum_{X\subseteq E} \bar\tau(G/X,x)\,\bar\varphi(G|X,y).
\end{align*}
The last equality follows from the decomposition
(\ref{Bar-Tau-Decom}) to graph $G/X$ and the decomposition
(\ref{Bar-Phi-Decom}) to graph $G|X$. \hfill{$\Box$} \vspace{1ex}

There are specific combinatorial interpretations on $\kappa$ and
$\bar\kappa$ at some special integers, similar to that of
Corollary~\ref{Integral-Kappa-Special}. The special values of the
Tutte polynomial $T_G$ in the following corollary are observed
directly by Gioan \cite{Gioan}  by cycle-cocycle reversing systems.
Recall that $\mathcal{O}_\CU(G)$ ($\mathcal{O}_\EU(G)$) is the set
of orientations $\rho$ on $G$ such that $(G,\rho)$ is a locally
directed cut (directed Eulerian subgraph), and that
$\mathcal{O}_\CE(G)$ is the set of orientations $\rho$ on $G$ such
that $(G,\rho)$ is an edge-disjoint union of a locally directed cut
and a directed Eulerian subgraph. Note that
\begin{equation}
\mathcal{O}_\CU(G)\subseteq\mathcal{O}_\AC(G),\quad
\mathcal{O}_\EU(G)\subseteq\mathcal{O}_\TC(G).
\end{equation}
Moreover, if $\rho\in \mathcal{O}_\CU(G)$ $(\mathcal{O}_\EU(G))$ and
$\rho\sim_\CE\sigma$, then $\sigma\in \mathcal{O}_\CU(G)$
$(\mathcal{O}_\EU(G))$. Subsequently, if $\rho\in
\mathcal{O}_\CE(G)$ and $\rho\sim_\CE\sigma$, then $\sigma\in
\mathcal{O}_\CE(G)$. We have the following corollary.

\begin{cor}\label{Modular-Kappa-Special}
Let $[\mathcal{O}(G)]$, $[\mathcal{O}_\AC(G)]$,
$[\mathcal{O}_\TC(G)]$, $[\mathcal{O}_\CU(G)]$,
$[\mathcal{O}_\EU(G)]$, $[\mathcal{O}_\CE(G)]$ denote the sets of
cut-Eulerian equivalence classes of $\mathcal{O}(G)$,
$\mathcal{O}_\AC(G)$, $\mathcal{O}_\TC(G)$, $\mathcal{O}_\CU(G)$,
$\mathcal{O}_\EU(G)$, $\mathcal{O}_\CE(G)$ respectively. Then
\begin{align*}
T_G(0,0) &= \bar\kappa(G;-1,-1) = \kappa(G;1,1)= 0,\\
T_G(1,1) &= \bar\kappa(G;0,0)  = \#[\mathcal{O}(G)],\\
T_G(2,2) &= \bar\kappa(G;1,1)  = \#{\mathcal O}(G);
\end{align*}
\begin{align*}
|T_G(0,-1)| &= |\bar\kappa(G;-1,-2)| = \kappa(G;1,2)=
\#[\mathcal{O}_\EU(G)],\\
|T_G(-1,0)| &= |\bar\kappa(G;-2,-1)| = \kappa(G;2,1) =
\#[\mathcal{O}_\CU(G)];\\
T_G(1,0) &= \bar\kappa(G;0,-1) = |\kappa(G;0,1)| =
\#[\mathcal{O}_\AC(G)],\\
T_G(0,1) &= \bar\kappa(G;-1,0) = |\kappa(G;1,0)| =
\#[\mathcal{O}_\TC(G)];
\end{align*}
\begin{align*}
\kappa(G;2,2) &= \#[\mathcal{O}_\CE(G)].
\end{align*}
Let $[{\mathcal O}(G)]_\CU$ and $[{\mathcal O}(G)]_\EU$ denote the
sets of equivalences classes of ${\mathcal O}(G)$ under the cut
equivalence and Eulerian equivalence relations respectively. Then
\begin{align*}
T_G(1,2) &= \bar\kappa(G;0,1) = \#[{\mathcal O}(G)]_\CU,\\
T_G(2,1) &= \bar\kappa(G;1,0) = \#[{\mathcal O}(G)]_\EU.
\end{align*}
\end{cor}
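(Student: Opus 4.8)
The plan is to reduce the whole statement to assertions about $\kappa$ and $\bar\kappa$ through one polynomial identity, and then evaluate those by Theorem~\ref{Modular-Thm}, Proposition~\ref{Local-Thm} and Proposition~\ref{equivalence-card}, essentially mirroring the proof of Corollary~\ref{Integral-Kappa-Special}. First I would record $T_G(x,y)=\bar\kappa(G;x-1,y-1)$ as a polynomial identity: it follows from Theorem~\ref{Rank-Generating-Interpretation} ($\bar\kappa(G;x,y)=R_G(x,y)$) together with the standard relation $T_G(x,y)=R_G(x-1,y-1)$ between the Tutte polynomial and Whitney's rank generating polynomial (equivalently, it is the polynomial extension of Corollary~\ref{Tutte-Interpretation}). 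Being a polynomial identity it holds also at the points with negative arguments appearing in the statement, and it makes every equality of the shape ``$T_G(a,b)=\bar\kappa(G;a-1,b-1)$'' automatic; what remains is to identify the listed values of $\kappa$ and $\bar\kappa$ with the stated orientation counts.

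Next I would collect the values of the local polynomials and their tension/flow factors, exactly as in the proof of Corollary~\ref{Integral-Kappa-Special}. Using Proposition~\ref{Local-Thm}(b),(d) and the local Reciprocity Law~(\ref{Local-Reciprocity-Law}): $\bar\kappa_\rho(G;0,0)=1$; $\kappa_\rho(G;1,1)=0$ (no integer lies strictly between $0$ and $1$); $\kappa_\rho(G;2,2)=1$ iff $(G,\rho)$ is a disjoint union of a locally directed cut and a directed Eulerian subgraph, i.e.\ $\rho\in\mathcal{O}_\textsc{ce}(G)$, and similarly $\tau_\rho(G,2)=1$ iff $\rho\in\mathcal{O}_\textsc{cu}(G)$, $\varphi_\rho(G,2)=1$ iff $\rho\in\mathcal{O}_\textsc{eu}(G)$; $\tau_\rho(G,0)=(-1)^{r(G)}$ for $\rho\in\mathcal{O}_\textsc{ac}(G)$ and $\varphi_\rho(G,0)=(-1)^{n(G)}$ for $\rho\in\mathcal{O}_\textsc{tc}(G)$, with $\bar\tau_\rho(G,0)=\bar\varphi_\rho(G,0)=1$; and, from the cited works, $\bar\tau_\rho(G,1)=\#[\rho]_\textsc{cu}$ (Proposition~6.8 of \cite{Chen-I}), $\bar\varphi_\rho(G,1)=\#[\rho]_\textsc{eu}$ (Proposition~5.5 of \cite{Chen-II}), $\bar\kappa_\rho(G;1,1)=\#[\rho]_\textsc{ce}$ (Proposition~\ref{equivalence-card}). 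I would also check that $\mathcal{O}_\textsc{ac}(G)$, $\mathcal{O}_\textsc{tc}(G)$, $\mathcal{O}_\textsc{cu}(G)$, $\mathcal{O}_\textsc{eu}(G)$, $\mathcal{O}_\textsc{ce}(G)$ are each a union of cut-Eulerian equivalence classes (Lemma~\ref{cut-Euler-P}(b), together with the stability of $\mathcal{O}_\textsc{ac},\mathcal{O}_\textsc{tc}$ under $\sim_\textsc{cu},\sim_\textsc{eu}$ noted in Section~2), so the bracketed symbols in the statement are well defined, and that $\sim_\textsc{ce}$ restricts to $\sim_\textsc{cu}$ on $\mathcal{O}_\textsc{ac}(G)$ and to $\sim_\textsc{eu}$ on $\mathcal{O}_\textsc{tc}(G)$.

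Then I would apply the decomposition formulas Theorem~\ref{Modular-Thm}(b), the specializations Theorem~\ref{Modular-Thm}(d) combined with (\ref{Tau-Decom})--(\ref{Bar-Phi-Decom}), and the reciprocity laws Theorem~\ref{Modular-Thm}(c), summing the local values over cut-Eulerian classes. This gives at once $\bar\kappa(G;0,0)=\#[\mathcal{O}(G)]$; $\bar\kappa(G;1,1)=\sum_{[\rho]}\#[\rho]_\textsc{ce}=|\mathcal{O}(G)|$; $\kappa(G;1,1)=0$, hence $\bar\kappa(G;-1,-1)=0$ by (\ref{Mod-Bar-Kappa-Reciprocity}); $\kappa(G;2,2)=\#[\mathcal{O}_\textsc{ce}(G)]$; $\kappa(G;1,2)=\varphi(G,2)=\#[\mathcal{O}_\textsc{eu}(G)]$ and $\kappa(G;2,1)=\tau(G,2)=\#[\mathcal{O}_\textsc{cu}(G)]$; $\kappa(G;0,1)=\tau(G,0)=(-1)^{r(G)}\#[\mathcal{O}_\textsc{ac}(G)]$ and $\kappa(G;1,0)=\varphi(G,0)=(-1)^{n(G)}\#[\mathcal{O}_\textsc{tc}(G)]$; and the matching $\bar\kappa$ values either from reciprocity or from the $\bar\tau,\bar\varphi$ specializations (\ref{Mod-Bar-Kappa-Special-Variable}) — here the sign $(-1)^{r(G)+|E(C_\rho)|}$ is constant over the classes contributing a nonzero term (it is $(-1)^{r(G)}$ when $E(C_\rho)=\emptyset$ and $(-1)^{n(G)}$ when $E(C_\rho)=E$), which is precisely the source of the absolute values in the statement. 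This settles every equality except $T_G(1,2)=\bar\kappa(G;0,1)$ and $T_G(2,1)=\bar\kappa(G;1,0)$.

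For these last two I would argue from the counting definition of $\bar\kappa$: $\bar\kappa(G;1,0)$ counts pairs $(\rho,f)$ with $\rho$ a cut-Eulerian representative and $f$ a $0$-$1$ tension of $(G,\rho)$; since a $0$-$1$ tension of $(G,\rho)$ is the same datum as an orientation cut-equivalent to $\rho$ (reverse $\rho$ on the edge set $\{f=1\}$, which is a locally directed cut of $(G,\rho)$), this equals $\sum_{[\rho]\in[\mathcal{O}(G)]}\#[\rho]_\textsc{cu}$, and dually $\bar\kappa(G;0,1)=\sum_{[\rho]}\#[\rho]_\textsc{eu}$. The concluding step is the combinatorial fact read off from the bijection $[\rho]_\textsc{ce}\cong[\rho]_\textsc{cu}\times[\rho]_\textsc{eu}$ in the proof of Proposition~\ref{equivalence-card}: inside a single cut-Eulerian class all of its cut-equivalence subclasses have the common size $\#[\rho]_\textsc{cu}$ and there are $\#[\rho]_\textsc{eu}$ of them, and dually; hence $\sum_{[\rho]_\textsc{ce}}\#[\rho]_\textsc{cu}$ is the total number of Eulerian-equivalence classes of $\mathcal{O}(G)$ and $\sum_{[\rho]_\textsc{ce}}\#[\rho]_\textsc{eu}$ the total number of cut-equivalence classes, giving $\bar\kappa(G;1,0)=\#[\mathcal{O}(G)]_\textsc{eu}$ and $\bar\kappa(G;0,1)=\#[\mathcal{O}(G)]_\textsc{cu}$. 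I expect this last lemma to be the only real obstacle: everything else is the bookkeeping of Theorem~\ref{Modular-Thm} plus the already-cited interpretations of $\bar\tau_\rho(G,1)$ and $\bar\varphi_\rho(G,1)$, whereas this step needs the (short but genuinely new) observation that cut-Eulerian classes refine uniformly into cut-classes and into Eulerian-classes with the stated multiplicities; the secondary care is tracking signs in the reciprocity laws and confirming that the five orientation families are unions of cut-Eulerian classes.
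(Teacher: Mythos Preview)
Your proposal is correct and follows essentially the same route as the paper. The paper's proof says only that it is ``completely parallel to the proof of Corollary~\ref{Integral-Kappa-Special} by modifying the concerned orientations to proper equivalence classes,'' and then isolates the same two extra identities $\#[\mathcal{O}(G)]_\textsc{cu}=\sum_{\rho\in\Rep[\mathcal{O}(G)]}\#[\rho]_\textsc{eu}$ and $\#[\mathcal{O}(G)]_\textsc{eu}=\sum_{\rho\in\Rep[\mathcal{O}(G)]}\#[\rho]_\textsc{cu}$, proving them via the set equations
\[
[\mathcal{O}(G)]_\textsc{cu}=\bigsqcup_{\rho\in\Rep[\mathcal{O}(G)]}\{[\sigma]_\textsc{cu}:\sigma\in[\rho]_\textsc{eu}\},\qquad
[\mathcal{O}(G)]_\textsc{eu}=\bigsqcup_{\rho\in\Rep[\mathcal{O}(G)]}\{[\sigma]_\textsc{eu}:\sigma\in[\rho]_\textsc{cu}\}.
\]
Your argument for the same step---that the product bijection $[\rho]_\textsc{ce}\cong[\rho]_\textsc{cu}\times[\rho]_\textsc{eu}$ from Proposition~\ref{equivalence-card} forces each cut-Eulerian class to split into exactly $\#[\rho]_\textsc{eu}$ cut-classes (and dually)---is precisely the content of these set equations, just phrased in terms of counting subclasses rather than listing them. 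The only cosmetic difference is that you invoke $T_G(x,y)=\bar\kappa(G;x-1,y-1)$ up front via Theorem~\ref{Rank-Generating-Interpretation}, whereas the paper leaves the link to $T_G$ implicit.
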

\begin{proof}
It is completely parallel to the proof of
Corollary~\ref{Integral-Kappa-Special} by modifying the concerned
orientations to proper equivalence classes of those orientations,
except the following two equalities:
\begin{eqnarray*}
\#[{\mathcal O}(G)]_\CU =
\sum_{\rho\in \Rep[{\mathcal O}(G)]} \#[\rho]_\EU,\\
\#[{\mathcal O}(G)]_\EU = \sum_{\rho\in\Rep[{\mathcal O}(G)]}
\#[\rho]_\CU,
\end{eqnarray*}
where $\Rep[{\mathcal O}(G)]$ is a set of distinct representatives
of cut-Eulerian equivalence classes in $[{\mathcal O}(G)]$. The two
equalities follow respectively from the set equations:
\[
[{\mathcal O}(G)]_\CU=\bigsqcup_{\rho\in\Rep[{\mathcal O}(G)]}
\big\{[\sigma]_\CU: \sigma\in[\rho]_\EU\big\},
\]
\[
[{\mathcal O}(G)]_\EU=\bigsqcup_{\rho\in\Rep[{\mathcal O}(G)]}
\big\{[\sigma]_\EU: \sigma\in[\rho]_\CU\big\}.
\]
\end{proof}

\begin{thm}[Integral-Modular Relations]\label{Integral-Kappa-Modular} For each orientation
$\rho\in\mathcal{O}(G)$, let $[\rho]$ denote the cut-Eulerian
equivalence class of $\rho$ in $\mathcal{O}(G)$. Then
\begin{align}
\kappa_{\Bbb Z}(G;x,y) &= \sum_{[\rho]\in[\mathcal{O}(G)]}
\#[\rho]\,\kappa_\rho(G;x,y), \label{Int-Mod-Kappa}\\
\bar\kappa_{\Bbb Z}(G;x,y) &= \sum_{[\rho]\in[\mathcal{O}(G)]}
\#[\rho]\,\bar\kappa_\rho(G;x,y). \label{Int-Mod-Bar-Kappa}
\end{align}
Furthermore, if $\#[\rho]$ is constant for all
$\rho\in\mathcal{O}(G)$, then
\[
\kappa_{\Bbb Z}(G;x,y)=\#[\rho]\,\kappa(G;x,y),
\]
\[
\bar\kappa_{\Bbb Z}(G;x,y)=\#[\rho]\,\bar\kappa(G;x,y).
\]
\end{thm}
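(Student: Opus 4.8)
The plan is to obtain both identities by regrouping the orientation-indexed decomposition formulas of Theorem~\ref{Integral-Thm}(b) according to cut-Eulerian equivalence classes, using that the local polynomials are constant on each class. First I would prove (\ref{Int-Mod-Kappa}): starting from (\ref{Int-Kappa-Decom}), partition the summation index $\mathcal{O}(G)$ into its cut-Eulerian equivalence classes to get
\[
\kappa_\mathbbm{z}(G;x,y)=\sum_{\rho\in\mathcal{O}(G)}\kappa_\rho(G;x,y)
=\sum_{[\rho]\in[\mathcal{O}(G)]}\;\sum_{\sigma\in[\rho]}\kappa_\sigma(G;x,y),
\]
and then invoke Lemma~\ref{cut-Euler-P}(d), which gives $\kappa_\sigma(G;x,y)=\kappa_\rho(G;x,y)$ for every $\sigma\in[\rho]$, so the inner sum collapses to $\#[\rho]\,\kappa_\rho(G;x,y)$.

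For (\ref{Int-Mod-Bar-Kappa}) I would run the identical regrouping starting from (\ref{Int-Bar-Kappa-Decom}); the single extra ingredient needed is the class-invariance $\bar\kappa_\sigma(G;x,y)=\bar\kappa_\rho(G;x,y)$ for $\sigma\sim_\textsc{ce}\rho$, and this is the only point of the proof with any content, since it is not isolated as a numbered statement earlier. I would establish it by one of two routes: either directly, from the lattice-point-preserving bijection $Q^{p,q}_{\rho,\sigma}$ of Lemma~\ref{cut-Euler-P}(c) between the closed polytopes $p\bar\Delta_\textsc{tn}(G,B_\sigma)\times q\bar\Delta_\textsc{fl}(G,C_\sigma)$ and $p\bar\Delta_\textsc{tn}(G,B_\rho)\times q\bar\Delta_\textsc{fl}(G,C_\rho)$, whose lattice-point counts are $\bar\kappa_\sigma(G;p,q)$ and $\bar\kappa_\rho(G;p,q)$ by the closed analogue of the product decomposition in Proposition~\ref{Local-Thm}(b); or, more cheaply, by applying the local Reciprocity Law (\ref{Local-Reciprocity-Law}) to write $\bar\kappa_\rho(G;x,y)=(-1)^{r(G)+|E(C_\rho)|}\kappa_\rho(G;-x,-y)$ and observing that $|E(C_\rho)|=|E(C_\sigma)|$ by Lemma~\ref{Cut-Eulerian-BC}(a) while $\kappa_\rho=\kappa_\sigma$ by Lemma~\ref{cut-Euler-P}(d). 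With this in hand, the inner sum over $\sigma\in[\rho]$ collapses to $\#[\rho]\,\bar\kappa_\rho(G;x,y)$, which is (\ref{Int-Mod-Bar-Kappa}).

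The ``furthermore'' clause is then purely formal: if $\#[\rho]=N$ for all $\rho\in\mathcal{O}(G)$, I would factor $N$ out of the sums in (\ref{Int-Mod-Kappa}) and (\ref{Int-Mod-Bar-Kappa}) and invoke the modular decomposition formulas (\ref{Mod-Kappa-Decom}) and (\ref{Mod-Bar-Kappa-Decom}) to conclude $\kappa_\mathbbm{z}(G;x,y)=N\,\kappa(G;x,y)$ and $\bar\kappa_\mathbbm{z}(G;x,y)=N\,\bar\kappa(G;x,y)$. Overall this is a bookkeeping regrouping, and I anticipate no real obstacle beyond confirming the bar-version class-invariance discussed above.
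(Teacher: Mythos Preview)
Your proposal is correct and is precisely the argument the paper has in mind: the theorem is stated in the paper without proof, as an immediate consequence of the orientation-indexed decompositions (\ref{Int-Kappa-Decom}), (\ref{Int-Bar-Kappa-Decom}), (\ref{Mod-Kappa-Decom}), (\ref{Mod-Bar-Kappa-Decom}) together with the cut-Eulerian class-invariance of the local polynomials. The one point you correctly flag---that $\bar\kappa_\sigma=\bar\kappa_\rho$ for $\sigma\sim_\textsc{ce}\rho$ is used but never isolated as a numbered statement---is indeed taken for granted elsewhere in the paper (e.g.\ when asserting that $\bar\kappa$ is independent of the chosen representatives), and both of your suggested justifications (Lemma~\ref{cut-Euler-P}(c) directly, or Reciprocity~(\ref{Local-Reciprocity-Law}) combined with Lemma~\ref{Cut-Eulerian-BC}(a) and Lemma~\ref{cut-Euler-P}(d)) are valid and in the spirit of the paper.
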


Let $y=1$ in Theorem~\ref{Integral-Kappa-Modular}. We obtain a
relation between the integral tension polynomial $\tau_{\Bbb
Z}(G,x)$ and the modular tension polynomial $\tau(G,x)$.

\begin{cor}
Let $[\rho]$ denote the cut equivalence class of $\rho$ in
$\mathcal{O}_\AC(G)$. Then
\[
\tau_{\Bbb Z}(G,x)=\sum_{[\rho]\in[\mathcal{O}_\AC(G)]}
\#[\rho]\,\tau_\rho(G,x).
\]
Furthermore, if $\#[\rho]$ is constant for all
$\rho\in\mathcal{O}_\AC(G)$, then $\tau_{\Bbb
Z}(G,x)=\#[\rho]\,\tau(G,x)$.
\end{cor}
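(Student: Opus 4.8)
The plan is to obtain the corollary by specializing the second variable to $1$ in Theorem~\ref{Integral-Kappa-Modular}. Putting $y=1$ in the Integral-Modular Relation (\ref{Int-Mod-Kappa}) gives
\[
\kappa_\mathbbm{z}(G;x,1)=\sum_{[\rho]\in[\mathcal{O}(G)]}\#[\rho]\,\kappa_\rho(G;x,1).
\]
By the specialization (\ref{Int-Kappa-Special-Variable}) the left-hand side equals $\tau_\mathbbm{z}(G,x)$, and by Proposition~\ref{Local-Thm}(d) each summand has $\kappa_\rho(G;x,1)=\tau_\rho(G,x)$. A tension whose values are all positive cannot sum to zero around a directed circuit, so $\tau_\rho$ is the zero polynomial whenever $(G,\rho)$ has a directed circuit; hence only the cut-Eulerian classes admitting an acyclic representative contribute to the sum, exactly as in the passage from (\ref{Int-Kappa-Decom}) to (\ref{Int-Tau-Decom}) in the proof of Theorem~\ref{Integral-Thm}(d).

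Next I would identify those surviving classes with the cut-equivalence classes of $\mathcal{O}_\textsc{ac}(G)$ and check that the multiplicities coincide. If $\rho$ is acyclic then $E(C_\rho)=\emptyset$, so Lemma~\ref{cut-Euler-P}(b) shows that every orientation $\sigma\sim_\textsc{ce}\rho$ is again acyclic; thus the cut-Eulerian class $[\rho]$ computed in $\mathcal{O}(G)$ lies entirely in $\mathcal{O}_\textsc{ac}(G)$. Moreover, with the circuit part empty the defining condition for $\sim_\textsc{ce}$ on this class collapses to the requirement that $E(\rho\neq\sigma)$ be a locally directed cut, i.e.\ to $\sim_\textsc{cu}$. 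Hence the index set $[\mathcal{O}(G)]$ above may be replaced by $[\mathcal{O}_\textsc{ac}(G)]$ with the same multiplicities $\#[\rho]$, yielding
\[
\tau_\mathbbm{z}(G,x)=\sum_{[\rho]\in[\mathcal{O}_\textsc{ac}(G)]}\#[\rho]\,\tau_\rho(G,x),
\]
which is the first assertion. This matching of indexing sets and multiplicities is the only point requiring attention; the remainder is formal.

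For the last sentence, suppose $\#[\rho]$ equals a constant $c$ for all $\rho\in\mathcal{O}_\textsc{ac}(G)$. Factoring $c$ out of the sum and invoking the modular decomposition (\ref{Tau-Decom}), namely $\tau(G,x)=\sum_{[\rho]\in[\mathcal{O}_\textsc{ac}(G)]}\tau_\rho(G,x)$, then gives $\tau_\mathbbm{z}(G,x)=c\,\tau(G,x)=\#[\rho]\,\tau(G,x)$.
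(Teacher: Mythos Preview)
Your proposal is correct and follows exactly the paper's approach, which derives the corollary by setting $y=1$ in Theorem~\ref{Integral-Kappa-Modular}; you simply supply the details the paper leaves implicit. One small quibble: the fact that $\sigma\sim_\textsc{ce}\rho$ with $\rho$ acyclic forces $\sigma$ acyclic is more directly read off Lemma~\ref{Cut-Eulerian-BC}(a) (which gives $E(C_\sigma)=E(C_\rho)=\emptyset$) than Lemma~\ref{cut-Euler-P}(b), but your subsequent sentence about the defining condition collapsing to $\sim_\textsc{cu}$ already contains the correct argument.
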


Let $x=1$ in Theorem~\ref{Integral-Kappa-Modular}. We obtain a
relation between the integral flow polynomial $\varphi_{\Bbb
Z}(G,y)$ and the modular flow polynomial $\varphi(G,y)$, which
answers the question asked by Beck and Zaslavsky
\cite{Beck-Zaslavsky}.

\begin{cor}
Let $[\rho]$ denote the Eulerian equivalence class of $\rho$ in
$\mathcal{O}_\TC(G)$. Then
\[
\varphi_{\Bbb Z}(G,y)=\sum_{[\rho]\in[\mathcal{O}_\TC(G)]}
\#[\rho]\,\varphi_\rho(G,y).
\]
Furthermore, if $\#[\rho]$ is constant for all
$\rho\in\mathcal{O}_\TC(G)$, then $\varphi_{\Bbb
Z}(G,y)=\#[\rho]\,\varphi(G,y)$.
\end{cor}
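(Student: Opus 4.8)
The plan is to deduce Theorem~\ref{Integral-Kappa-Modular} by collapsing the orientation-indexed decomposition formulas of Theorem~\ref{Integral-Thm}(b) onto cut-Eulerian equivalence classes, and then to read off the two corollaries that follow it by specializing one of the variables.

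For the theorem itself, I would start from the decompositions
\[
\kappa_{\mathbbm{z}}(G;x,y)=\sum_{\rho\in\mathcal{O}(G)}\kappa_\rho(G;x,y),\qquad
\bar\kappa_{\mathbbm{z}}(G;x,y)=\sum_{\rho\in\mathcal{O}(G)}\bar\kappa_\rho(G;x,y)
\]
of Theorem~\ref{Integral-Thm}(b). Because $\sim_\textsc{ce}$ is an equivalence relation on $\mathcal{O}(G)$ (Lemma~\ref{cut-Euler-P}(a)), each sum can be rewritten as a double sum over classes $[\rho]\in[\mathcal{O}(G)]$ and over members $\sigma\in[\rho]$. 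The one thing that has to be checked is that the summand is constant along a class: $\kappa_\rho(G;x,y)=\kappa_\sigma(G;x,y)$ for $\rho\sim_\textsc{ce}\sigma$, which is exactly Lemma~\ref{cut-Euler-P}(d), and similarly $\bar\kappa_\rho(G;x,y)=\bar\kappa_\sigma(G;x,y)$ for $\rho\sim_\textsc{ce}\sigma$. The latter follows either directly from the lattice-point preserving bijection $Q^{p,q}_{\rho,\sigma}$ of Lemma~\ref{cut-Euler-P}(c), or, avoiding that, by applying the local reciprocity law (\ref{Local-Reciprocity-Law}) to $\rho$ and to $\sigma$ and using $E(C_\rho)=E(C_\sigma)$ from Lemma~\ref{Cut-Eulerian-BC}(a) together with the $\kappa$-equality just quoted. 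Granting this, the inner sum over $[\rho]$ is $\#[\rho]\,\kappa_\rho(G;x,y)$, resp. $\#[\rho]\,\bar\kappa_\rho(G;x,y)$, which gives (\ref{Int-Mod-Kappa}) and (\ref{Int-Mod-Bar-Kappa}). The final sentence of the theorem is then immediate: if $\#[\rho]$ is a constant $c$ on $\mathcal{O}(G)$, pulling $c$ out of (\ref{Int-Mod-Kappa}) and comparing with the definition $\kappa(G;x,y)=\sum_{[\rho]\in[\mathcal{O}(G)]}\kappa_\rho(G;x,y)$ from (\ref{Mod-Kappa-Decom}) gives $\kappa_{\mathbbm{z}}=c\,\kappa$, and likewise $\bar\kappa_{\mathbbm{z}}=c\,\bar\kappa$ via (\ref{Mod-Bar-Kappa-Decom}).

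For the corollary obtained by setting $x=1$, I would substitute into (\ref{Int-Mod-Kappa}) the specializations $\kappa_{\mathbbm{z}}(G;1,y)=\varphi_{\mathbbm{z}}(G,y)$ from (\ref{Int-Kappa-Special-Variable}) and $\kappa_\rho(G;1,y)=\varphi_\rho(G,y)$ from Proposition~\ref{Local-Thm}(d), obtaining $\varphi_{\mathbbm{z}}(G,y)=\sum_{[\rho]\in[\mathcal{O}(G)]}\#[\rho]\,\varphi_\rho(G,y)$. Now $\varphi_\rho(G,y)$ vanishes identically unless $\rho$ is totally cyclic, and a cut-Eulerian class is entirely inside or entirely outside $\mathcal{O}_\textsc{tc}(G)$ since $E(C_\rho)$ is a cut-Eulerian invariant (Lemma~\ref{Cut-Eulerian-BC}(a)) and $\rho\in\mathcal{O}_\textsc{tc}(G)$ iff $E(C_\rho)=E$; moreover for $\rho\in\mathcal{O}_\textsc{tc}(G)$ one has $E(B_\rho)=\emptyset$, so the cut part of $E(\rho\neq\sigma)$ is empty and $\sim_\textsc{ce}$ restricted to $\mathcal{O}_\textsc{tc}(G)$ is precisely Eulerian equivalence, with the same class cardinalities. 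Hence the surviving terms are indexed by the Eulerian-equivalence classes of $\mathcal{O}_\textsc{tc}(G)$, giving the asserted formula, and the ``furthermore'' clause follows by pulling out the common class size and applying the decomposition $\varphi(G,y)=\sum_{[\rho]\in[\mathcal{O}_\textsc{tc}(G)]}\varphi_\rho(G,y)$ of (\ref{Phi-Decom}). The companion corollary with $y=1$ is the mirror-image argument, with $\mathcal{O}_\textsc{ac}(G)$ and $\tau$ in place of $\mathcal{O}_\textsc{tc}(G)$ and $\varphi$, with (\ref{Tau-Decom}) in place of (\ref{Phi-Decom}), and with $E(C_\rho)=\emptyset$ for acyclic $\rho$.

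There is no serious analytic obstacle here: the entire argument is a regrouping of identities already proved, and the essential inputs are only Theorem~\ref{Integral-Thm}(b) and the cut-Eulerian invariance of $\kappa_\rho$ and $\bar\kappa_\rho$. The spot that needs a little care is that the $\bar\kappa$-invariance is not isolated as a statement in the text and has to be extracted from Lemma~\ref{cut-Euler-P}(c) (or from the reciprocity detour above); and, for the corollaries, the bookkeeping that matches the contributing cut-Eulerian classes of $\mathcal{O}(G)$ with the equivalence classes of $\mathcal{O}_\textsc{ac}(G)$ or $\mathcal{O}_\textsc{tc}(G)$ without altering cardinalities. All quantities involved are polynomials, so verifying the identities for all sufficiently large integer arguments --- which is exactly what the counting-style decomposition formulas deliver --- is enough.
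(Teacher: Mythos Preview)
Your proposal is correct and follows the same approach as the paper: the paper derives this corollary by simply setting $x=1$ in Theorem~\ref{Integral-Kappa-Modular}, and you do exactly that, carefully filling in the bookkeeping (that $\varphi_\rho$ vanishes outside $\mathcal{O}_\textsc{tc}(G)$ and that $\sim_\textsc{ce}$ restricts to Eulerian equivalence there) which the paper leaves implicit. You also supply a proof of Theorem~\ref{Integral-Kappa-Modular} itself, which the paper states without proof; your derivation of it from Theorem~\ref{Integral-Thm}(b) together with the cut-Eulerian invariance of $\kappa_\rho$ and $\bar\kappa_\rho$ is correct and is the intended argument.
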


\section{Connection to the Tutte polynomial}

Recall that {\em Whitney's rank generating polynomial}
\cite[p.337]{Bollobas1} of a graph $G$ is
\begin{equation}
R_G(x,y):=\sum_{X\subseteq E} x^{r(G)-r(G|X)} y^{n(G|X)}.
\label{Rank-Generating-Function}
\end{equation}
Shifting each variable in $R_G$ in one unit defines the {\em Tutte
polynomial}
\begin{equation}\label{Tutte-Definition}
T_G(x,y):=R_G(x-1,y-1),
\end{equation}
which can be also defined by recurrence relations; see
\cite[p.339]{Bollobas1}.

\begin{prop}\label{RPQ}
Let $A,B$ be finite abelian groups of orders $|A|=p$, $|B|=q$, and
$\Omega:=\Omega(G,\varepsilon;A,B)$. Then $R_G(x,y)$ has the
following combinatorial interpretations:
\begin{equation}
R_G(p,q) = \sum_{(f,g)\in\Omega\atop \supp g\subseteq \ker f}
2^{|\ker f-\supp g|}, \label{Rank-Generating-Positive}
\end{equation}
\begin{equation}
R_G(-p,-q) = (-1)^{r(G)} \sum_{(f,g)\in\Omega\atop \supp g = \ker f}
(-1)^{|\supp g|}. \label{Rank-Genrating-Negative}
\end{equation}
\end{prop}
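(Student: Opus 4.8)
\smallskip
\noindent\textbf{Proof proposal.} The plan is to read both identities off the defining formula $(\ref{Rank-Generating-Function})$ for $R_G$ by a single interchange of summation, using only two elementary counts. Fix finite abelian groups $A,B$ with $|A|=p$, $|B|=q$. First I record the two counts: for every edge subset $Z\subseteq E$, the number of $A$-tensions $f$ of $(G,\varepsilon)$ with $Z\subseteq\ker f$ (equivalently $f|_Z=0$) equals $p^{r(G)-r\langle Z\rangle}$, and the number of $B$-flows $g$ with $\supp g\subseteq Z$ (equivalently $g|_{E-Z}=0$) equals $q^{n\langle Z\rangle}$. The first holds because such tensions are exactly the tensions of the contraction $G/Z$ --- precisely the identification corrected in the Remark following Lemma~\ref{Delta-CTF-Decomposition} (the \emph{wrong} identification would be with $G|Z$) --- together with $|T(H;A)|=p^{r(H)}$ for every graph $H$ (the tension group is the quotient of $A^V$ by the functions constant on each component) and $r(G/Z)=r(G)-r\langle Z\rangle$. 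The second holds because such flows are exactly the flows of $G|Z$ and $|F(H;B)|=q^{n(H)}$. In particular both counts depend only on $p$ and $q$, which accounts for the claimed group-independence.

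For $(\ref{Rank-Generating-Positive})$ I would write $2^{|\ker f-\supp g|}$ as the number of sets $Z$ with $\supp g\subseteq Z\subseteq\ker f$ and swap the order of summation:
\begin{align*}
\sum_{(f,g)\in\Omega\atop \supp g\subseteq\ker f}2^{|\ker f-\supp g|}
&=\#\{(f,g,Z)\:|\:(f,g)\in\Omega,\ \supp g\subseteq Z\subseteq\ker f\}\\
&=\sum_{Z\subseteq E}p^{r(G)-r\langle Z\rangle}\,q^{n\langle Z\rangle}.
\end{align*}
In the last equality, for a fixed $Z$ the constraint ``$\supp g\subseteq Z\subseteq\ker f$'' splits into the two independent constraints $Z\subseteq\ker f$ and $\supp g\subseteq Z$ (and then $\supp g\subseteq\ker f$ is automatic), so the number of admissible $(f,g)$ is the product of the two counts above. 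Since $r(G)=r\langle E\rangle$, the right-hand side is $R_G(p,q)$ by $(\ref{Rank-Generating-Function})$.

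For $(\ref{Rank-Genrating-Negative})$ I would first pull the sign out of $(\ref{Rank-Generating-Function})$: because $n\langle X\rangle=|X|-r\langle X\rangle$, the exponent of $-1$ in $(-p)^{r\langle E\rangle-r\langle X\rangle}(-q)^{n\langle X\rangle}$ is $r(G)+|X|-2\,r\langle X\rangle\equiv r(G)+|X|\pmod 2$, so
\[
R_G(-p,-q)=(-1)^{r(G)}\sum_{X\subseteq E}(-1)^{|X|}\,p^{r(G)-r\langle X\rangle}\,q^{n\langle X\rangle}.
\]
Then I would interpret $p^{r(G)-r\langle X\rangle}q^{n\langle X\rangle}$ as counting the $(f,g)\in\Omega$ with $\supp g\subseteq X\subseteq\ker f$, interchange summation, and evaluate the inner sign sum: for each $(f,g)$ with $\supp g\subseteq\ker f$,
\[
\sum_{\supp g\subseteq X\subseteq\ker f}(-1)^{|X|}=(-1)^{|\supp g|}\sum_{Y\subseteq\ker f-\supp g}(-1)^{|Y|},
\]
which is $0$ unless $\ker f=\supp g$ and equals $(-1)^{|\supp g|}$ in that case. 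Hence $\sum_{X\subseteq E}(-1)^{|X|}p^{r(G)-r\langle X\rangle}q^{n\langle X\rangle}=\sum_{(f,g)\in\Omega,\ \supp g=\ker f}(-1)^{|\supp g|}$, and multiplying by $(-1)^{r(G)}$ gives $(\ref{Rank-Genrating-Negative})$.

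The whole argument is bookkeeping, so I expect no real obstacle. The two spots that demand a little care are the identification used in the first local count (tensions of $G$ vanishing on $Z$ correspond to tensions of $G/Z$, not $G|Z$ --- exactly the subtlety flagged in the Remark after Lemma~\ref{Delta-CTF-Decomposition}) and the parity reduction $r(G)+|X|-2\,r\langle X\rangle\equiv r(G)+|X|$ in the negative case; both are immediate.
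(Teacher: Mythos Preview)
Your proposal is correct and follows essentially the same route as the paper's own proof: interpret $p^{r(G)-r\langle X\rangle}q^{n\langle X\rangle}$ as the number of pairs $(f,g)\in\Omega$ with $\supp g\subseteq X\subseteq\ker f$, then interchange the order of summation over $X$ and over $(f,g)$; the inner sum becomes $2^{|\ker f-\supp g|}$ in the positive case and collapses by the binomial identity in the negative case. Your write-up is in fact slightly more explicit than the paper's in justifying the two local counts via $G/Z$ and $G|Z$, but the argument is the same.
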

\begin{proof}
For subsets $X,Y\subseteq E$, let
$\Omega_{X,Y}=\Omega_{X,Y}(G,\varepsilon;A,B)$ denote the set of
tension-flows $(f,g)\in\Omega$ such that $f|_X=0$, $g|_Y=0$. Then
\[
|\Omega_{X,X^c}| = p^{r(G)-r(G|X)} q^{n(G|X)}.
\]
Thus
\begin{eqnarray*}
R_G(p,q) &=& \sum_{X\subseteq E} |\Omega_{X,X^c}(G,\varepsilon;A,B)| \\
&=& \sum_{X\subseteq E} \sum_{(f,g)\in\Omega\atop \supp g\subseteq
X \subseteq \ker f} 1 \\
&=&  \sum_{(f,g)\in\Omega\atop \supp g\subseteq \ker f} \sum_{\supp
g\subseteq
X\subseteq \ker f\atop} 1 \\
&=& \sum_{(f,g)\in\Omega\atop \supp g\subseteq \ker f} 2^{|\ker
f-\supp g|}.
\end{eqnarray*}

Replace $x$ by $-p$ and $y$ by $-q$ in
(\ref{Rank-Generating-Function}) and note that $|X|=r(G|X)+n(G|X)$
for $X\subseteq E$. We obtain
\begin{eqnarray*}
R_G(-p,-q) &=& (-1)^{r(G)}\sum_{X\subseteq E} (-1)^{|X|} p^{r(G)-r(G|X)} q^{n(G|X)}\\
&=& (-1)^{r(G)} \sum_{X\subseteq E} (-1)^{|X|}
\sum_{(f,g)\in\Omega\atop
\supp g\subseteq X\subseteq \ker f} 1 \\
&=& (-1)^{r(G))} \sum_{(f,g)\in\Omega\atop \supp g\subseteq \ker f}
\sum_{\supp g\subseteq X\subseteq \ker f\atop} (-1)^{|X|}.
\end{eqnarray*}
Using the Binomial Theorem, it is easy to see that
\[
\sum_{\supp g\subseteq X\subseteq \ker f\atop}
(-1)^{|X|}=(-1)^{|\ker f|}\delta_{\supp g,\ker f},
\]
where $\delta_{X,Y}=1$ if $X=Y$ and $\delta_{X,Y}=0$ otherwise.
Therefore we have
\begin{eqnarray*}
R_G(-p,-q) &=& (-1)^{r(G)} \sum_{(f,g)\in\Omega\atop \supp
g\subseteq \ker f}
(-1)^{|\ker f|}\delta_{\supp g,\ker f}\\
 &=& (-1)^{r(G)} \sum_{(f,g)\in\Omega\atop \supp
g = \ker f} (-1)^{|\supp g|}.
\end{eqnarray*}
\end{proof}

Proposition~\ref{RPQ} is due to Reiner \cite{Reiner1}, where
(\ref{Rank-Generating-Positive}) is obtained indirectly for $p,q$ to
be prime powers. 
The following Corollary~\ref{Breuer-Sanyal} is due to Breuer and
Sanyal \cite{Breuer-Sanyal}, obtained by deletion-contraction
method. Its current form is slightly succinct than Breuer and
Sanyal's original statement.

\begin{cor}[Main Result of \cite{Breuer-Sanyal}]\label{Breuer-Sanyal} For positive integers
$p,q$, $T_G(p+1,q+1)$ $(=R_G(p,q))$ equals the number of triples
$(f,g,\rho)$, where $f$ is a ${\Bbb Z}_p$-tension of
$(G,\varepsilon)$ and $g$ is a ${\Bbb Z}_q$-flow of
$(G,\varepsilon)$ such that $\supp g\subseteq\ker f$, and $\rho$ is
a reorientation on the edge subset $\ker f-\supp g$.
\end{cor}
\begin{proof}
Take $A={\Bbb Z}_p$ and $B={\Bbb Z}_q$ in Proposition~\ref{RPQ};
clearly, $|A|=p$ and $|B|=q$. For each tension-flow
$(f,g)\in\Omega(G,\varepsilon;A,B)$ such that $\supp g\subseteq \ker
f$, there are exactly $2^{|\ker f-\supp g|}$ reorientations on the
edge subset $\ker f-\supp g$, since each edge has two choices to be
reoriented. So (\ref{Rank-Generating-Positive}) reduces to the
counting interpretation.
\end{proof}

\begin{thm}\label{RG}
The rank generating polynomial $R_G$ has the following combinatorial
interpretation:
\begin{equation}
R_G(x,y) =\sum_{[\rho]\in[\mathcal{O}(G)]} \bar\kappa_\rho(G;x,y),
\label{Rank-Generating-Interpretation-1}
\end{equation}
\begin{equation}
R_G(-x,-y) = (-1)^{r(G)}\sum_{[\rho]\in[\mathcal{O}(G)]}
(-1)^{|E(C_\rho)|}\,\kappa_\rho(G;x,y).
\label{Rank-Genrating-Negative-Kappa}
\end{equation}
\end{thm}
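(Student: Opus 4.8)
The strategy is to reduce Theorem~\ref{RG} to a comparison of two convolution formulas: the convolution formula (\ref{Conv-Formula-Bar-Kappa}) for $\bar\kappa$, already proved in Theorem~\ref{Modular-Thm}(e), and the Kook--Reiner--Stanton convolution identity (\ref{Conv-Formula-Tutte-M}) for the Tutte polynomial. The one new ingredient needed to link them is the identification of the one-variable dual polynomials $\bar\tau$ and $\bar\varphi$ with the two ``boundary'' specializations of $T$ that appear in (\ref{Conv-Formula-Tutte-M}).

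First I would record that, for every graph $H$,
\[
\bar\tau(H,x)=T_H(x+1,0),\qquad \bar\varphi(H,y)=T_H(0,y+1).
\]
These follow from the classical interpretations (\ref{Tau-Tutte}) and (\ref{Phi-Tutte}) together with the modular reciprocity laws $\tau(H,-x)=(-1)^{r(H)}\bar\tau(H,x)$ and $\varphi(H,-y)=(-1)^{n(H)}\bar\varphi(H,y)$ of \cite{Chen-I,Chen-II}, since then $\bar\tau(H,x)=(-1)^{r(H)}\tau(H,-x)=(-1)^{r(H)}(-1)^{r(H)}T_H(x+1,0)=T_H(x+1,0)$, and symmetrically for $\bar\varphi$. (These reciprocity laws can also be recovered inside the present paper by specializing $y=-1$, resp.\ $x=-1$, in (\ref{Mod-Kappa-Reciprocity}) and using (\ref{Mod-Kappa-Special-Variable}) together with the decompositions (\ref{Bar-Tau-Decom}), (\ref{Bar-Phi-Decom}), in the same manner as the proof of Theorem~\ref{Modular-Thm}(d).)

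Substituting these into the convolution formula (\ref{Conv-Formula-Bar-Kappa}) gives
\[
\bar\kappa(G;x,y)=\sum_{X\subseteq E}\bar\tau(G/X,x)\,\bar\varphi(G|X,y)=\sum_{X\subseteq E}T_{G/X}(x+1,0)\,T_{G|X}(0,y+1),
\]
and the right-hand side is precisely the Kook--Reiner--Stanton expansion (\ref{Conv-Formula-Tutte-M}) of $T_G$ evaluated at the point $(x+1,y+1)$, hence equals $T_G(x+1,y+1)=R_G(x,y)$ by the definition (\ref{Tutte-Definition}). Thus $\bar\kappa(G;x,y)=R_G(x,y)$, and combining this with the decomposition formula (\ref{Mod-Bar-Kappa-Decom}) yields (\ref{Rank-Generating-Interpretation-1}). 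For (\ref{Rank-Genrating-Negative-Kappa}) it then suffices to feed the identity just obtained, with $(x,y)$ replaced by $(-x,-y)$, into the reciprocity law (\ref{Mod-Bar-Kappa-Reciprocity}):
\[
R_G(-x,-y)=\bar\kappa(G;-x,-y)=\sum_{[\rho]\in[\mathcal{O}(G)]}(-1)^{r(G)+|E(C_\rho)|}\kappa_\rho(G;x,y),
\]
which is (\ref{Rank-Genrating-Negative-Kappa}) after factoring out $(-1)^{r(G)}$.

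I expect the only place where any care is needed is the sign bookkeeping in the second step: one must check that the factor $(-1)^{r(H)}$ in (\ref{Tau-Tutte}) is cancelled exactly by the reciprocity sign, and that the variable shift $x\mapsto x+1$, $y\mapsto y+1$ turns (\ref{Conv-Formula-Bar-Kappa}) into (\ref{Conv-Formula-Tutte-M}) term by term; everything else is assembled from results already in hand. If one prefers an argument that does not invoke (\ref{Conv-Formula-Tutte-M}), one can instead verify the purely combinatorial convolution identity $R_G(x,y)=\sum_{X\subseteq E}R_{G/X}(x,-1)\,R_{G|X}(-1,y)$ directly from the Whitney rank expansion (\ref{Rank-Generating-Function}) by a routine inclusion--exclusion over the nested subsets $W\subseteq X\subseteq E$; this is longer but self-contained, and changes nothing in the rest of the argument.
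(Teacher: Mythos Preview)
Your proof is correct but follows a genuinely different route from the paper's. The paper establishes (\ref{Rank-Genrating-Negative-Kappa}) first, by a direct counting argument: it starts from Reiner's formula (\ref{Rank-Genrating-Negative}) in Proposition~\ref{RPQ}, which writes $R_G(-p,-q)$ as a signed sum over modular complementary tension-flows, then decomposes $K(G,\varepsilon;{\Bbb Z}_p,{\Bbb Z}_q)$ via (\ref{Modular-Decom-rho}), and observes that on the piece indexed by $[\rho]$ every $(f,g)$ has $\supp g=E(C_\rho)$, so the sign $(-1)^{|\supp g|}$ becomes $(-1)^{|E(C_\rho)|}$; identity (\ref{Kappa-Z-rho}) then gives (\ref{Rank-Genrating-Negative-Kappa}), and (\ref{Rank-Generating-Interpretation-1}) follows from the local reciprocity (\ref{Local-Reciprocity-Law}). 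You instead prove (\ref{Rank-Generating-Interpretation-1}) first, by matching the convolution (\ref{Conv-Formula-Bar-Kappa}) with the Kook--Reiner--Stanton identity (\ref{Conv-Formula-Tutte-M}) through the specializations $\bar\tau(H,x)=T_H(x+1,0)$ and $\bar\varphi(H,y)=T_H(0,y+1)$, and then deduce (\ref{Rank-Genrating-Negative-Kappa}) from (\ref{Mod-Bar-Kappa-Reciprocity}). Your approach is slicker and makes the theorem appear as the natural two-variable common generalization of the one-variable reciprocities for $\tau$ and $\varphi$; the paper's approach is self-contained (it does not invoke (\ref{Conv-Formula-Tutte-M})) and makes the combinatorial origin of the sign $(-1)^{|E(C_\rho)|}$ transparent as the support-size sign in Reiner's formula.
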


{\sc First Proof.} Note that a real-valued tension-flow $(f,g)$ of
$(G,\varepsilon)$ is complementary if and only if $\supp g=\ker f$.
Thus
\[
K(G,\varepsilon;{\Bbb Z}_p,{\Bbb Z}_q)
=\{(f,g)\in\Omega(G,\varepsilon;{\Bbb Z}_p,{\Bbb Z}_q):\supp g=\ker
f\}
\]
and (\ref{Rank-Genrating-Negative}) becomes
\[
R_G(-p,-q)=(-1)^{r(G)} \sum_{(f,g)\in K(G,\varepsilon;{\Bbb
Z}_p,{\Bbb Z}_q)} (-1)^{|\supp g|}.
\]
Applying the disjoint decomposition (\ref{Modular-Decom-rho}), we
obtain
\begin{align*}
R_G(-p,-q) &= \sum_{[\rho]\in[\mathcal{O}(G)]} (-1)^{r(G)}
\sum_{(f,g)\in K_\rho(G,\varepsilon;{\Bbb Z}_p,{\Bbb Z}_q)}
(-1)^{|\supp g|}.
\end{align*}
where $K_\rho(G,\varepsilon;{\Bbb Z}_p,{\Bbb
Z}_q):=\Mod_{p,q}\big((p,q)\Delta^{\rho}_\CTF(G,\varepsilon)\cap({\Bbb
Z}\times{\Bbb Z})^E\big)$. For each element $(f,g)$ of
$K_\rho(G,\varepsilon;{\Bbb Z}_p,{\Bbb Z}_q)$, let $(\tilde f,\tilde
g)$ be an element of $(p,q)\Delta^\rho_\CTF(G,\varepsilon)\cap({\Bbb
Z}\times{\Bbb Z})^E$ such that $\Mod_{p,q}(\tilde f,\tilde g)=(f,g)$
by Lemma~\ref{Surjective}. Then $P_{\rho,\varepsilon}(\tilde
f,\tilde g)\in \Delta^+_\CTF(G,\rho)$. Since
$P_{\rho,\varepsilon}(\tilde f+\tilde g) =[\rho,\varepsilon](\tilde
f+\tilde g)>0$, it is clear that $\supp[\rho,\varepsilon]\tilde
g=E(C_\rho)$. Since $\supp[\rho,\varepsilon]\tilde g=\supp\tilde
g=\supp g$, then $\supp g=E(C_\rho)$. Thus
\[
R_G(-p,-q) =(-1)^{r(G)} \sum_{[\rho]\in[\mathcal{O}(G)]}
(-1)^{|E(C_\rho)|} \# K_\rho(G,\varepsilon;{\Bbb Z}_p,{\Bbb Z}_q).
\]
Apply (\ref{Kappa-Z-rho}); we obtain
(\ref{Rank-Genrating-Negative-Kappa}) immediately. The formula
(\ref{Rank-Generating-Interpretation-1}) follows from the
Reciprocity Law (\ref{Local-Reciprocity-Law}) on $\kappa_\rho$ and
$\bar\kappa_\rho$.

{\sc Second Proof (and the Proof of
Theorem~\ref{Rank-Generating-Interpretation} and
Corollary~\ref{Tutte-Interpretation}).} Recall the convolution
formula (\ref{Conv-Formula-Tutte-M}) and the formulas
(\ref{Tau-Tutte}) and (\ref{Phi-Tutte}); we have
\begin{eqnarray*}
T_G(x,y) &=& \sum_{X\subseteq E(G)} T_{G/X}(x,0)\, T_{G|X}(0,y) \\
&=& \sum_{X\subseteq E(G)} (-1)^{r(G/X)+n(G|X)} \tau(G/X,1-x)\,
\varphi(G|X,1-y).
\end{eqnarray*}
Apply the Reciprocity Laws about $\tau,\bar\tau$ and
$\varphi,\bar\varphi$ and their Decomposition Formulas in
\cite{Chen-I,Chen-II}, i.e.,
\[
\bar\tau(G,-x)=(-1)^{r(G)}\tau(G,x), \sp
\bar\varphi(G,-y)=(-1)^{n(G)}\varphi(G,y);
\]
\[
\bar\tau(G,x)=\sum_{\rho\in[{\mathcal O}_\AC(G)]}
\bar\tau_\rho(G,x), \sp \bar\varphi(G,y)=\sum_{\rho\in[{\mathcal
O}_\TC(G)]} \bar\varphi_\rho(G,y).
\]
We then obtain
\begin{eqnarray*}
T_G(x,y) &=& \sum_{X\subseteq E(G)} \bar\tau(G/X,x-1)\,
\bar\varphi(G|X,y-1) \\
&=& \sum_{X\subseteq E(G)} \sum_{[\rho_1]\in[{\mathcal
O}_\AC(G/X)]_\CU\atop [\rho_2]\in[{\mathcal O}_\TC(G|X)]_\EU}
\bar\tau_{\rho_1}(G/X,x-1)\,\bar\varphi_{\rho_2}(G|X,y-1).
\end{eqnarray*}

For each fixed edge subset $X\subseteq E(G)$, let us identify the
orientations on $G|X, G/X$ to orientations on $X, E-X$ respectively.
There is an obvious bijection ${\mathcal O}(G/X)\times{\mathcal
O}(G|X)\rightarrow {\mathcal O}(G)$,
$(\rho_1,\rho_2)\mapsto\rho_1\vee\rho_2$, where $\rho_1\vee\rho_2$
is the orientation on $G$ whose restriction to $X$ is $\rho_2$ and
restriction to $E-X$ is $\rho_1$. Moreover, ${\mathcal O}_\TC(G|X)$
and ${\mathcal O}_\AC(G/X)$ are nonempty if and only if there are no
bridges in $G|X$ and no loops in $G/X$. If $(\rho_1,\rho_2)\in
{\mathcal O}_\AC(G/X)\times{\mathcal O}_\TC(G|X)$, then
\[
C_{\rho_1\vee\rho_2}(G)=X, \sp B_{\rho_1\vee\rho_2}(G)=E(G)-X;
\]
conversely, if $\rho\in{\mathcal O}(G)$, then
$\rho=\rho_1\vee\rho_2$, where $\rho_1=\rho_{/X}$,
$\rho_2=\rho_{|X}$, and $X=C_\rho(G)$; and in both cases
\[
\bar\tau_{\rho_1}(G/X,x)\,\bar\varphi_{\rho_2}(G|X,y)=\bar\kappa_{\rho_1\vee\rho_2}(G;x,y).
\]
We thus have the disjoint unions
\[
{\mathcal O}(G)=\bigsqcup_{X\subseteq E(G)} \big\{\rho_1\vee\rho_2 :
\rho_1\in{\mathcal O}_\AC(G/X),\, \rho_2\in{\mathcal
O}_\TC(G|X)\big\},
\]
\[
[{\mathcal O}(G)]_\CE = \bigsqcup_{X\subseteq E(G)}
\big\{[\rho_1\vee\rho_2] : [\rho_1]\in[{\mathcal O}_\AC(G/X)]_\CU,\,
[\rho_2]\in[{\mathcal O}_\TC(G|X)]_\EU\big\},
\]
where the terms on the right-hand sides may be empty for some $X$.
Therefore
\begin{eqnarray*}
T_G(x,y) &=& \sum_{X\subseteq E(G)\atop {[\rho_1]\in[{\mathcal
O}_\AC(G/X)]_\CU\atop [\rho_2]\in[{\mathcal O}_\TC(G|X)]_\EU}}
\bar\kappa_{\rho_1\vee\rho_2}(G;x-1,y-1)\\
&=& \sum_{[\rho]\in[{\mathcal O}(G)]_\CE}
\bar\kappa_\rho(G;x-1,y-1),
\end{eqnarray*}
which is equivalent to $R_G(x,y)=\bar\kappa(G;x,y)$ by
$R_G(x,y)=T_G(x+1,y+1)$. This is exactly
(\ref{Rank-Generating-Interpretation-1}) and
Theorem~\ref{Rank-Generating-Interpretation}.

Corollary~\ref{Tutte-Interpretation} follows from the fact that a
nonnegative, integer-valued, $(p,q)$-tension-flow $(f,g)$ of a
digraph $(G,\rho)$ is an integer-valued tension-flow such that
$0\leq f(e)\leq p-1$, $0\leq g(e)\leq q-1$ for all $e\in E$.
\hfill{$\Box$}

\section{Example}

Let us consider the graph $G$ in Figure~\ref{8-Form}. Its Tutte
polynomial is given by deletion-contraction as
\[
T(x,y)=y^3+x^2+2xy+2y^2+x+y.
\]
Let $(x_i)$ and $(y_i)$ ($1\leq i\leq 5$) denote respectively
tensions and flows of $G$ with the orientation given in
Figure~\ref{8-Form}. The complementary tension-flows $(x_i,y_i)$
satisfy the following system of linear equations and inequalities:
\[
\begin{array}{r}
x_1-x_2-x_3=0,\\
x_2-x_4=0,\\
x_3-x_5=0,
\end{array}
\begin{array}{r}
y_2-y_3+y_4-y_5=0,\\
y_1+y_2+y_4=0,\\
y_1+y_3+y_5=0,
\end{array}
\begin{array}{r}
x_iy_i=0,\\
x_i+y_i\neq 0,\\
1\leq i\leq 5.
\end{array}
\]

\begin{figure}[h]
\centering
\includegraphics[width=30mm]{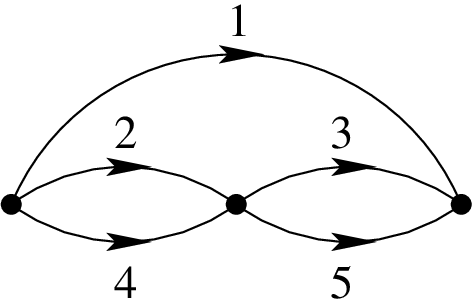}
\caption{The edge-labeled graph $G$.\label{8-Form}}
\end{figure}
When $(x_i)$ is over an abelian group $A$ and $(y_i)$ over another
abelian group $B$, the conditions $x_iy_i=0$ and $x_i+y_i\neq 0$ are
understood as $\supp(x_i)=\ker(y_i)$. Let $|A|=p$ and $|B|=q$; or
set $|x_i|<p$ and $|y_i|<q$ over $\Bbb Z$. By counting the number of
solutions of the above system, we obtain the complementary
polynomial $\kappa$ and the integral complementary polynomial
$\kappa_{\Bbb Z}$ of $G$ as follows:
\[
\kappa(p,q) =(p-1)(p-2)+2(p-1)(q-1)+(q-1)(q-2)^2;
\]
\begin{align*}
\kappa_{\Bbb Z}(p,q) =&3(p-1)(p-2)+8(p-1)(q-1)\\
&+2(q-1)(q-3)(2q-3)+\frac{(q-1)q(2q-1)}{3}.
\end{align*}

There are 8 $(=T(1,1))$ cut-Eulerian equivalence classes of
orientations, 2 ($=T(1,0)$) cut equivalence classes of acyclic
orientations, and 4 ($=T(0,1)$) Eulerian equivalence classes of
totally cyclic orientations. There are 24 ($=T(1,2)$) cut
equivalence classes of orientations (2, 4, 18 in Figures~2, 3, 4,
respectively). There are 14 ($=T(2,1)$) Eulerian equivalence classes
of orientations (6, 4, 4 in Figures~2, 3, 4, respectively). There
are 32 ($=T(2,2)$) total number of orientations. However, there are
essentially only 4 different local complementary polynomials:
\[
\kappa_1(p,q)=\frac{(p-1)(p-2)}{2},
\]
\[
\kappa_2(p,q)=(p-1)(q-1),
\]
\[
\kappa_3(p,q)=-(q-1)^2+\frac{q^2(q-1)}{2} -\frac{(q-1)q(2q-1)}{6},
\]
\[
\kappa_4(p,q)=-(q-1)^2 +\frac{(q-1)q(2q-1)}{6}.
\]
The dual complementary (also rank generating) polynomial
$\bar\kappa$ and the dual integral complementary polynomial
$\bar\kappa_{\Bbb Z}$ are subsequently obtained as follows:
\begin{align*}
\bar\kappa(p,q) &=2[\kappa_1(-p,-q) +\kappa_2(-p,-q)-\kappa_3(-p,-q)
-\kappa_4(-p,-q)] \\
&=(p+1)(p+2)+2(p+1)(q+1)+4(q+1)^2+q^2(q+1)\\
&= q^3+p^2+2pq+5q^2+5p+10q+8 \\
&=T(p+1,q+1);
\end{align*}
\begin{align*}
\bar\kappa_{\Bbb Z}(p,q) &=6\kappa_1(-p,-q)
+8\kappa_2(-p,-q)-8\kappa_3(-p,-q)
-10\kappa_4(-p,-q) \\
&=3(p+1)(p+2)+8(p+1)(q+1)\\
&\hspace{2ex}+18(q+1)^2+4q^2(q+1) +\frac{q(q+1)(2q+1)}{3}.
\end{align*}
Other special values of the polynomials $\kappa$, $\bar\kappa$,
$\kappa_{\Bbb Z}$, and $\bar\kappa_{\Bbb Z}$ are
\[
\kappa(2,1)=T(-1,0)=\#[\mathcal{O}_\CU]=0,
\]
\[
\kappa(1,2)=T(0,-1)=\#[\mathcal{O}_\EU]=0,
\]
\[
\kappa(2,2)=\#[\mathcal{O}_\CE]=0.
\]
The zeroness of these numbers mean that $G$ is {\em not} a cut
(equivalently not bipartite), {\em not} an Eulerian graph, and {\em
not} an edge-disjoint union of a cut and an Eulerian subgraph. The
eight cut-Eulerian equivalence classes of orientations are exhibited
as follows:\vspace{-2ex}

\begin{figure}[h]
\centering \includegraphics[width=58mm]{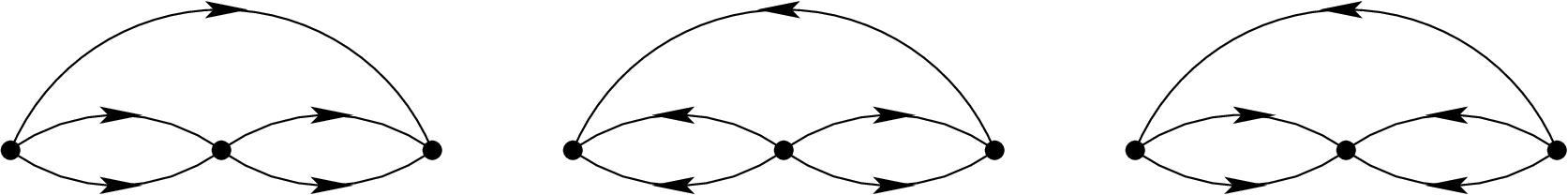}\\
\vspace{2ex}
\includegraphics[width=58mm]{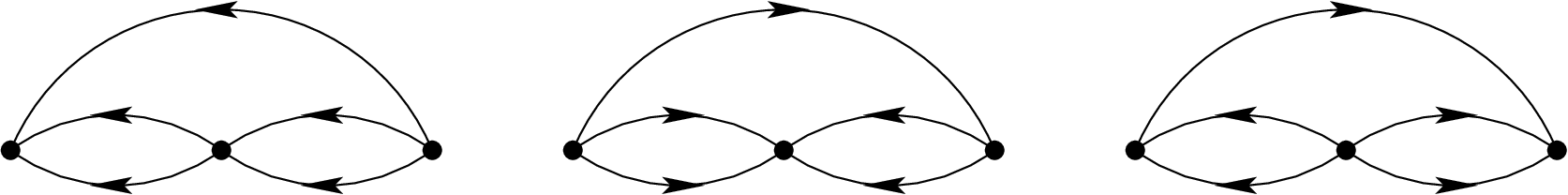} \caption{Two ($=T(1,0)$)
cut equivalence classes of acyclic orientations have the same local
complementary polynomial $\kappa_1$. \label{}}
\end{figure}\vspace{-4ex}

\begin{figure}[h]
\centering
\includegraphics[width=79mm]{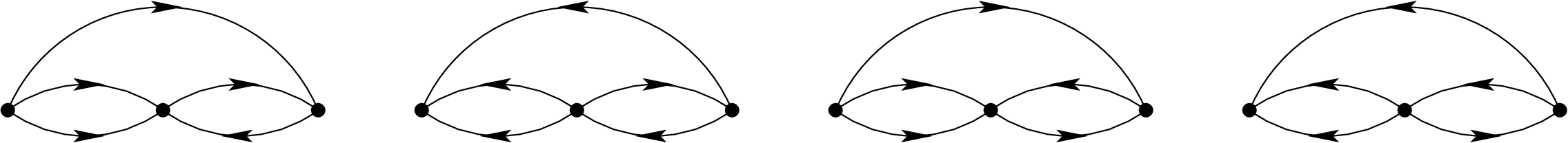}\\
\vspace{2ex}
\includegraphics[width=79mm]{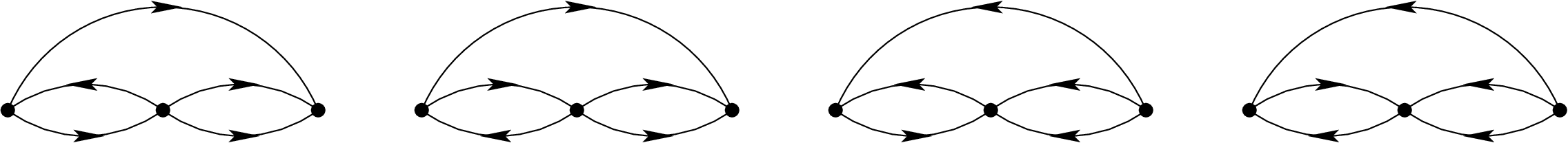} \caption{Two
($=T(1,1)-T(1,0)-T(0,1)$) cut-Eulerian equivalence classes of cyclic
but not totally cyclic orientations have the same local
complementary polynomial $\kappa_2$. \label{Mix}}
\end{figure}\vspace{-5ex}

\begin{figure}[h]
\centering
\includegraphics[width=79mm]{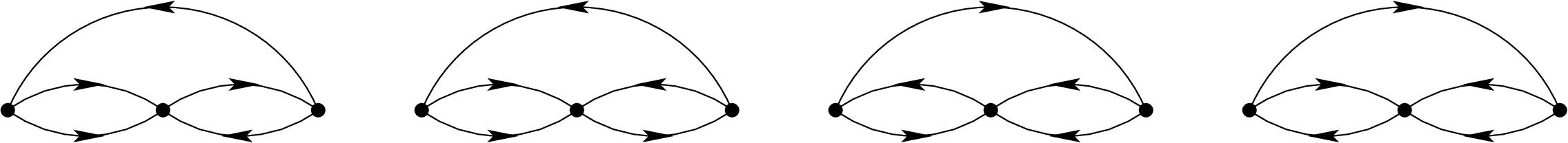}\\
\vspace{2ex}
\includegraphics[width=79mm]{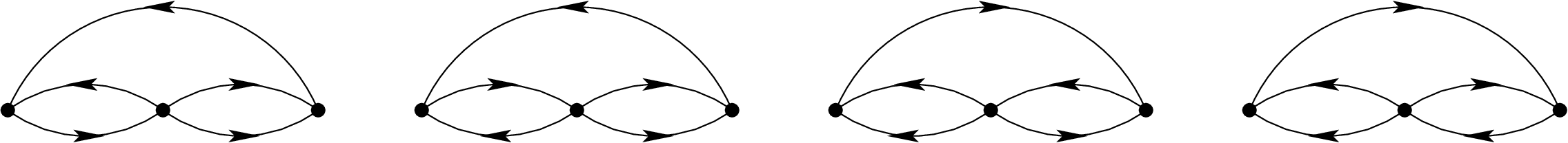}\\
\vspace{2ex}
\includegraphics[width=100mm]{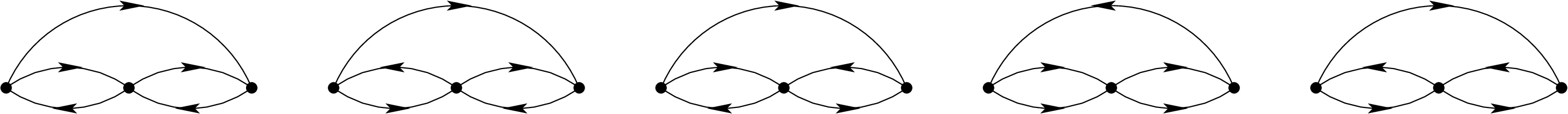}\\
\vspace{2ex}
\includegraphics[width=100mm]{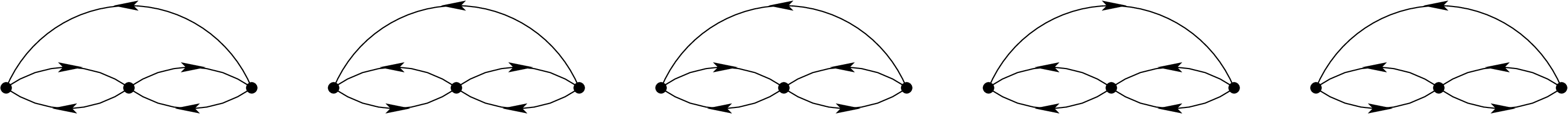}
\caption{Four ($=T(0,1)$) Eulerian equivalence classes of totally
cyclic orientations; the first two and the last two classes have the
same local complementary polynomial $\kappa_3$ and $\kappa_4$,
respectively. \label{}}
\end{figure}

\pagebreak

\begin{center}
{\bf Acknowledgement}
\end{center}

The author thanks Arthur~L.\,B.~Yang for referring the paper
\cite{Kook-Reiner-Stanton} about the Convolution Formula on the
Tutte polynomial and some useful discussions during his visit in May
2007, which leads to the second proof of
Theorem~\ref{Rank-Generating-Interpretation}.

\bibliographystyle{amsalpha}

\end{document}